\newcommand{\Z}{\ensuremath{\mathbb{Z}}}
\newcommand{\A}{\ensuremath{\mathbb{A}}}
\newcommand{\N}{\ensuremath{\mathbb{N}}}
\newcommand{\ra}{\ensuremath{\rightarrow}}
\newcommand{\coker}{\ensuremath{\textnormal{coker}}}
\newcommand{\lm}{\ensuremath{\textnormal{lm}}}
\newcommand{\lt}{\ensuremath{\textnormal{lt}}}
\newcommand{\wt}{\ensuremath{\textnormal{wt}}}
\newcommand{\ord}{\ensuremath{\textnormal{ord}}}
\DeclareMathOperator{\HP}{HP}
\newcommand{\pe}{\ensuremath{\mathfrak{p}}}
\DeclareMathOperator{\Spec}{Spec}
\DeclareMathOperator{\Spf}{Spf}
\DeclareMathOperator{\Hom}{Hom}
\newcommand{\Hh}{\ensuremath{\mathbb{H}}}
\newtheorem{Thm}{Theorem}[section]
\newtheorem*{Thm*}{Theorem}
\newtheorem{Def}[Thm]{Definition}
\newtheorem{Prop}[Thm]{Proposition}
\newtheorem*{Prop*}{Proposition}
\newtheorem{Lem}[Thm]{Lemma}
\newtheorem*{Lem*}{Lemma}
\newtheorem{Cor}[Thm]{Corollary}
\newtheorem*{Cor*}{Corollary}
\newtheorem{Conj*}{Conjecture}
\theoremstyle{remark}
\newtheorem*{Rem}{Remark}
\newtheorem*{Ex*}{Example}
\title{Arc Spaces and Rogers-Ramanujan Identities}
\author[Clemens Bruschek]{Clemens Bruschek$^*$}
\address{University of Vienna, Nordbergstr. 15, 1090 Vienna, Austria}
\email{Clemens.Bruschek@univie.ac.at}
\thanks{$^*$FWF project P21461 and I382}
\author{Hussein Mourtada}
\address{Universit\'{e} de Versailles Saint-Quentin-en-Yvelines, 45
  Avenue des \'{E}tats-Unis, 78035, Cedex Versailles}
\email{Hussein.Mourtada@math.uvsq.fr}
\thanks{}
\author[Jan Schepers]{Jan Schepers$^{\dagger}$}
\address{K.U.Leuven, Celestijnenlaan 200B, 3001 Leuven, Belgium}
\email{janschepers1@gmail.com}
\thanks{$^{\dagger}$Postdoctoral Fellow of the Research Foundation - Flanders (FWO)\\ \indent \textit{MSC2010 Subject Classification:} 14B05, 11P84, 05A17, 13P10.}
\begin{document}


\begin{abstract}
  Arc spaces have been introduced in algebraic geometry as a tool to study
  singularities but they show strong connections with combinatorics as
  well. Exploiting these relations we obtain a new approach to the
  classical Rogers-Ramanujan Identities. The linking object is the
  Hilbert-Poincar\'{e} series of the arc space over a point of the
  base variety. In the case of the double point this is precisely the
  generating series for the integer partitions without equal or
  consecutive parts.
\end{abstract}

\maketitle

\normalsize
\section{Introduction}\label{sec:section_1}

Arc spaces describe formal power series solutions (in one variable) to 
polynomial equations. They first appeared in the work of Nash (published later as \cite{nash}), who investigated their relation to some intrinsic data of a resolution of singularities of a fixed algebraic variety. He asked whether there is a bijection between the irreducible components of the arc space based at the singular locus and the set of essential divisors. While this so-called `Nash problem' is still actively studied (see for instance the recent papers \cite{monique_ana,plenat_spivakovsky,pe,fernandez} and the overview \cite{ishii_intro}), in the last decade arc spaces have gained much interest from algebraic geometers, through their role in motivic integration and their utility in birational geometry.
Arc spaces show strong relations with combinatorics as well. In the present text we
indicate how to exploit this connection both for geometric as well as
combinatorial benefit. In particular, we expose a surprising connection with the well-known
Rogers-Ramanujan identities.\\

Let us emphasize the main algebraic and combinatorial aspects
presented here. First, we suggest to study local algebro-geometric
properties of algebraic (or analytic) varieties via natural
Hilbert-Poincar\'{e} series attached to arc spaces. In contrast to
already existing such series this one is sensitive to the
non-reduced structure of the arc space. Second, we propose
to derive identities between partitions by looking at suitable ideals
in a polynomial ring in countably many variables endowed with a
natural grading. Connecting both ideas will demand handling
Gr\"{o}bner basis in countably many variables, a problem which has
been successfully dealt with in different contexts over the last
years (see \cite{hillar_sullivant,draisma}). In the present situation -- that is for very specific ideals --
salvation from the natural obstruction of being infinitely generated
comes in the shape of a derivation making the respective ideals
differential.\\

We briefly indicate the connection between arc spaces and
partitions. Let $f\in k[x_1,\ldots, x_n]$ be a polynomial in $n$ variables $x_1,\ldots, x_n$
with coefficients in a field $k$. We denote the formal power series ring in one
variable $t$ over the field $k$ by $k[[t]]$. The \emph{arc
 space} $X_\infty$ of the algebraic variety $X$ defined by $f$ is the set of
power series solutions \begin{math} x(t)=(x_1(t),\ldots, x_n(t))\in k[[t]]^n
\end{math} to the equation $f(x(t))=0$. This set
turns out to be eventually algebraic in the sense that it is given by
polynomial equations (though there are countably many of
them). Indeed, expanding $f(x(t))$ as a power series in $t$
gives 
\begin{displaymath}
  f(x(t)) = F_0 + F_1t + F_2t^2 + \cdots
\end{displaymath}
where the $F_i$ are polynomials in the coefficients of $t$ in $x(t)$. Therefore, a given vector of formal
power series $a(t)\in k[[t]]^n$ is an element of the arc space
$X_\infty$ if and only if its coefficients fulfill
the equations $F_0,F_1,\dots$. Algebraically the corresponding set of
solutions is described by its \emph{coordinate algebra}
\begin{displaymath}
  J_\infty(X)= k[x_j^{(i)}; 1\leq j \leq n, i\in \N]/(F_0,F_1,\ldots),
\end{displaymath}
where $\mathbb{N}=\{0,1,2,\ldots \}$. The variable $x_j^{(i)}$ corresponds to the coefficient of $t^i$
in $x_j(t)$. We will mostly be interested in the case where $a(0)$
is a point on $X$ (without loss of generality we may assume that this is the origin). The resulting algebra, obtained from $J_\infty(X)$
by substituting $x_j^{(0)}=0$, is called the \emph{focussed
  arc algebra} and denoted by $J_\infty^0(X)$; we write $f_i$ for
the image of $F_i$ under this substitution:
\begin{displaymath}
  J_\infty^0(X)=k[x_j^{(i)}; 1\leq j \leq n, i\geq 1]/(f_1,f_2,\ldots).
\end{displaymath}
This algebra is naturally graded by the
weight function $\wt\, x_j^{(i)}=i$ since $f_{\ell}$ is homogeneous of weight $\ell$. In
the special case of $n=1$ we will write $y_i$ instead of
$x_1^{(i)}$. Integer partitions arise naturally when computing weights
of monomials in $J^0_\infty(\A^1)$. Recall that a \emph{partition} of
$m\in \N$ is an $r$-tuple of positive integers
$\lambda_1 \leq \lambda_2\leq \cdots \leq \lambda_r$ with $\lambda_1 +
\cdots + \lambda_r =m$. The $\lambda_i$ are the \emph{parts of the
  partition} and $r$ is its \emph{length}. A monomial
$y_1^{\alpha_1}\cdots y_e^{\alpha_e}$ has
weight $\alpha_1\cdot 1 + \cdots + \alpha_e\cdot
e$. Asking for the number of monomials (up to coefficients) of some
weight $m$ is thus asking for the number of partitions of $m$. This is 
precisely what we capture when computing the Hilbert-Poincar\'{e} series of
$J_\infty^0(\A^1)$. In general, the Hilbert-Poincar\'{e} series of $J_{\infty}^0(X)$ is defined as
\begin{displaymath}
  \HP_{J_\infty^0(X)}(t) = \sum_{j=0}^\infty \dim_k
  \left(J_\infty^0(X) \right)_j\cdot t^j,
\end{displaymath}
where $\left(J_\infty^0(X) \right)_j$ denotes the $j$th homogeneous
component of $J_\infty^0(X)$. In the simple case of $X=\A^1$ we may use the generating function for partitions to represent
$\HP_{J_\infty^0(\A^1)}(t)$ by 
\begin{displaymath}
  \mathbb{H} := \prod_{i\geq 1} \frac{1}{1-t^i}.
\end{displaymath}  

By the general theory of Gr\"{o}bner basis $\HP_{J_\infty^0(X)}(t)$ is identical with the Hilbert-Poincar\'{e} series of the algebra 
\begin{displaymath}
  k[x_j^{(i)}; 1\leq j \leq n, i\geq 1]/L(I),
\end{displaymath}
where $L(I)$ denotes the \emph{leading ideal} of $I=(f_0,f_1,\ldots)$ (with respect to 
a chosen monomial ordering). The leading ideal is much simpler since
it is generated by monomials. Computing the Hilbert-Poincar\'{e}
series of the respective algebra corresponds to counting partitions, leaving out
those coming from weights of monomials in $L(I)$. In the simple
example of $f=y^2$, these will be all partitions without repeated or
consecutive parts. Such partitions are part of the well-known
\emph{Rogers-Ramanujan identity}: the number of partitions of $n$ into
parts congruent to 1 or 4 modulo 5 is equal to the number of
partitions of $n$ into parts that are neither repeated nor
consecutive (see \cite{andrews_partitions}). This gives:

\begin{Thm*}
Let $k$ be field of characteristic $0$. For $X\colon y^2=0$ we compute
\begin{displaymath}
 \HP_{J_{\infty}^{0}(X)}(t) = \prod_{\substack{i\geq 1 \\ i\equiv
    1,4 \bmod 5}} \frac{1}{1-t^i}.
\end{displaymath}
\end{Thm*}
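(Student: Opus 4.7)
My plan is to compute a Gröbner basis of the defining ideal, read off the Hilbert-Poincar\'e series from the resulting leading ideal, and then invoke the classical first Rogers-Ramanujan identity to convert the answer into the stated product.

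First, writing $y(t) = y_1 t + y_2 t^2 + \cdots$ and expanding $y(t)^2$, one obtains
$$f_\ell = \sum_{\substack{i+j=\ell \\ i,j \geq 1}} y_i y_j, \qquad \ell \geq 2,$$
so $J_\infty^0(X) = k[y_1, y_2, \ldots]/I$ with $I = (f_2, f_3, \ldots)$. A product-rule calculation shows that the derivation $D$ on $k[y_1, y_2, \ldots]$ with $D(y_i) = y_{i+1}$ satisfies $D(f_\ell) = 2 f_{\ell+1}$; since $\mathrm{char}\, k = 0$, this makes $I$ into the differential ideal generated by the single element $f_2 = y_1^2$.

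Next, I equip the polynomial ring with the weighted graded reverse-lexicographic monomial order determined by $\wt(y_i) = i$ together with the variable order $y_1 < y_2 < \cdots$. A short comparison of the weight-$\ell$ quadratic monomials appearing in $f_\ell$ gives
$$\lt(f_\ell) = \begin{cases} y_k^2, & \ell = 2k, \\ y_k y_{k+1}, & \ell = 2k+1, \end{cases}$$
so that the leading term is always the \emph{most balanced} quadratic term. The decisive step is to show that $\{f_\ell\}_{\ell \geq 2}$ is in fact a Gröbner basis of $I$, equivalently that
$$L(I) = \bigl(y_k^2,\; y_k y_{k+1} \;:\; k \geq 1\bigr).$$
The containment $\supseteq$ is automatic from the leading-term computation; the reverse is the heart of the matter. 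My approach is to verify Buchberger's criterion while taming the infinite generating set through $D$: because $D$ maps $I$ to itself and $D(f_\ell) = 2 f_{\ell+1}$, one expects to propagate the reduction of an $S$-polynomial $S(f_\ell, f_{\ell'})$ to a reduction of $S(f_{\ell+1}, f_{\ell'+1})$ (or of nearby pairs) by applying $D$, so that an inductive argument on weight reduces the entire verification to a finite base layer of $S$-pair checks. This is the principal obstacle: without the differential structure, one faces an unbounded infinite Gröbner computation, and controlling the reductions at every weight simultaneously is precisely what the derivation is designed to do.

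Once this Gröbner basis property is established, the standard monomials of $k[y_1, y_2, \ldots]/L(I)$ are exactly the products $y_{i_1} y_{i_2} \cdots y_{i_r}$ with $i_1 < i_2 < \cdots < i_r$ and $i_{s+1} - i_s \geq 2$ for all $s$; under $\wt$ these are in weight-preserving bijection with partitions into distinct non-consecutive parts. Hence
$$\HP_{J_\infty^0(X)}(t) \;=\; \HP_{k[y_1, y_2, \ldots]/L(I)}(t) \;=\; \sum_{m \geq 0} \#\{\text{partitions of } m \text{ into distinct non-consecutive parts}\}\, t^m,$$
and the first Rogers-Ramanujan identity immediately converts the right-hand side into $\prod_{i \equiv 1,4 \,(\bmod\, 5)} (1-t^i)^{-1}$, yielding the theorem.
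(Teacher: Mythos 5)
Your overall strategy — compute the leading ideal via a Gr\"obner basis, read off the Hilbert--Poincar\'e series, and then apply Rogers--Ramanujan — is exactly the paper's, and the final combinatorial step (standard monomials $\leftrightarrow$ partitions with distinct non-consecutive parts) is correct. But the mechanism you propose for the Gr\"obner basis verification, which you rightly identify as the hard part, rests on a false identity. With $f_\ell=\sum_{i+j=\ell,\,i,j\geq 1}y_iy_j$ and $D(y_i)=y_{i+1}$ one has
\[ D(f_\ell)=2f_{\ell+1}-2\,y_1y_\ell, \]
not $D(f_\ell)=2f_{\ell+1}$. The correction term does not lie in $I$: e.g.\ for $\ell=3$, any weight-$4$ element of $I$ is a linear combination of $y_1^2y_2$ and $2y_1y_3+y_2^2$, so $y_1y_3\notin I$. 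Consequently $I=(f_2,f_3,\ldots)\subset k[y_1,y_2,\ldots]$ is \emph{not} closed under $D$, and the plan to propagate $S$-pair reductions by differentiating inside $k[y_1,y_2,\ldots]$ breaks down at the very first step. The paper circumvents this by never restricting to $k[y_1,y_2,\ldots]$ until the end: it works in $k[y_0,y_1,\ldots]$ with $F_i=D^i(y_0^n)$, where the ideal $(F_0,F_1,\ldots)$ genuinely is differential, starts from the one-line relation $\mathcal{R}\colon ny_1F_0-y_0F_1=0$, differentiates $\mathcal{R}$ repeatedly, and only then substitutes $y_0=0$; the fact that $S(f_i,f_j)=S(F_i,F_j)|_{y_0=0}$ makes this a valid route.

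There is also a second, quieter gap: even if the differential relation held, deriving a reduction of $S(f_\ell,f_{\ell'})$ does not in any obvious way produce a reduction of $S(f_{\ell+1},f_{\ell'+1})$. The derivative of an $S$-polynomial relation involves many extra terms ($y_af_b$ with various indices), and one must check that none of them spoil the leading-term inequality required for reduction to zero. This is precisely what occupies most of Lemma~\ref{lem:gbbasis_nfold_point}: Step 1 prunes the set of $S$-pairs to be checked using Proposition~\ref{prop:reduction_of_syzygy_basis}; Step 2.2 handles consecutive pairs $S(f_i,f_{i+1})$ by a direct analysis of $D^{q(n+1)+r}\mathcal{R}$; and Step 2.3 handles the remaining pairs $S(f_{qn+r},f_{(q+1)n+n-r})$ by a descending induction on $r$ with careful bookkeeping of leading monomials (Propositions~\ref{prop:lm_S_poly_consecutive} and \ref{prop:lm_S_poly_difficult}). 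Your proposal identifies the right obstacle and the right final combinatorics, but the actual Buchberger verification — which is the mathematical content of the theorem — is left as a sketch built on the incorrect identity above, so the proof is incomplete as written.
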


More generally, we obtain using Gordon's generalizations of the
Rogers-Ramanujan identities for $X\colon y^n=0$, $n\geq 2$:

\begin{samepage}
\begin{Thm*}
\begin{displaymath}
\HP_{J_\infty^0(X)}(t)=\Hh\cdot
  \prod_{\substack{i\geq 1\\ i \equiv 0, n,n+1 \\ \bmod 2n+1}} (1-t^i).
\end{displaymath}
\end{Thm*}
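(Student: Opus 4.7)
The plan is to identify the leading ideal of $I := (f_\ell : \ell \geq n) \subset R := k[y_i : i \geq 1]$ (writing $y_i$ for $x_1^{(i)}$ as in the paper) as the monomial ideal
$$J := (y_i^\alpha y_{i+1}^{n-\alpha} : i \geq 1,\ 0 \leq \alpha \leq n),$$
and then to count standard monomials via Gordon's theorem. A monomial $\prod y_i^{c_i}$ lies outside $J$ exactly when $c_i + c_{i+1} \leq n - 1$ for all $i$, and such monomials are in weight-preserving bijection with partitions in which no $n$ parts fall in any consecutive pair $\{m, m+1\}$---equivalently, partitions $b_1 \geq b_2 \geq \cdots$ satisfying $b_j - b_{j+n-1} \geq 2$. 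Gordon's generalization of Rogers--Ramanujan with $k = a = n$ then enumerates these by $\prod_{i \not\equiv 0, n, n+1 \bmod 2n+1}(1 - t^i)^{-1} = \Hh \cdot \prod_{i \equiv 0, n, n+1 \bmod 2n+1}(1 - t^i)$, matching the claim.

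For the monomial order I would take the weighted graded reverse lexicographic order with $y_1 < y_2 < y_3 < \cdots$, so that among length-$n$ monomials of fixed weight $\ell = qn + r$ with $0 \leq r < n$, the ``balanced'' monomial $y_q^{n-r}y_{q+1}^r$ is maximal. Since
$$f_\ell = \sum_{i_1 + \cdots + i_n = \ell,\ i_j \geq 1} y_{i_1} \cdots y_{i_n}$$
is a positive sum over all length-$n$ compositions of $\ell$, its leading term is precisely $y_q^{n-r}y_{q+1}^r$. As $\ell$ ranges over $\ell \geq n$, these leading terms exhaust the generators of $J$, yielding $J \subseteq L(I)$ and hence $\HP_{R/I}(t) \leq \HP_{R/J}(t)$ coefficient-wise.

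The main obstacle is the reverse inclusion: showing $\{f_\ell\}_{\ell \geq n}$ is actually a Gröbner basis, so that $L(I) = J$. The crucial input is that $I$ is a \emph{differential} ideal under the natural derivation $D$ on $R$ coming from $t$-differentiation of arcs, with $D(y_j)$ a scalar multiple of $y_{j+1}$ and $D(f_\ell)$ proportional to $f_{\ell+1}$. This differential structure should allow one to propagate a finite Gröbner-basis check across all $\ell$, reducing the infinite Buchberger criterion to a bounded combinatorial verification; alternatively, one can try to show directly that standard monomials remain linearly independent modulo $I$ by a dimension argument exploiting the $D$-action. Once $L(I) = J$ is established, $\HP_{R/I}(t) = \HP_{R/J}(t)$ and the Gordon count above completes the proof.
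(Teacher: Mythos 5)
Your outline matches the paper's strategy exactly: compute the leading ideal of $I=(f_\ell;\ell\geq n)$ with respect to the weighted reverse lexicographic order, observe it is the monomial ideal $J$ generated by $y_q^{n-r}y_{q+1}^{r}$ for $q\geq 1$, $0\leq r\leq n-1$, then count standard monomials and translate to Gordon's theorem with $k=n$. Your identification of the leading term of $f_\ell$ as the ``balanced'' monomial, the characterization of standard monomials by $c_i+c_{i+1}\leq n-1$, and the bijection with partitions satisfying $b_j-b_{j+n-1}\geq 2$ are all correct. (A harmless discrepancy: your $f_\ell$ is the coefficient of $t^\ell$ in $\bigl(\sum_{i\geq 1}y_it^i\bigr)^n$, whereas the paper works with $n!\,B_{\ell,n}$; these differ only by the variable rescaling $y_i\mapsto y_i/i!$, which is weight-preserving and hence does not affect leading ideals.)

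The genuine gap is the Gr\"obner basis lemma, i.e.\ the claim $L(I)=J$. You correctly flag this as the main obstacle and correctly point to the differential relation $ny_1F_0-y_0F_1=0$ (and its iterated derivatives) as the key structural input, but your description of how this closes the argument --- ``reducing the infinite Buchberger criterion to a bounded combinatorial verification'' --- substantially understates what is required. In the paper this is Lemma~\ref{lem:gbbasis_nfold_point}, whose proof occupies most of Section~\ref{sec:section_5}. One must first (Step 1) use Proposition~\ref{prop:reduction_of_syzygy_basis} to pare the syzygy basis down to two families of $S$-pairs, namely $S(f_{qn+r},f_{qn+r+1})$ and $S(f_{qn+r},f_{(q+1)n+n-r})$. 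The first family is handled by applying $D^{q(n+1)+r}$ to the basic relation and isolating the balanced monomial. The second family requires a descending induction on $r$ in which new problematic terms (e.g.\ $y_qy_{q+2}f_{qn+n}$ in the $r=n-2$ case) appear and must be eliminated by combining the derived relations $\mathcal{A}_1,\mathcal{A}_2$, re-invoking previously reduced $S$-polynomials via Propositions~\ref{prop:lm_S_poly_consecutive} and~\ref{prop:lm_S_poly_difficult}, and iterating a cancellation argument down from the monomial $N=y_{q-1}y_q^{n-r-3}y_{q+1}^{r+3}y_{q+2}^{n-r-1}$ to $\lm\bigl(S(f_{qn+r},f_{(q+1)n+n-r})\bigr)$. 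So while the number of ``types'' of $S$-pairs is bounded in terms of $n$, each type demands a uniform-in-$q$ argument with careful bookkeeping; it is not a finite computation, and the differential structure alone does not make it automatic. The alternative you sketch --- a direct linear independence argument for standard monomials using the $D$-action --- is not pursued in the paper and would still need a mechanism to control exactly these cancellations, so it does not obviously circumvent the difficulty.
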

\end{samepage}

The computation of $L(I)$ in these rather simple
looking cases is nontrivial. It is carried out in Section
\ref{sec:section_5}.\\

Moreover, standard techniques from commutative algebra allow to compute a recursion for the
Hilbert-Poincar\'{e} series in the case of $X\colon y^2=0$:

\begin{Prop*}
The generating series $\HP_{J_{\infty}^{0}(X)}(t)$ is the $t$-adic limit of the sequence of formal power series $A_d$ defined by:
$$A_1=A_2=1, \quad \text{and} \quad A_d = A_{d-1}+t^{d-2} A_{d-2}\text{ for } d\geq 3.$$
\end{Prop*}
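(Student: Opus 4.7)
The plan is to invoke the leading-ideal computation from Section~\ref{sec:section_5}, truncate to finitely many variables, and derive the recursion from a short exact sequence of graded modules. By the Gr\"{o}bner basis computation in Section~\ref{sec:section_5}, the leading ideal of $I = (f_2, f_3, \ldots)$ is $L = (y_i^2, y_i y_{i+1} : i \geq 1)$, so by general Gr\"{o}bner basis theory $\HP_{J_{\infty}^{0}(X)}(t) = \HP_{k[y_1, y_2, \ldots]/L}(t)$. For each $d \geq 0$ introduce
\[
B_d := k[y_1, \ldots, y_d]/(y_i^2, y_j y_{j+1} : 1 \leq i \leq d, 1 \leq j \leq d-1),
\]
and set $C_d(t) := \HP_{B_d}(t)$. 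Since every generator of $L$ has weight at least $2$, only the variables $y_1, \ldots, y_m$ can appear in the weight-$m$ part of $k[y_1, y_2, \ldots]/L$, so $C_d(t) \equiv \HP_{J_{\infty}^{0}(X)}(t) \pmod{t^{d+1}}$; in particular $C_d \ra \HP_{J_{\infty}^{0}(X)}(t)$ in the $t$-adic topology.

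Next, I would derive a recursion for $C_d$. The ring $B_d$ has an explicit monomial basis given by the squarefree products of $y_i$'s whose index set contains no two consecutive integers. A direct inspection of this basis shows $\textnormal{Ann}_{B_d}(y_d) = (y_{d-1}, y_d)$, since a non-zero basis monomial is killed by $y_d$ precisely when its support meets $\{d-1, d\}$. The short exact sequence of graded modules
\[
0 \lra B_d/\textnormal{Ann}_{B_d}(y_d)(-d) \xrightarrow{\cdot y_d} B_d \lra B_d/(y_d) \lra 0,
\]
combined with the identifications $B_d/(y_d) \isom B_{d-1}$ and $B_d/(y_{d-1}, y_d) \isom B_{d-2}$, yields by additivity of Hilbert-Poincar\'{e} series
\[
C_d(t) = C_{d-1}(t) + t^d C_{d-2}(t), \qquad C_0 = 1, \quad C_1 = 1 + t.
\]

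Finally, since $A_2 = 1 = C_0$ and $A_3 = 1 + t = C_1$, an induction using $A_d = A_{d-1} + t^{d-2} A_{d-2}$ and the recursion just derived shows $A_{d+2} = C_d$ for every $d \geq 0$. Passing to the $t$-adic limit yields $\lim_d A_d = \lim_d C_{d-2} = \HP_{J_{\infty}^{0}(X)}(t)$, as required. The main step requiring care is the annihilator identification $\textnormal{Ann}_{B_d}(y_d) = (y_{d-1}, y_d)$: this is the commutative-algebraic translation of the ``no equal or consecutive parts'' combinatorics already visible in $L$, and it is precisely what converts a single short exact sequence into the two-term recursion of the proposition; everything else is routine additivity of $\HP$.
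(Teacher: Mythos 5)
Your proof is correct, but it takes a genuinely different route from the paper. You both start from the same Gr\"obner-basis computation $L(I) = (y_i^2, y_iy_{i+1};\, i\geq 1)$ and both rely on the additivity of Hilbert--Poincar\'e series under multiplication by a homogeneous element (Lemma~\ref{lem:hilbert_trick_1} / Corollary~\ref{cor:hilbert_hilbert_trick_2}), but the truncation directions are opposite. You work with the \emph{head} truncations $B_d = k[y_1,\ldots,y_d]/(\cdots)$, identify $\textnormal{Ann}_{B_d}(y_d)=(y_{d-1},y_d)$, and the multiplication-by-$y_d$ sequence yields the forward recursion $C_d = C_{d-1} + t^d C_{d-2}$; the match $A_{d+2}=C_d$ and the convergence $C_d \to \HP_{J_\infty^0(X)}(t)$ are then both immediate, the latter because the weight-$\leq d$ part of $k[y_1,y_2,\ldots]/L(I)$ involves only $y_1,\ldots,y_d$. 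The paper instead works with the \emph{tail} rings $k[y_i;\, i\geq d]/I_d$ and the ideal quotient $(I_d:y_d)$, obtaining a backward recursion $h(d) = h(d+1) + t^d h(d+2)$ in which $h(d)\to 1$; extracting the recursion for $A_d$ then requires the extra continued-fraction-style expansion $h(1) = A_d h(d) + B_{d+1} h(d+1)$ and an order estimate on $B_d$. Your version is shorter and avoids semi-infinite polynomial rings altogether; the trade-off is that it loses the interpretation, remarked on in the paper, of the Andrews--Baxter series $G_i$ as the Hilbert--Poincar\'e series $h(i)$ of the tail quotients $k[\geq i]/I_i$ (your $C_d$ instead counts partitions into parts $\leq d$ with no equal or consecutive parts).
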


The above proposition was first found in an empirical way in
\cite{andrews_baxter}, and it leads to the Rogers-Ramanujan
identities (see Section \ref{sec:section_5}).\\

Returning to the geometric aspect of the series attached to arc spaces, we
compute them in other interesting cases: for smooth
points, for rational double points of surfaces, and for normal
crossings singularities. In these cases the simple geometry of the jet
schemes permits to compute the Hilbert-Poincar\'{e} series defined
above, and we find the following (see
Propositions~\ref{prop:smooth_case}, \ref{cor:rational_double_point}
and \ref{prop:normal_crossings}).

\begin{Prop*} With the above introduced notation, we have:
\begin{itemize}
\item[(1)] If $\mathfrak{p}$ is a smooth point on a variety $X$ of dimension $d$, then 
\[ \HP_{J_{\infty}^{\mathfrak{p}}(X)} = \left(\prod_{i\geq 1} \frac{1}{1-t^i}\right)^d, \]
\item[(2)] if $X$ is a surface with a rational double point at $\mathfrak{p}$ then 
\[ \HP_{J_{\infty}^{\mathfrak{p}}(X)}(t) =  \left(\frac{1}{1-t}\right)^{3} \left(\prod_{i\geq 2} \frac{1}{1-t^i}\right)^2, \]
\item[(3)] if $X= \{ x_1\cdots x_{d+1} = 0\} \subset \mathbb{A}^{d+1}_k$, and $\mathfrak{p}$ is a point in the intersection of precisely $e$ components, then 
\[ \HP_{J_{\infty}^{\mathfrak{p}}(X)}(t) =  \left(\prod_{i=1}^{e-1} \frac{1}{1-t^i}\right)^{d+1} \left(\prod_{i\geq e} \frac{1}{1-t^i}\right)^d. \]
\end{itemize}
\end{Prop*}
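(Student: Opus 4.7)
The plan is to treat the three cases in turn, using the distinctive local geometry of each. Throughout we use the focussed arc algebra $J_\infty^{\mathfrak{p}}(X) = k[x_j^{(i)}]/(f_1, f_2, \ldots)$ after translating $\mathfrak{p}$ to the origin. For (1) one invokes \'{e}tale invariance of arc spaces: an \'{e}tale-local isomorphism carries the germ $(X, \mathfrak{p})$ to $(\A^d, 0)$, inducing a graded isomorphism $J_\infty^{\mathfrak{p}}(X) \isom J_\infty^0(\A^d)$. The latter is the free polynomial algebra $k[x_j^{(i)} : 1\leq j \leq d,\, i\geq 1]$ with $\wt x_j^{(i)} = i$, whose Hilbert--Poincar\'{e} series is visibly $\Hh^d$.

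For (3), translate so that $\mathfrak{p}$ becomes the origin. In the new coordinates (still called $x_j$), the equation of $X$ becomes $x_1 \cdots x_e \cdot \prod_{j>e}(x_j + p_j)$ with $p_j \neq 0$ for $j > e$. Evaluated on arcs, the second factor has nonzero constant term $\prod_{j>e} p_j$ and is therefore a unit in $k[x_j^{(i)}][[t]]$. Hence the ideal generated by the coefficients of the full product coincides with the one generated by the coefficients of $x_1(t) \cdots x_e(t)$, while the variables $x_j^{(i)}$ with $j > e$ decouple as free generators of weight $i$. We obtain
\[
J_\infty^{\mathfrak{p}}(X) \;\isom\; R_e \otimes_k k[x_j^{(i)} : j > e,\, i \geq 1],
\]
where $R_e$ is the focussed arc algebra of $V(x_1 \cdots x_e) \subset \A^e$ at its $e$-fold intersection point. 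The free factor contributes $\Hh^{d+1-e}$, reducing the claim to the identity $\HP_{R_e}(t) = \prod_{i=1}^{e-1}(1-t^i)^{-e} \cdot \prod_{i\geq e}(1-t^i)^{1-e}$. I would establish this by induction on $e$: since the defining ideal of $R_e$ is differential with respect to $D(x_j^{(i)}) = (i+1)x_j^{(i+1)}$, satisfying $D(f_\ell) = (\ell+1)f_{\ell+1}$, one can hope to write down an explicit Gr\"{o}bner basis of $(f_e, f_{e+1}, \ldots)$ and read off a monomial $k$-basis of the quotient.

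For (2), write $X = V(f)$ with $f \in k[x,y,z]$ cutting out the rational double point at $\mathfrak{p} = 0$. Multiplicity two forces $F_1 = 0$ and $F_2$ to be a nonzero quadratic form in the $x_j^{(1)}$. The target $(1-t)^{-3}\prod_{i\geq 2}(1-t^i)^{-2}$ equals $\Hh^3 \cdot \prod_{i\geq 2}(1-t^i)$, which is precisely what a graded regular sequence $(F_2, F_3, \ldots)$ would cut out of the ambient $\Hh^3$. To obtain this scheme-theoretically I would combine Musta\c{t}\u{a}'s theorem that jet schemes of rational locally complete intersection singularities are irreducible of expected dimension (which controls the reduced structure level by level) with a direct graded dimension count on each jet scheme, aided where needed by the minimal resolution $\tilde{X} \to X$ stratifying arcs by their contact order with the exceptional locus.

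The main obstacle is case (2): the ideal $(F_\ell)_{\ell \geq 2}$ is not a complete intersection in the naive sense (for $A_1$ one already sees nontrivial syzygies among the $F_\ell$), and the Hilbert--Poincar\'{e} series is sensitive to precisely the non-reduced structure emphasised in the introduction. Showing that the series nonetheless matches the complete-intersection prediction demands honest control over the non-reduced arc algebra, and that is where the argument will be most delicate.
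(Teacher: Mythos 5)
Your case (1) is essentially the paper's argument: one reduces, via the analytic invariance of the focussed arc algebra (the paper's Proposition~\ref{prop:series_is_analytic_invariant}), to the origin in $\mathbb{A}^d_k$, where $J_m^0(\mathbb{A}^d)$ is a free polynomial ring and the series is read off from the truncations (Proposition~\ref{prop:approximate_by_jet_spaces}). Your framing via \'{e}tale invariance is a mild variant but lands in the same place.

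For (2) and (3), however, the proposal identifies the right objects but never supplies the one lemma that actually closes both computations, and this is a genuine gap rather than a detail. The paper's engine is a regular-sequence argument at each truncation level $m$: one shows that the focussed equations $f_{n-1},\dots,f_m$ (respectively $f_e,\dots,f_m$) form a regular sequence in $k[x_j^{(i)}; i\le m]$, whence Lemma~\ref{lem:hilbert_trick_1} gives the stated product formula. That regularity is deduced in three steps --- (i) a dimension bound on the fiber of $\pi_m$ over the origin (from Ein--Musta\c{t}\u{a}'s irreducibility theorem for canonical singularities in case (2), and from Goward--Smith's equidimensionality theorem for normal crossings in case (3)); (ii) Krull's principal ideal theorem to upgrade the codimension bound to equality; (iii) Cohen--Macaulayness of the polynomial ring plus the unmixedness theorem to conclude that each new $f_{m+1}$ is a nonzerodivisor modulo the previous ones. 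Your sketch gestures at irreducibility of jet schemes and ``a direct graded dimension count'' but never executes steps (ii) and (iii), which is precisely where the work lies. Relatedly, your caveat that ``the ideal $(F_\ell)_{\ell\geq 2}$ is not a complete intersection in the naive sense'' points in the wrong direction: the whole point of the paper's proof is that in every finite truncation the focussed generators \emph{do} form a regular sequence, so the complete-intersection prediction for the Hilbert series is exact. In case (3), your proposal to instead exhibit an explicit Gr\"obner basis for $V(x_1\cdots x_e)$ by induction on $e$ is not carried out and would be substantially harder than the paper's route; the product decomposition and multiplicativity (Proposition~\ref{prop:multiplicativity_for_products}) that you begin with match the paper, but after that you would still need the regular-sequence input (or a genuine Gr\"obner computation) to conclude.
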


In the past several generating series have been associated to
the arc space of a (singular) variety by Denef and Loeser (see
\cite{denef_loeser,denef_loeser_definable_sets}), in analogy with the
$p$-adic case. All those series are defined in the motivic setting and
take values in a power series ring with coefficients in (a
localization of) a Grothendieck ring. They are rational. Some of those
series encode more information about the singularities than
others. For a comparison between them see
\cite{nicaise_manuscripta,nicaise,helena_pedro}. While those series
are concerned with the reduced structure of the arc space, our series
is \emph{sensitive to the non-reduced structure} as well, although it is not
rational in general. We discuss more geometric motivation for introducing the above Hilbert-Poincar\'{e} series in Section~\ref{sec:section_3} (after Definition~\ref{def:arc_Hilbert}).\\

The structure of the paper is as follows: in Section~\ref{sec:section_2}, we define jet schemes and arc spaces and recall some basic facts about them.
In Section~\ref{sec:section_3} we introduce the arc
Hilbert-Poincar\'{e} series, we discuss its properties and we compute
it in particular cases. Section~\ref{sec:section_4} recalls basic
facts about partitions and the Rogers-Ramanujan
identities. Section~\ref{sec:section_5} is devoted to the main
theorem. For the convenience of the reader, we recall the facts about
Hilbert-Poincar\'{e} series and Gr\"obner bases that we use in the
paper in an appendix (Section~\ref{appendix}).

\subsection*{Acknowledgements} 
The authors thank the organizers of YMIS for preparing the ground for
this joint research. The first named author expresses his gratitude to
Herwig Hauser, Georg Regensburger and Josef Schicho for many useful
discussions on these topics; the second named author is grateful to
Monique Lejeune-Jalabert for introducing him to Hilbert-Poincar\'{e}
series; the third named author thanks Johannes Nicaise for explaining 
him several facts about arc spaces in detail and he thanks the
Institut des Hautes \'{E}tudes Scientifiques, where part of this work
was done, for hospitality.\\

\section{Jet schemes and arc spaces}\label{sec:section_2}

Let $k$ be a field. Let $X=\Spec \bigl(k[x_1, \ldots ,x_n] / (g_1, \ldots ,g_r)\bigr)$ be an affine scheme of finite type over $k$. For $l\in \{1, \ldots ,r\}$, and $j\in \{0, \ldots ,m\}$, we define the polynomial $G_l^{(j)} \in k[x_s^{(i)}; 1\leq s\leq n, 0\leq i\leq m]$ as the coefficient of $t^j$ in the expansion of
\begin{equation}
  g_l( x_1^{(0)}+x_1^{(1)}t+  \cdots  + x_1^{(m)} t^m, \ldots, x_n^{(0)}+x_n^{(1)} t+  \cdots  + x_n^{(m)}t^m ) . 
 \end{equation}
Then the \textit{$m$th jet scheme} $X_m$ of $X$ is
\[ X_m := \Spec\left(\frac{k[x_s^{(i)}; 1\leq s\leq n, 0\leq i\leq m]}{(G_l^{(j)}; 1\leq l \leq r , 0\leq j \leq m) } \right). \]
In particular, we have that $X_0=X$. Of course, we do not need to fix $m$ in advance, and we can define $G_l^{(j)} \in k[x_s^{(i)}; 1\leq s\leq n, i\in \mathbb{N}]$ for all $j\in \mathbb{N}$ as above. Here $\mathbb{N} = \{0,1,\ldots\}$. Then the \textit{arc space} $X_{\infty}$ of $X$ is
\[ X_{\infty} := \Spec\left(\frac{k[x_s^{(i)}; 1\leq s\leq n, i\in \mathbb{N}]}{(G_l^{(j)}; 1\leq l \leq r , j\in \mathbb{N}) } \right). \]
The functorial definition is also useful. Let $X$ be a scheme of finite type over $k$ and let $m \in \mathbb{N}$. The functor 
\[ \mathcal{F}_m : k\text{-Schemes} \rightarrow \text{Sets} \]
which to an affine scheme defined by a $k$-algebra $A$ associates 
\[ \mathcal{F}_m(\Spec(A))= \Hom_k\bigr(\Spec\bigl(A[t]/(t^{m+1}) \bigr) ,X\bigr) \]
is representable by the $k$-scheme $X_m$ (see for example \cite{ishii_intro,vojta}). The arc space $X_{\infty}$ represents the functor $\mathcal{F}_{\infty}$ that associates to a $k$-algebra $A$ the set $\Hom_k\bigr( \Spf(A[[t]]) ,X\bigr)$, where $\Spf$ denotes the formal spectrum. 

For $m,p \in \mathbb{N}, m > p$, the truncation homomorphism $A[t]/(t^{m+1}) \rightarrow A[t]/(t^{p+1})$ induces
a canonical projection $\pi_{m,p}: X_m \rightarrow X_p.$ These morphisms clearly verify $\pi_{m,p}\circ \pi_{q,m}=\pi_{q,p}$
for $p<m<q$. We denote the canonical projection $\pi_{m,0}:X_m\rightarrow X_0$ by $\pi_{m}$. For $m\in \mathbb{N}$ we also have the truncation morphism $A[[t]] \rightarrow A[t]/(t^{m+1})$. It gives rise to a canonical morphism $\psi_{m}: X_{\infty} \longrightarrow X_m$.

Assume now that $k$ has characteristic zero. In that case we can explicitly determine the ideals defining the jet schemes and the arc space. Let $S=k[x_1, \ldots ,x_n]$ and
$S_{m}=k[x_s^{(i)}; 1\leq s \leq n, 0\leq i\leq m]$. Let $D$ be the $k$-derivation on $S_m$ defined by $D(x_s^{(i)}):= x_s^{(i+1)}$ if $0\leq i <m$, and $D(x_s^{(m)}):= 0$. We embed $S$ in $S_m$ by mapping $x_i$ to $x_i^{(0)}$.

\begin{Prop}\label{prop:equations_by_deriving} 
Let $X=\Spec\bigl(S/(g_1, \ldots ,g_r)\bigr)$ and let $J_m(X)$ be the coordinate ring of $X_m$. Then
$$J_m(X) = \frac{S_m}{(D^j(g_l) ; 1\leq l\leq r, 0\leq j\leq m)}.$$
\end{Prop}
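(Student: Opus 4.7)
Since $X_m = \Spec\bigl(S_m/(G_l^{(j)})\bigr)$ by construction, the claim reduces to the equality of ideals
\[I := (G_l^{(j)};\, 1\le l\le r,\,0\le j\le m) \;=\; (D^j(g_l);\,1\le l\le r,\,0\le j\le m) =: J\]
inside $S_m$. I would prove the sharper pointwise identity $D^j(g_l) = j!\cdot G_l^{(j)}$ for every $l$ and every $0\le j\le m$. In characteristic zero each $j!$ is invertible, so this identity forces the two generating families to agree up to unit multiples, hence $I=J$, and the proposition follows.

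The identity $D^j(g_l) = j!\cdot G_l^{(j)}$ is established by induction on $j$. The base case $j=0$ is the tautology $g_l = g_l(x_1^{(0)},\ldots,x_n^{(0)}) = G_l^{(0)}$. The inductive step is immediate from the compatibility recursion
\[D(G_l^{(j)}) = (j+1)\,G_l^{(j+1)}, \qquad 0\le j<m,\]
because then $D^{j+1}(g_l) = D(j!\,G_l^{(j)}) = j!(j+1)\,G_l^{(j+1)} = (j+1)!\,G_l^{(j+1)}$.

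To verify the recursion I would extend $D$ to a derivation on the larger ring $S_m[t]$ by decreeing $D(t):=0$. A short unwinding of definitions shows that, acting on the generic jet $x_s(t):=\sum_{i=0}^m x_s^{(i)}t^i$, the derivation $D$ corresponds precisely to formal $t$-differentiation. The chain rule inside $S_m[t]$ therefore gives
\[D\bigl(g_l(x_1(t),\ldots,x_n(t))\bigr) \;=\; \sum_{s=1}^n \frac{\partial g_l}{\partial x_s}\bigl(x_1(t),\ldots,x_n(t)\bigr)\cdot D(x_s(t)) \;=\; \frac{d}{dt}\bigl(g_l(x_1(t),\ldots,x_n(t))\bigr),\]
and reading the coefficient of $t^j$ off both sides yields exactly $D(G_l^{(j)}) = (j+1)\,G_l^{(j+1)}$.

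The main conceptual point is this chain-rule identification of $D$ with the formal $t$-derivative on the universal jet; once it is in place the argument reduces to an elementary induction and an invocation of characteristic zero. The only genuine bookkeeping concerns the truncation $D(x_s^{(m)})=0$, but this is harmless: a quick induction shows that $D^j(g_l)$ only involves the variables $x_s^{(i)}$ with $i\le j$, so when $D$ is iterated at most $m$ times starting from $g_l$ the top-level variables $x_s^{(m)}$ are never differentiated, and the truncation plays no role.
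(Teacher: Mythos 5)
Your claimed pointwise identity $D^j(g_l) = j!\,G_l^{(j)}$ is false, and the chain-rule step where it goes wrong is explicit: $D$ is \emph{not} the formal $t$-derivative on the unnormalized universal jet. Indeed, $D\bigl(x_s(t)\bigr) = \sum_{i=0}^{m-1} x_s^{(i+1)}t^i$, whereas $\tfrac{d}{dt}\bigl(x_s(t)\bigr) = \sum_{i=0}^{m-1}(i+1)\,x_s^{(i+1)}t^i$; the two differ by the factors $i+1$, so your claimed recursion $D(G_l^{(j)}) = (j+1)G_l^{(j+1)}$ fails. Concretely, for $g=x_1^2$ and $m=2$ one has $G^{(2)} = (x_1^{(1)})^2 + 2x_1^{(0)}x_1^{(2)}$ but $D^2(g) = 2(x_1^{(1)})^2 + 2x_1^{(0)}x_1^{(2)} \neq 2!\,G^{(2)}$. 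Moreover the two ideals $(G_l^{(j)})$ and $(D^j(g_l))$ are genuinely different subsets of $S_m$ in this example, so your reduction to ``equality of ideals'' cannot go through as stated.

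The fix is exactly what the paper does: work with the rescaled universal jet $\tilde{x}_s(t) = \sum_{i=0}^m \tfrac{x_s^{(i)}}{i!}\,t^i$. On \emph{this} jet, $D$ does coincide with $\tfrac{d}{dt}$ (up to truncation), and one obtains $\tau_m\bigl(g_l(\tilde{x}(t))\bigr) = \sum_{j=0}^m \tfrac{D^j(g_l)}{j!}\,t^j$. The passage from the original substitution to the rescaled one is the coordinate change $x_s^{(i)} \mapsto x_s^{(i)}/i!$ of $S_m$, which is an automorphism in characteristic zero. So the proposition's ``$=$'' is really an identification up to this rescaling automorphism --- one presents $J_m(X)$ using rescaled jet coordinates --- not a literal equality of ideals with the $G_l^{(j)}$-presentation. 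Your instinct (use characteristic zero to invert factorials, and identify $D$ with $\tfrac{d}{dt}$) is the right one, but the factorials must be absorbed into the substitution rather than appear as scalar multiples of the $G_l^{(j)}$.
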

 
\begin{proof}
Since $k$ has characteristic zero, we may equally well replace $x_i$ by
\[ \frac{x_i^{(0)}}{0!} + \frac{x_i^{(1)}}{1!} t + \cdots + \frac{x_i^{(m)}}{m!} t^m \]
to obtain the equations of the jet space. For $g\in S$ we denote then
\[ \phi(g) := g\biggl( \frac{x_1^{(0)}}{0!} + \frac{x_1^{(1)}}{1!} t + \cdots + \frac{x_1^{(m)}}{m!} t^m ,\ldots , \frac{x_n^{(0)}}{0!} + \frac{x_n^{(1)}}{1!} t + \cdots + \frac{x_n^{(m)}}{m!} t^m \biggr).\]
Then we have
$$ \tau_m\bigl(\phi(g)\bigr) = \sum_{j=0}^m \frac{D^j(g)}{j!}\, t^j,$$
where $\tau_m$ means truncation at degree $m$. To see this, it is sufficient to remark that it is true for $g=x_i$, and that both sides of the
equality are additive and multiplicative in $g$ (after truncating at degree $m$). The proposition follows.
\end{proof}

Similarly, the coordinate ring $J_{\infty}(X)$ of $X_{\infty}$ is given by 
\[ J_{\infty}(X) = \frac{k[x_s^{(i)}; 1\leq s \leq n, i\in \mathbb{N}]}{(D^j(g_l) ; 1\leq l\leq r, j\in \mathbb{N})} .\] 
Here $D(x_s^{(i)}) = x_s^{(i+1)}$ for all $i\in \mathbb{N}$. For further understanding of the equations of the jet schemes and their relation with Bell polynomials, see \cite{bruschek_thesis,bruschek_JetAndBell}.

\section{The arc Hilbert-Poincar\'{e} series}\label{sec:section_3}

In this section we introduce and discuss the Hilbert-Poincar\'{e} series of the arc algebra of a (not necessarily reduced nor irreducible) algebraic variety $X$, focussed at a point $\mathfrak{p}$ of $X$. Since this will be a local invariant, we may restrict ourselves to $X$ being a closed subscheme of affine space. For generalities about Hilbert-Poincar\'{e} series of graded algebras we refer to the appendix (Section~\ref{appendix}).\\

As above, let $k$ be a field of characteristic zero. Although for most of the statements it is not necessary that $k$ is algebraically closed, we will assume it for convenience. Let $X$ be a subscheme of affine $n$-space over $k$, defined by some ideal $I$ in $k[x_1,\ldots , x_n]$. We define a grading on the polynomial ring $k[x_j^{(i)};1\leq j \leq n, i\in  \mathbb{N}]$ by putting the weight of $x_j^{(i)}$ equal to $i$. We prefer to use the terminology `weight' instead of `degree' here, in order not to confuse with the usual degree. It is easy to see that the ideal $I_{\infty}$ of $k[x_j^{(i)};1\leq j \leq n, i\in  \mathbb{N}]$ defining the arc space $X_{\infty}$ is homogeneous (with respect to the weight) and hence the arc algebra $J_{\infty}(X)$ is graded as well (this follows for instance from Proposition~\ref{prop:equations_by_deriving}). Similarly the jet algebras $J_m(X)$ are graded. Let $\mathfrak{p}$ be any point of $X$ and denote by $\kappa(\mathfrak{p})$ the residue field at $\mathfrak{p}$. After identifying $J_0(X)$ with the coordinate ring of $X$ we obtain a natural map from $J_0(X)$ to $\kappa(\mathfrak{p})$. 

\begin{Def}\label{def:arc_Hilbert}
We define the \emph{focussed arc algebra 
of $X$ at $\mathfrak{p}$} as
\[ J_{\infty}(X) \otimes_{J_0(X)} \kappa(\mathfrak{p}) \]
and we denote it by $J_{\infty}^{\mathfrak{p}}(X)$. Analogously, we define the \emph{focussed jet algebras of $X$ at $\mathfrak{p}$} by
\[ J_{m}^{\mathfrak{p}}(X):= J_{m}(X) \otimes_{J_0(X)} \kappa(\mathfrak{p}). \]
Using the above grading, we write $ \HP_{J_{\infty}^{\mathfrak{p}}(X)}(t)$ respectively $\HP_{J_{m}^{\mathfrak{p}}(X)}(t)$ for their Hilbert-Poincar\'{e} series as graded $\kappa(\mathfrak{p})$-algebras. We call this the \emph{arc Hilbert-Poin\-ca\-r\'{e} series at $\mathfrak{p}$} respectively the \emph{mth jet Hilbert-Poincar\'{e} series at $\mathfrak{p}$}.
\end{Def}

In fact, the focussed arc algebra at a point $\mathfrak{p}$ is the coordinate ring of the scheme theoretic fiber of the morphism $\psi_0 : X_{\infty} \to X$ over $\mathfrak{p}$. Note that the weight zero part of $J_{\infty}^{\mathfrak{p}}(X)$ or $J_{m}^{\mathfrak{p}}(X)$ is always a one-dimensional $\kappa(\mathfrak{p})$-vector space. \\

In the special case that $X$ is a hypersurface given by a polynomial $F\in k[x_1,\ldots,x_n]$ with $F(0)=0$, this boils down to the following. We define $F_0$ to be $F$ in the variables $x_1^{(0)},\ldots , x_n^{(0)}$. Then we put $F_1:= DF_0, F_2:=DF_1,\ldots $, where $D$ is the derivation from Section~\ref{sec:section_2}.
The arc algebra $J_{\infty}(X)$ is given as the quotient of $k[x_j^{(i)}; 1\leq j\leq n, i\in \mathbb{N}]$ by $(F_0,F_1,\ldots )$ (see Proposition~\ref{prop:equations_by_deriving}). And $J_{\infty}^0(X)$ is the quotient of $k[x_j^{(i)}; 1\leq j\leq n, i\geq 1]$ by $(f_0,f_1,\ldots )$, where $f_i$ is $F_i$ evaluated in $x_1^{(0)}= \cdots = x_n^{(0)} = 0$.

\begin{Rem}
Besides that the arc Hilbert-Poincar\'{e} series is very natural to look
at when working with arc spaces, its introduction is motivated by the
following. If $X$ is for instance a hypersurface then the ideal $I_m^0 = (f_0,f_1,\ldots )$ defining the fiber over the origin of the $m$th jet space of $X$ is generated by polynomials depending only on a subset of the variables
of the polynomial ring $k[x_j^{(i)}; 1\leq j\leq n, i\geq 1]$. Heuristically,
for a given $m \in \mathbb{N}$, the more $X$ is singular, the less
variables appear in $I_m^0$. So this series was meant as a
Hironaka type invariant of the singularity (see
\cite{berthomieu_hivert_mourtada}), that is a kind of measure of the
number of variables appearing in $I^{0}_m$.

Note also that since the jet spaces are far from being equidimensional
in general (see
\cite{mourtada_plane_branches,mourtada_toric_surfaces}), the jet
algebras have a big homological complexity, what makes it difficult to
compute the series introduced above.
\end{Rem}

\begin{Rem}\label{rem:graded_structure}
We can define the graded structure on $J_{\infty}(X)$ more intrinsically as follows, with $X$ as above. For an extension field $K$ of $k$, the $K$-rational points of $X_{\infty}$ correspond to morphisms of $k$-algebras
\[  \gamma : \Gamma(X,\mathcal{O}_X) \to K[[t]]. \]
For $\lambda\in k^{\times}$ we have an automorphism $\varphi_{\lambda}$ of $K[[t]]$ determined by $t\mapsto \lambda t$. By composing $\varphi_{\lambda}$ with $\gamma$ we obtain a natural action of $k^{\times}$ on $X_{\infty}$ and hence on its coordinate ring $J_{\infty}(X)$. An element $f\in J_{\infty}(X)$ is then called homogeneous of weight $i$ if $\lambda\cdot f = \lambda^i f$, for all $\lambda \in k^{\times}$.  
\end{Rem}

We consider the truncation operator
\[  \tau_{\leq r} : k[[t]] \to k[t] : \sum_{i\geq 0} a_it^i \mapsto \sum_{i=0}^r a_it^i.\] 
Then we have the following simple observation.

\begin{Prop}\label{prop:approximate_by_jet_spaces}
$\tau_{\leq m} \HP_{J_{m}^{\mathfrak{p}}(X)}(t) = \tau_{\leq m} \HP_{J_{\infty}^{\mathfrak{p}}(X)}(t).$
\end{Prop}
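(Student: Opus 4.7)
The plan is to show that the weight-$k$ homogeneous piece of $J_m^{\mathfrak{p}}(X)$ coincides with that of $J_\infty^{\mathfrak{p}}(X)$ for every $k \le m$; once this is done, the equality of truncated Hilbert--Poincar\'e series is immediate from the definition
\[ \HP_{J_\bullet^{\mathfrak{p}}(X)}(t) = \sum_{k \ge 0} \dim_{\kappa(\mathfrak{p})} \bigl(J_\bullet^{\mathfrak{p}}(X)\bigr)_k \, t^k. \]

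First I would make the quotient presentations explicit using Proposition~\ref{prop:equations_by_deriving}. Without loss of generality assume $\mathfrak{p} = 0$ is the origin. Write $R_\infty := k[x_s^{(i)}; 1\le s\le n, i\ge 1]$ and $R_m := k[x_s^{(i)}; 1\le s\le n, 1\le i\le m]$, both graded by $\wt\, x_s^{(i)} = i$. Denote by $h_l^{(j)}$ the element obtained from $D^j(g_l)$ by setting $x_s^{(0)} = 0$. The key structural remark is that $h_l^{(j)}$ is homogeneous of weight exactly $j$: the derivation $D$ raises weight by one, and the evaluation at $x_s^{(0)} = 0$ keeps the result homogeneous since $\wt x_s^{(0)} = 0$. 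Then $J_\infty^{\mathfrak{p}}(X) = R_\infty/(h_l^{(j)}; 1\le l\le r, j\ge 1)$ and $J_m^{\mathfrak{p}}(X) = R_m/(h_l^{(j)}; 1\le l\le r, 1\le j\le m)$.

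The main step is a weight-counting check. Fix $k \le m$. Every monomial of weight $k$ in $R_\infty$ uses only variables $x_s^{(i)}$ with $i \le k \le m$, so $(R_\infty)_k = (R_m)_k$ as $\kappa(\mathfrak{p})$-vector spaces. Furthermore, any weight-$k$ element of the defining ideal in $R_\infty$ can be written as a finite sum $\sum_{l,j} p_{l,j}\, h_l^{(j)}$ with $\wt(p_{l,j}) + j = k$; this forces $j \le k \le m$, and each coefficient $p_{l,j}$ has weight $k - j \le m$, hence lies in $R_m$. Consequently the weight-$k$ piece of the ideal is the same whether computed in $R_\infty$ (with all generators $j\ge 1$) or in $R_m$ (with the truncated family $1\le j\le m$). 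Therefore
\[ \bigl(J_\infty^{\mathfrak{p}}(X)\bigr)_k = \bigl(J_m^{\mathfrak{p}}(X)\bigr)_k \]
for all $k \le m$, and the proposition follows. There is no genuine obstacle here; the only subtlety is ensuring that truncating both the variable set and the generator set at index $m$ does not disturb the weight-$\le m$ part of either ring or ideal, which the weight bookkeeping above makes transparent.
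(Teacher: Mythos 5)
Your argument is correct and is exactly the natural weight-bookkeeping observation that the paper leaves to the reader: all generators $h_l^{(j)}$ are homogeneous of weight $j$, so in weight $k\le m$ neither the ambient polynomial ring nor the ideal sees any variable $x_s^{(i)}$ with $i>m$ or any generator with $j>m$. One small imprecision: for a general (possibly non-closed) point $\mathfrak{p}$ you cannot literally translate to the origin; instead one should present $J_m^{\mathfrak{p}}(X)$ and $J_\infty^{\mathfrak{p}}(X)$ as quotients of $\kappa(\mathfrak{p})[x_s^{(i)}; i\ge 1]$ by the generators $G_l^{(j)}$ with the $x_s^{(0)}$ replaced by their images in $\kappa(\mathfrak{p})$ — but these are still homogeneous of weight $j$, so your argument carries over verbatim.
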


Now let $X$ and $Y$ be closed subschemes of $\mathbb{A}^n_k$ respectively $\mathbb{A}^m_k$. Recall that one calls $\mathfrak{p}\in X$ and $\mathfrak{q}\in Y$ \textit{analytically isomorphic} if $\widehat{\mathcal{O}}_{X,\mathfrak{p}}$ and $\widehat{\mathcal{O}}_{Y,\mathfrak{q}}$ are isomorphic $k$-algebras.

\begin{Prop}\label{prop:series_is_analytic_invariant}
If $\mathfrak{p}\in X$ and $\mathfrak{q}\in Y$ are analytically isomorphic then 
\[ \HP_{J_{\infty}^{\mathfrak{p}}(X)}(t) =  \HP_{J_{\infty}^{\mathfrak{q}}(Y)}(t).\]
\end{Prop}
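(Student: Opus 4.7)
The plan is to show that the graded $\kappa(\mathfrak{p})$-algebra $J_\infty^{\mathfrak{p}}(X)$ depends only on the complete local $k$-algebra $\widehat{\mathcal{O}}_{X,\mathfrak{p}}$; the proposition is then immediate by applying the same construction at $\mathfrak{q}\in Y$ and using the hypothesis.

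First I would recast $J_\infty^{\mathfrak{p}}(X)$ via its functor of points. Combining the functorial description of $X_\infty$ in Section~\ref{sec:section_2} with $J_\infty^{\mathfrak{p}}(X)=J_\infty(X)\otimes_{J_0(X)}\kappa(\mathfrak{p})$, an $A$-valued point of $\Spec J_\infty^{\mathfrak{p}}(X)$ (for a $\kappa(\mathfrak{p})$-algebra $A$) is a $k$-algebra map $\gamma:\Gamma(X,\mathcal{O}_X)\to A[[t]]$ whose reduction modulo $t$ is the composition $\Gamma(X,\mathcal{O}_X)\twoheadrightarrow\kappa(\mathfrak{p})\to A$. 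The key step is the extension through the completion: since $\gamma(\mathfrak{m}_{\mathfrak{p}})\subseteq tA[[t]]$, multiplicativity gives $\gamma(\mathfrak{m}_{\mathfrak{p}}^n)\subseteq t^nA[[t]]$ for every $n$, so $\gamma$ is continuous for the $\mathfrak{m}_{\mathfrak{p}}$-adic topology and, as $A[[t]]$ is $t$-adically complete, factors uniquely as a continuous $k$-algebra map $\widehat{\gamma}:\widehat{\mathcal{O}}_{X,\mathfrak{p}}\to A[[t]]$. Conversely, every such continuous $\widehat{\gamma}$ whose mod-$t$ reduction is compatible with the given $\kappa(\mathfrak{p})$-structure on $A$ arises this way. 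Thus the functor, and hence its representing algebra, depends only on $\widehat{\mathcal{O}}_{X,\mathfrak{p}}$ as a $k$-algebra; noting that the assumed isomorphism forces $\kappa(\mathfrak{p})\cong\kappa(\mathfrak{q})$, one obtains an isomorphism $J_\infty^{\mathfrak{p}}(X)\cong J_\infty^{\mathfrak{q}}(Y)$ of $\kappa(\mathfrak{p})$-algebras.

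To match the weight gradings, I would appeal to Remark~\ref{rem:graded_structure}: the grading on $J_\infty(X)$ is induced by the $k^\times$-action coming from $\varphi_\lambda:t\mapsto\lambda t$ on $A[[t]]$. This action preserves the fiber over $\mathfrak{p}$, descends to $J_\infty^{\mathfrak{p}}(X)$, and is intrinsic to the functorial description through $\widehat{\mathcal{O}}_{X,\mathfrak{p}}$; hence it is transported by any isomorphism of complete local $k$-algebras, and the identification above becomes an isomorphism of graded algebras, yielding the claimed equality of Hilbert--Poincar\'e series. I expect the main obstacle to be the careful setup of the functor of points and the verification that $\gamma\mapsto\widehat{\gamma}$ is a bijection onto continuous maps with the correct $\kappa(\mathfrak{p})$-compatible reduction; once this bookkeeping is in place, the identification with the formal neighbourhood and the transfer of the grading are essentially formal.
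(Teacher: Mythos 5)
Your proof is correct and follows essentially the same route as the paper: factor each arc through the completion $\widehat{\mathcal{O}}_{X,\mathfrak{p}}$ and use the intrinsic $k^\times$-action of Remark~\ref{rem:graded_structure} to transport the grading. The only difference is that you test the functor of points on arbitrary $\kappa(\mathfrak{p})$-algebras $A$ rather than only on field extensions $K$ as the paper does, which is in fact slightly more careful, since $K$-valued points alone would not distinguish the possibly non-reduced scheme structure of the fiber.
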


\begin{proof}
The fiber of $\pi_X : X_{\infty} \to X$ over $\pe$ is a scheme over $\kappa(\mathfrak{p})$ whose $K$-rational points, for an extension field $K$ of $\kappa(\mathfrak{p})$, correspond to the set of morphisms of $k$-algebras
\[ \gamma: \Gamma(X,\mathcal{O}_X) \to K[[t]]          \]
such that $\gamma^{-1}\bigl((t)\bigr) = \mathfrak{p}$. Since $K[[t]]$ is complete, $\gamma$ factors uniquely through $\widehat{\mathcal{O}}_{X,\mathfrak{p}}$. We conclude that the fiber of $\pi_X : X_{\infty} \to X$ over $\pe$ is determined by $\widehat{\mathcal{O}}_{X,\mathfrak{p}}$. 

To see that the graded structure of the fibers of $\pi_X$ and $\pi_Y$ above $\mathfrak{p}$ respectively $\mathfrak{q}$ agree, we can use the intrinsic description of the graded structure from the previous remark.
\end{proof}

Next we compute the arc Hilbert-Poincar\'{e} series at a smooth point. We use the notation
\[  \mathbb{H} := \prod_{i\geq 1} \frac{1}{1-t^i}.    \]

\begin{Prop}\label{prop:smooth_case}
Let $X$ be an irreducible closed subscheme of $\mathbb{A}^n_k$ of dimension $d$ and let $\mathfrak{p}\in X$ be a smooth point. Then
\[  \HP_{J_{\infty}^{\mathfrak{p}}(X)}(t) = \mathbb{H}^{d}. \]
\end{Prop}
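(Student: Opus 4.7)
The plan is to reduce to the case $X=\mathbb{A}^d_k$ with $\mathfrak{p}=0$ and then perform a direct partition count.

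First, I would invoke Proposition~\ref{prop:series_is_analytic_invariant}. Since $\mathfrak{p}$ is a smooth closed point of an irreducible $d$-dimensional variety over the algebraically closed field $k$, the local ring $\mathcal{O}_{X,\mathfrak{p}}$ is regular of dimension $d$ with residue field $k$, so by the Cohen structure theorem
\[
\widehat{\mathcal{O}}_{X,\mathfrak{p}} \isom k[[z_1,\ldots,z_d]] \isom \widehat{\mathcal{O}}_{\mathbb{A}^d_k,0}.
\]
Hence $\mathfrak{p}\in X$ and $0\in \mathbb{A}^d_k$ are analytically isomorphic, and Proposition~\ref{prop:series_is_analytic_invariant} yields
\[
\HP_{J_\infty^{\mathfrak{p}}(X)}(t) = \HP_{J_\infty^0(\mathbb{A}^d_k)}(t).
\]

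Next I would compute the right-hand side by inspection. Because $\mathbb{A}^d_k$ is cut out by the zero ideal in $k[x_1,\ldots,x_d]$, there are no relations $f_i$, so from the description recorded just after Definition~\ref{def:arc_Hilbert} we have
\[
J_\infty^0(\mathbb{A}^d_k) = k[x_j^{(i)};\ 1\leq j\leq d,\ i\geq 1],
\]
a polynomial ring in countably many variables, graded by $\wt\, x_j^{(i)} = i$.

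Finally, I would read off its Hilbert-Poincar\'e series as a product. Viewing the algebra as the graded tensor product of the one-variable polynomial rings $k[x_j^{(i)}]$ (in each of which the variable has weight $i$), each factor contributes $\tfrac{1}{1-t^i}$; the product is well-defined because only finitely many variables have weight $\leq m$ for any $m$, so each graded piece is finite-dimensional. This gives
\[
\HP_{J_\infty^0(\mathbb{A}^d_k)}(t) = \prod_{j=1}^{d}\prod_{i\geq 1}\frac{1}{1-t^i} = \mathbb{H}^d,
\]
which can equivalently be interpreted as counting $d$-tuples of partitions of total weight $m$.

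There is no serious obstacle here: the content of the proposition is almost entirely packed into the analytic-invariance statement already proved, and what remains is the elementary generating-function calculation for a free graded polynomial algebra. The only mild point to watch is justifying that the infinite product makes sense, which is immediate from the grading being bounded below in each weight.
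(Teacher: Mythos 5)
Your overall strategy matches the paper's: reduce to affine space via Proposition~\ref{prop:series_is_analytic_invariant} and then compute the Hilbert--Poincar\'e series of the free graded polynomial algebra. Your final computation is actually slightly slicker than the paper's — you read off $\HP_{J_\infty^0(\mathbb{A}^d_k)}(t)=\mathbb{H}^d$ directly from the graded polynomial ring, whereas the paper goes through the jet algebras $J_m^{\mathfrak{q}}(\mathbb{A}^d_k)$, uses a regular-sequence argument (Lemma~\ref{lem:hilbert_trick_1}), and then appeals to Proposition~\ref{prop:approximate_by_jet_spaces}; both are valid, but the direct generating-function count you give is more elementary.

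However, there is a real gap in your reduction: you silently restrict to the case where $\mathfrak{p}$ is a \emph{closed} point, writing ``Since $\mathfrak{p}$ is a smooth closed point \ldots the local ring $\mathcal{O}_{X,\mathfrak{p}}$ is regular of dimension $d$ with residue field $k$.'' The proposition as stated allows $\mathfrak{p}$ to be \emph{any} smooth point, and indeed the paper's proof is written precisely to handle non-closed $\mathfrak{p}$. If $\mathfrak{p}$ is not closed, then $\kappa(\mathfrak{p})$ has positive transcendence degree $e$ over $k$, and $\dim\mathcal{O}_{X,\mathfrak{p}}=d-e$, not $d$. The paper handles this by showing $\widehat{\mathcal{O}}_{X,\mathfrak{p}}\cong\kappa(\mathfrak{p})[[x_1,\ldots,x_{d-e}]]$ and then identifying $\kappa(\mathfrak{p})$ with the residue field of an appropriate (non-closed) point $\mathfrak{q}$ of $\mathbb{A}^d_k$ via the primitive element theorem, so that Proposition~\ref{prop:series_is_analytic_invariant} still applies and reduces the problem to computing $\HP_{J_\infty^{\mathfrak{q}}(\mathbb{A}^d_k)}(t)$ at this point $\mathfrak{q}$. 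You need this extra step (or an explicit restriction of the statement to closed points) for your argument to cover what the proposition claims.
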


\begin{proof}
By definition $\mathcal{O}_{X,\mathfrak{p}}$ is a regular local ring, and hence an integral domain. Therefore $X$ is reduced, and thus an integral scheme.
Denote by $e$ the transcendence degree of $\kappa(\mathfrak{p})$ over $k$. From dimension theory (e.g.\ Thm.\ A and Cor.\ 13.4 on p.290 in \cite{eisenbud_commalg}) it follows that the dimension of $\mathcal{O}_{X,\mathfrak{p}}$ equals $d-e$. The complete local ring $\widehat{O}_{X,\mathfrak{p}}$ is regular as well and hence isomorphic to $\kappa(\mathfrak{p})[[x_1,\ldots , x_{d-e}]]$ (Prop.\ 10.16 in \cite{eisenbud_commalg}). By the theorem of the primitive element, $\kappa(\mathfrak{p})$ is isomorphic to $k(y_1,\ldots,y_e)[x]/(f)$, where we may assume $f\in k[y_1,\ldots,y_e,x]$. Hence $\kappa(\mathfrak{p})$ is isomorphic to the residue field of the point $(f,z_1,\ldots , z_{d-e-1})$ in the affine space 
\[ \Spec k[y_1,\ldots,y_e,x,z_1,\ldots, z_{d-e-1}].     \]
From Proposition~\ref{prop:series_is_analytic_invariant} it follows that it suffices to compute the arc Hilbert-Poincar\'{e} series at a point $\mathfrak{q}$ of $Y:=\mathbb{A}^{d}_{k}$. This is an easy task, since $J_{m}(Y)$ is the polynomial ring
\[ k[x_j^{(i)}; 1\leq j \leq d, 0\leq i\leq m],\] 
and $J_{m}^{\mathfrak{q}}(Y)$ equals
\[ \kappa(\mathfrak{q})[x_j^{(i)}; 1\leq j \leq d, 1\leq i\leq m].\] 
The variables form a regular sequence in this ring, and hence it follows easily from Lemma~\ref{lem:hilbert_trick_1} that
\[ \HP_{J_{m}^{\mathfrak{q}}(Y)}(t) = \left(\prod_{i=1}^m \frac{1}{1-t^i} \right)^{d}.\]
Now we use Proposition~\ref{prop:approximate_by_jet_spaces} to finish the proof.
\end{proof}

In Section~\ref{sec:section_4} we discuss the connection of this result with partitions. We leave the proof of the following proposition to the reader.

\begin{Prop}\label{prop:multiplicativity_for_products}
Let $X$ and $Y$ be closed subschemes of $\mathbb{A}^n_k$ respectively $\mathbb{A}^m_k$. Let $\mathfrak{p}\in X$ and $\mathfrak{q}\in Y$. Then
\[ \HP_{J_{\infty}^{(\mathfrak{p},\mathfrak{q})}(X\times Y)}(t) = \HP_{J_{\infty}^{\mathfrak{p}}(X)}(t)\cdot \HP_{J_{\infty}^{\mathfrak{q}}(Y)}(t).  \]
\end{Prop}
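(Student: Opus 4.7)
The plan is to reduce the statement to the standard multiplicativity of Hilbert-Poincar\'e series under tensor products of graded algebras, via the fact that arc spaces commute with products. I would proceed in four steps.

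First, I would invoke the functor of points description from Section~\ref{sec:section_2}. For every $k$-algebra $A$ one has
\[ (X\times Y)_\infty(A) = \Hom_k\!\bigl(\Spf A[[t]], X\times Y\bigr) \cong X_\infty(A) \times Y_\infty(A), \]
so there is a canonical isomorphism $(X\times Y)_\infty \cong X_\infty \times_k Y_\infty$ of $k$-schemes. Passing to coordinate algebras yields $J_\infty(X\times Y) \cong J_\infty(X) \otimes_k J_\infty(Y)$.

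Second, I would check that this isomorphism respects the weight grading. Here Remark~\ref{rem:graded_structure} is the right tool: the grading arises from the $k^\times$-action induced by $t\mapsto \lambda t$ on the target $K[[t]]$, and since the same $t$ governs both factors of the product, the isomorphism above is $k^\times$-equivariant for the diagonal action on the right-hand side. This identifies the weight grading on $J_\infty(X\times Y)$ with the standard tensor product grading.

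Third, I would pass to the scheme-theoretic fiber over $(\mathfrak{p},\mathfrak{q})$. The identification $J_0(X\times Y) \cong J_0(X) \otimes_k J_0(Y)$ sends the maximal ideal of $(\mathfrak{p},\mathfrak{q})$ to the one generated by the maximal ideals of $\mathfrak{p}$ and $\mathfrak{q}$, and under the standing assumption that $k$ is algebraically closed (so $\kappa(\mathfrak{p}) = \kappa(\mathfrak{q}) = \kappa(\mathfrak{p},\mathfrak{q}) = k$ for closed points), tensoring down gives
\[ J_\infty^{(\mathfrak{p},\mathfrak{q})}(X\times Y) \cong J_\infty^\mathfrak{p}(X) \otimes_k J_\infty^\mathfrak{q}(Y) \]
as graded $k$-algebras. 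Then I would invoke the classical multiplicativity $\HP_{A\otimes_k B}(t) = \HP_A(t)\cdot\HP_B(t)$ for non-negatively graded $k$-algebras with finite-dimensional weight pieces, a basic fact recalled in the appendix, to conclude.

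The main obstacle is really the second step: checking that an abstract isomorphism of schemes promotes to an isomorphism of graded algebras. The intrinsic $k^\times$-action viewpoint makes this transparent, but working directly with the variables $x_j^{(i)}$ one would have to verify explicitly that the coordinates of the product jet scheme have the expected weights, which is routine but notationally heavy. A secondary (very minor) subtlety is handling the residue fields cleanly if one wished to allow non-closed points $\mathfrak{p},\mathfrak{q}$, where $\kappa(\mathfrak{p})\otimes_k \kappa(\mathfrak{q})$ need not be a field; in the closed-point setting emphasized throughout the paper this issue does not arise.
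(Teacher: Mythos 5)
The paper itself contains no proof of this proposition --- it is explicitly left to the reader --- so there is nothing to compare against; the question is only whether your argument is sound, and it is.

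Your four-step reduction is the natural one and all the steps check out. Step 1 (arc spaces commute with products via the functor of points) and Step 2 (the $k^\times$-action from Remark~\ref{rem:graded_structure} makes the isomorphism graded) are correct. In Step 3 the key tensor-algebra identity
\[
\bigl(J_\infty(X)\otimes_k J_\infty(Y)\bigr)\otimes_{J_0(X)\otimes_k J_0(Y)}\bigl(\kappa(\mathfrak{p})\otimes_k\kappa(\mathfrak{q})\bigr)\;\cong\;\bigl(J_\infty(X)\otimes_{J_0(X)}\kappa(\mathfrak{p})\bigr)\otimes_k\bigl(J_\infty(Y)\otimes_{J_0(Y)}\kappa(\mathfrak{q})\bigr)
\]
is exactly right, and since the paper assumes $k$ algebraically closed and works with closed points, $\kappa(\mathfrak{p})\otimes_k\kappa(\mathfrak{q})=k$ so $(\mathfrak{p},\mathfrak{q})$ is an honest point and the residue-field subtlety you flag does not arise. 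The final step --- multiplicativity of Hilbert--Poincar\'e series under $\otimes_k$ --- is a standard convolution identity; note, though, that it is not actually stated in the paper's appendix, so it must be supplied. Two small remarks: (a) the equivariance in Step 2 is even more elementary than you suggest, since one can see it at the level of coordinates directly --- the defining equations of $(X\times Y)_\infty$ in $k[x_j^{(i)};1\leq j\leq n+m]$ split into the $X$-equations in the first $n$ blocks of variables and the $Y$-equations in the last $m$, with no mixing, giving $J_\infty(X\times Y)\cong J_\infty(X)\otimes_k J_\infty(Y)$ visibly as weight-graded algebras; (b) one should observe that the weight-graded pieces of the focussed arc algebras are finite-dimensional (which holds because each weight involves only finitely many variables $x_j^{(i)}$), so that the Hilbert--Poincar\'e series are defined and the convolution identity applies. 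With those clarifications the argument is complete.
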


The multiplicity of a singular point on a hypersurface can be easily read from the arc Hilbert-Poincar\'{e} series, as the reader may convince himself of:

\begin{Prop}\label{prop:multiplicity_from_HPseries}
Let $X$ be a hypersurface in $\mathbb{A}^n_k$ defined by a polynomial $F\in k[x_1,\ldots,x_n]$ with $F(0)=0$. Then $X$ has multiplicity $r$ at the origin if and only if $r$ is the maximal number such that
\[  \tau_{\leq r-1} \HP_{J_{\infty}^{0}(X)}(t) = \tau_{\leq r-1} \mathbb{H}^n. \]
Moreover, $\tau_{\leq r} \HP_{J_{\infty}^{0}(X)}(t) = \tau_{\leq r} \mathbb{H}^n - t^r$.
\end{Prop}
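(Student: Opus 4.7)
The plan is to read off the initial weights and weight-$r$ behaviour of the focussed arc algebra directly from the Taylor expansion of $F$. If $X$ has multiplicity $r$ at the origin, write $F = F_r + F_{r+1} + \cdots$ with $F_j$ homogeneous of degree $j$ in $x_1,\ldots,x_n$; by definition $F_r \neq 0$ and $F_0 = \cdots = F_{r-1} = 0$. The $f_i$ are the coefficients of $t^i$ in $F(x_1^{(1)}t + x_1^{(2)}t^2 + \cdots, \ldots, x_n^{(1)}t + x_n^{(2)}t^2 + \cdots)$; since each homogeneous piece $F_j(x(t))$ has minimal $t$-degree $j$, one obtains $f_0 = \cdots = f_{r-1} = 0$ and $f_r = F_r(x_1^{(1)},\ldots,x_n^{(1)})$, which is nonzero because $F_r$ is.

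Next I would analyse the ideal $I = (f_0, f_1, \ldots)$ weight by weight. For $i < r$ we have $I_i = 0$, so the weight-$i$ part of $J_\infty^0(X)$ agrees with that of the free polynomial ring $k[x_j^{(i)}; 1 \leq j \leq n, i \geq 1]$, whose Hilbert--Poincar\'e series is $\mathbb{H}^n$; summing gives $\tau_{\leq r-1}\HP_{J_\infty^0(X)}(t) = \tau_{\leq r-1}\mathbb{H}^n$. In weight $r$, any element of $I_r$ has the form $\sum_{k \geq 0} g_k f_{r+k}$ with $g_k$ of weight $-k$, which forces $g_k = 0$ for $k \geq 1$ and $g_0 \in k$; hence $I_r = k \cdot f_r$ is one-dimensional. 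Therefore
\[ \tau_{\leq r}\HP_{J_\infty^0(X)}(t) = \tau_{\leq r}\mathbb{H}^n - t^r, \]
which proves the second assertion together with the "only if" direction of the first.

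For the converse, let $r$ denote the maximum integer such that $\tau_{\leq r-1}\HP_{J_\infty^0(X)}(t) = \tau_{\leq r-1}\mathbb{H}^n$ and let $r'$ be the true multiplicity of $X$ at the origin. The computation above applied to $r'$ gives equality of truncations up to weight $r' - 1$ and a strict drop (by $t^{r'}$) at weight $r'$; maximality of $r$ then forces $r = r'$. The only point with any bite is the identification $I_r = k \cdot f_r$, but this is immediate from the grading since each remaining generator $f_{r+k}$ ($k \geq 1$) lives in weight strictly above $r$; no real obstacle arises.
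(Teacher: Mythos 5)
The paper gives no proof of this proposition (it is left to the reader with ``as the reader may convince himself of''), so there is no paper argument to compare against; your write-up is the expected filling-in. The proof is correct: identifying $f_i$ (up to the nonzero characteristic-zero factorials) with the coefficient of $t^i$ in $F(x_1^{(1)}t + x_1^{(2)}t^2 + \cdots, \ldots)$, you correctly get $f_i = 0$ for $i<r$ and $f_r = F_r(x_1^{(1)},\ldots,x_n^{(1)}) \neq 0$, and the weight grading forces $I_i = 0$ for $i<r$ and $I_r = k\cdot f_r$, giving both the truncated equality and the $-t^r$ defect. The only stylistic remark is that you should say ``up to nonzero scalar'' when equating $f_i$ with a Taylor coefficient of $F(x(t))$, since the paper defines $f_i$ via iterated application of the derivation $D$ rather than as a plain coefficient; in characteristic zero these differ only by $i!$ and the ideal, hence the Hilbert--Poincar\'e series, is unaffected.
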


Next we derive a formula for the arc Hilbert-Poincar\'{e} series of the focussed arc algebra at a canonical hypersurface singularity of maximal multiplicity. First we recall the definition of a canonical singularity. Let $X$ be a normal variety. Assume that $X$ is $\mathbb{Q}$-Gorenstein (i.e.\ $rK_X$ is Cartier for some $r\geq 1$). Let $f: Y\to X$ be a log resolution. This means that $f$ is a proper birational morphism from a smooth variety $Y$ such that the exceptional locus is a simple normal crossings divisor with irreducible components $E_i, i\in I$. We have a linear equivalence
\[  K_Y = f^*K_X + \sum_i a_i E_i \] 
for uniquely determined $a_i\in \mathbb{Q}$ (these are called discrepancy coefficients). Then $X$ has \textit{canonical singularities} if $a_i \geq 0$ for all $i$. We say that $X$ has a canonical singularity at a point $\mathfrak{p}\in X$ if there exists a neighbourhood $U$ of $\mathfrak{p}$ in $X$ with canonical singularities. Note that if $X$ is a hypersurface in $\mathbb{A}^n_k$, then $X$ is Gorenstein (i.e.\ $K_X$ is Cartier) and all $a_i$ are then integers. In that case, if $X$ has a canonical singularity at a closed point $\mathfrak{p}$, the multiplicity of $\mathfrak{p}$ is at most the dimension of $X$. This follows by computing the discrepancy coefficient of the exceptional divisor of the blowing-up in $\mathfrak{p}$. 

\begin{Prop}\label{prop:canonical_sing}
Let $X$ be a normal hypersurface in $\mathbb{A}^n_k$ with a canonical singularity of multiplicity $n-1$ at the origin. Then 
\[ \HP_{J_{\infty}^{0}(X)}(t) = \left(\prod_{i=1}^{n-2} \frac{1}{1-t^i}\right)^{n} \left(\prod_{i\geq n-1} \frac{1}{1-t^i}\right)^{n-1} .\]
\end{Prop}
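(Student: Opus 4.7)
The plan is to recognise the claimed series as the output of a regular-sequence Hilbert series computation. Rearranging,
\[ \left(\prod_{i=1}^{n-2} \frac{1}{1-t^i}\right)^{n} \left(\prod_{i\geq n-1} \frac{1}{1-t^i}\right)^{n-1} = \mathbb{H}^n \cdot \prod_{i \geq n-1}(1-t^i), \]
and since $\mathbb{H}^n$ is the Hilbert-Poincar\'e series of the graded polynomial ring $R = k[x_j^{(i)};\, 1\leq j\leq n,\, i\geq 1]$ in which each $f_i$ has weight $i$, the identity reduces via the appendix lemma on graded regular-sequence quotients to showing that $f_{n-1}, f_n, f_{n+1},\ldots$ is a regular sequence in $R$. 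Here we use that $f_0,\ldots,f_{n-2}$ vanish identically: since $F$ has multiplicity $n-1$ at the origin, substituting an arc without constant term into $F$ yields a power series of $t$-order at least $n-1$.

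I would check the regular-sequence property truncation-by-truncation: for each $m\geq n-1$, verify that $f_{n-1},\ldots,f_m$ is a regular sequence in $R_m := k[x_j^{(i)};\, 1\leq j\leq n,\, 1\leq i\leq m]$. Because $R_m$ is Cohen--Macaulay and the $f_i$ are homogeneous, this is equivalent to the codimension bound $\dim X_m^0 \leq m(n-1)+(n-2)$, where $X_m^0 = V(f_{n-1},\ldots,f_m)\subset \mathbb{A}^{mn}$ denotes the focussed $m$-th jet scheme. The geometric input is the canonical + maximal-multiplicity hypothesis, which forces the blow-up $\pi\colon \widetilde{X}\to X$ of the origin to be crepant; in particular the projectivized tangent cone $E=\{F_{n-1}=0\}\subset\mathbb{P}^{n-1}$ is smooth of dimension $n-2$ and $\widetilde{X}$ is smooth along $E\cong\pi^{-1}(0)$. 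By Proposition~\ref{prop:series_is_analytic_invariant} we are also free to replace $F$ by any convenient analytic normal form.

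The dimension bound on $X_m^0$ can then be obtained via the natural rational map $X_m^0\to E$ sending an $m$-jet $(x_1(t),\ldots,x_n(t))$ to the projective class $[x_1^{(1)}:\cdots:x_n^{(1)}]$, which is defined on the open locus where not all $x_j^{(1)}$ vanish and lands in $E$ since the first relation reads $f_{n-1}=F_{n-1}(x^{(1)})$. On this open stratum the valuative criterion applied to the blow-up lifts each $m$-jet uniquely to an $m$-jet on the smooth variety $\widetilde{X}$ rooted at a point of $E$; Proposition~\ref{prop:smooth_case} then identifies the fibre dimension as $m(n-1)$ and gives total dimension exactly $m(n-1)+(n-2)$. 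The closed complement, consisting of jets vanishing to order $\geq 2$ in $t$, can after the substitution $x(t)=ty(t)$ be expressed in terms of a similar but smaller problem and has strictly smaller dimension, so it does not affect the upper bound. The main obstacle is carrying out the lifting in a way compatible with the \emph{weight} grading, since the blow-up coordinates naturally rescale, and in making the closed-stratum estimate truly effective; once the codimension equality holds at every level $m$, passing $m\to\infty$ in the regular-sequence Hilbert-Poincar\'e formula from the appendix yields the claimed series.
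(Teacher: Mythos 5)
Your overall strategy is the same as the paper's: reduce the Hilbert--Poincar\'e series computation, via the appendix lemma on regular sequences and Proposition~\ref{prop:approximate_by_jet_spaces}, to showing that $f_{n-1},\ldots,f_m$ is a regular sequence in the Cohen--Macaulay polynomial ring at every finite level $m$, and then establish this by a codimension (equivalently, dimension) bound on the focussed jet scheme. That part is exactly what the paper does, using the unmixedness theorem to turn the codimension equality into the regular-sequence conclusion.

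The gap is in how you justify the dimension bound. Your key geometric claim --- that a canonical hypersurface singularity of maximal multiplicity has crepant blow-up \emph{with $\widetilde{X}$ smooth along the exceptional locus and with $E$ a smooth projective hypersurface} --- is false. Crepancy of the blow-up (discrepancy $0$) is correct, but it does not imply smoothness of the strict transform or of the tangent cone. A standard counterexample already in the paper's scope is the $A_k$ surface singularity $x^2+y^2+z^{k+1}=0$ for $k\geq 2$: this is canonical (a rational double point) of multiplicity $2=n-1$, yet the projectivized tangent cone $\{x^2+y^2=0\}\subset\mathbb{P}^2$ is a pair of lines, not a smooth conic, and for $k\geq 3$ the strict transform $\widetilde{X}$ still has an $A_{k-2}$ singularity on the exceptional fiber. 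So the lifting/fibration-over-$E$ argument, which needs $\widetilde{X}$ and $E$ smooth to invoke Proposition~\ref{prop:smooth_case} for the fibre dimension, breaks down. In addition, the ``smaller similar problem'' reduction for the closed stratum of jets of higher order is left unsubstantiated --- it is not a scaled copy of the original problem unless $F$ is homogeneous.

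What is missing is the ingredient the paper actually uses: the theorem of Ein and Musta\c{t}\u{a} characterizing canonical singularities of locally complete intersections by \emph{irreducibility of all jet schemes}. With that, one gets $\dim X_m=(n-1)(m+1)$ from local triviality over the smooth locus, and irreducibility forces $\dim\pi_m^{-1}(0)\leq (n-1)(m+1)-1$, which is precisely your bound $m(n-1)+(n-2)$ --- no blow-up geometry, no stratification, and no smoothness assumption on the tangent cone is needed. So you have the right skeleton, but the geometric engine you plug in does not run; replace it with irreducibility of jet schemes.
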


\begin{proof}
Let $X$ be defined by the polynomial $F$. We use the notations $F_i$ and $f_i$ as before. Then $f_i = 0$ for $0\leq i \leq n-2$. To deduce the result, it suffices to show that for every $m\geq n-1$ the polynomials $f_{n-1}, f_n, \ldots , f_m$ form a regular sequence in the polynomial ring $k[x_j^{(i)};1\leq j \leq n, 1\leq i\leq m]$, in view of Lemma~\ref{lem:hilbert_trick_1} and Proposition~\ref{prop:approximate_by_jet_spaces}. 

Since the question is local, we may assume that all singularities of $X$ are canonical. We will use a theorem by Ein and Musta\c{t}\u{a} that characterizes canonical singularities by the fact that their jet spaces are irreducible (see Thm.\ 1.3 in \cite{ein_mustata}). It is well known that the natural maps $\pi_m : X_m \to X$ are locally trivial fibrations above the smooth part of $X$, with fiber isomorphic to $\mathbb{A}_k^{(n-1)m}$. Hence the dimension of $X_m$ is precisely $(n-1)(m+1)$. Since $X_m$ is irreducible, it follows that the fiber $\pi_m^{-1}(0)$ has dimension at most $(n-1)(m+1)-1$. From dimension theory (e.g.\ Cor.\ 13.4 in \cite{eisenbud_commalg}) we deduce that the codimension of the ideal $I_m^{0}:=(f_{n-1}, f_n, \ldots , f_m)$ in $A_m:=k[x_j^{(i)};1\leq j \leq n, 1\leq i\leq m]$ is at least $nm - \bigl((n-1)(m+1) - 1\bigr) = m-n+2$. From the principal ideal theorem (Thm.\ 10.2 of \cite{eisenbud_commalg}) we get then that the codimension of $I_m^{0}$ is precisely $m-n+2$. Since a polynomial ring over a field is Cohen-Macaulay, we may apply the unmixedness theorem (Cor.\ 18.14 of \cite{eisenbud_commalg}) to deduce that every associated prime of $I_m^{0}$ is minimal. But the codimension of $I_{m+1}^{0}$ in $A_{m+1}$ is at least $m-n+3$, so this means that $f_{m+1}$ is not contained in any minimal prime ideal containing $I_m^{0}$, considered as ideal of $A_{m+1}$. Hence $f_{m+1}$ does not belong to an associated prime ideal of $I_m^{0}$, and thus it is a nonzerodivisor modulo $I_m^{0}$ (see Thm.\ 3.1(b) of \cite{eisenbud_commalg}).
\end{proof}

It follows that the arc Hilbert-Poincar\'{e} series is in this case completely determined by the multiplicity. As a corollary, we get the following nice example. This result was obtained by explicit computation in \cite{mourtada_thesis}.

\begin{Cor}\label{cor:rational_double_point}
If $X$ is a surface with a rational double point at $\mathfrak{p}$ then 
\[ \HP_{J_{\infty}^{\mathfrak{p}}(X)}(t) =  \left(\frac{1}{1-t}\right)^{3} \left(\prod_{i\geq 2} \frac{1}{1-t^i}\right)^2.\]
\end{Cor}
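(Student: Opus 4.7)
The plan is to reduce the statement to a direct application of Proposition~\ref{prop:canonical_sing}. The only content here, beyond classical facts about rational double points, is matching the hypotheses.

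First, I would recall the classical characterization of rational double points on surfaces (the Du Val / ADE singularities): up to analytic isomorphism, a rational double point is given by one of the ADE equations in $\mathbb{A}^3_k$, i.e.\ it is analytically a hypersurface singularity. Moreover it is normal (as a two-dimensional hypersurface singularity satisfying Serre's condition $R_1$), it has multiplicity exactly $2$, and it is canonical (this is essentially the defining property of Du Val singularities: they are precisely the canonical surface singularities, with all discrepancy coefficients $a_i = 0$).

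Next, by Proposition~\ref{prop:series_is_analytic_invariant}, the arc Hilbert-Poincar\'{e} series at $\mathfrak{p}$ depends only on $\widehat{\mathcal{O}}_{X,\mathfrak{p}}$, so I may replace $(X,\mathfrak{p})$ by the corresponding ADE hypersurface in $\mathbb{A}^3_k$ at the origin. This hypersurface satisfies the hypotheses of Proposition~\ref{prop:canonical_sing} with $n = 3$: it is a normal hypersurface in $\mathbb{A}^3_k$ with a canonical singularity at the origin of multiplicity $n-1 = 2$.

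Applying Proposition~\ref{prop:canonical_sing} directly, with $n=3$, yields
\[ \HP_{J_{\infty}^{0}(X)}(t) \;=\; \left(\prod_{i=1}^{1} \frac{1}{1-t^i}\right)^{3} \left(\prod_{i\geq 2} \frac{1}{1-t^i}\right)^{2} \;=\; \left(\frac{1}{1-t}\right)^{3}\left(\prod_{i\geq 2}\frac{1}{1-t^i}\right)^{2}, \]
which is exactly the claimed formula. The only genuine work, and hence the main potential obstacle, is citing the classification results cleanly enough to justify that any rational double point on a surface meets the three hypotheses (normal hypersurface in $\mathbb{A}^3$, canonical, multiplicity equal to $\dim X + 1 - 1 = n - 1$); once these are in hand the proof is a one-line specialization of the previous proposition.
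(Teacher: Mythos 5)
Your proposal is correct and matches the paper's own reasoning exactly: the paper derives this corollary as an immediate specialization of Proposition~\ref{prop:canonical_sing}, using precisely the classical facts that a rational double point on a surface is analytically an ADE hypersurface singularity in $\mathbb{A}^3_k$, normal, canonical, and of multiplicity $2$. Your explicit invocation of Proposition~\ref{prop:series_is_analytic_invariant} to reduce to the ADE hypersurface at the origin is the right way to make the implicit step precise.
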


A similar result is true for normal crossings singularities. A scheme $X$ of finite type over $k$ of dimension $d$ is said to have \textit{normal crossings} at a point $\mathfrak{p}$ if $\mathfrak{p}\in X$ is analytically isomorphic to a point $\mathfrak{q}\in Y$, where $Y$ is the hypersurface in $\mathbb{A}_k^{d+1}$ defined by $y_1\cdots y_{d+1} = 0$. For points on $Y$ the situation is as follows.

\begin{Prop}\label{prop:normal_crossings}
Let $Y$ be as above, and assume that $\mathfrak{q}$ lies precisely on the irreducible components given by $y_1=0,\ldots , y_e=0$. Then 
\[ \HP_{J_{\infty}^{\mathfrak{q}}(Y)}(t) =  \left(\prod_{i=1}^{e-1} \frac{1}{1-t^i}\right)^{d+1} \left(\prod_{i\geq e} \frac{1}{1-t^i}\right)^d .\]
\end{Prop}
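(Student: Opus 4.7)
The strategy is to mirror Proposition~\ref{prop:canonical_sing}: first reduce to a pure normal crossings hypersurface at the origin via analytic invariance and multiplicativity, then exhibit a regular sequence and read off the series from Lemma~\ref{lem:hilbert_trick_1} after letting the truncation parameter go to infinity.

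\textbf{Reduction.} Near $\mathfrak{q}$ the coordinates $y_{e+1},\ldots,y_{d+1}$ are units, so after translating $\mathfrak{q}$ to the origin one gets a factorisation $y_{1}\cdots y_{d+1}=y_{1}\cdots y_{e}\cdot u$ with $u$ a unit in $\widehat{\mathcal{O}}_{\mathbb{A}^{d+1}_{k},\mathfrak{q}}$. This identifies $(Y,\mathfrak{q})$ as analytically isomorphic to the origin of $Z\times\mathbb{A}^{d+1-e}_{k}$, where $Z\subset\mathbb{A}^{e}_{k}$ is the hypersurface $y_{1}\cdots y_{e}=0$. By Propositions~\ref{prop:series_is_analytic_invariant}, \ref{prop:multiplicativity_for_products} and \ref{prop:smooth_case}, it then suffices to show
\[
\HP_{J_{\infty}^{0}(Z)}(t)=\mathbb{H}^{e}\cdot\prod_{k\geq e}(1-t^{k}),
\]
since multiplying the right hand side by $\mathbb{H}^{d+1-e}$ reproduces the stated formula.

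\textbf{Regular sequence.} For $Z$ one has $f_{k}=0$ when $k<e$ and, for $k\geq e$,
\[
f_{k}=\sum_{\substack{k_{1}+\cdots+k_{e}=k\\ k_{j}\geq 1}}y_{1}^{(k_{1})}\cdots y_{e}^{(k_{e})}.
\]
The key claim is that $f_{e},f_{e+1},\ldots,f_{m}$ is a regular sequence in $R_{m}:=k[y_{j}^{(i)};\,1\leq j\leq e,\,1\leq i\leq m]$ for every $m\geq e-1$; granting this, Lemma~\ref{lem:hilbert_trick_1} gives $\HP_{J_{m}^{0}(Z)}(t)=\prod_{i=1}^{m}(1-t^{i})^{-e}\cdot\prod_{k=e}^{m}(1-t^{k})$, and passing to the limit with Proposition~\ref{prop:approximate_by_jet_spaces} produces the displayed series. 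Following the canonical-singularity argument, I reduce regularity to the codimension equality $\codim_{R_{m}}(f_{e},\ldots,f_{m})=m-e+1$. A $\bar{k}$-point of $Z_{m}^{0}$ is a tuple $y_{j}(t)=\sum_{i=1}^{m}y_{j}^{(i)}t^{i}$ with $\prod_{j}y_{j}(t)\equiv 0\pmod{t^{m+1}}$. Stratifying by the orders $a_{j}:=\ord\,y_{j}(t)\in\{1,\ldots,m+1\}$ (where $a_{j}=m+1$ signifies $y_{j}\equiv 0$) writes $Z_{m}^{0}$ as the union, over tuples with $\sum a_{j}\geq m+1$, of the closed loci on which $\ord\,y_{j}(t)\geq a_{j}$ for every $j$, each of dimension $\sum_{j}(m+1-a_{j})$. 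The maximum $(e-1)(m+1)$ is achieved exactly when $\sum a_{j}=m+1$, so $\codim\geq m-e+1$; Krull's principal ideal theorem yields the reverse inequality.

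\textbf{Lifting and conclusion.} Since $R_{m}$ is Cohen--Macaulay, the unmixedness theorem forces every associated prime of $(f_{e},\ldots,f_{m})$ to have height $m-e+1$. If $f_{m+1}$ lay in any such prime (viewed in $R_{m+1}$), the analogous calculation applied to $Z_{m+1}^{0}$ would force $\codim_{R_{m+1}}(f_{e},\ldots,f_{m+1})\leq m-e+1$, contradicting the bound $m-e+2$ that the same dimension count provides. Hence $f_{m+1}$ is a nonzerodivisor modulo the preceding ideal, and induction completes the proof. The only non-routine ingredient is the combinatorial dimension count yielding the maximum $(e-1)(m+1)$; once this is in place, the rest is a direct adaptation of the argument for canonical singularities.
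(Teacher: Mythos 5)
Your proof is correct and follows the same overall skeleton as the paper: reduce via analytic invariance and multiplicativity to the cone $Z\colon z_1\cdots z_e=0$ in $\mathbb{A}^e_k$, show that $f_e,\ldots,f_m$ form a regular sequence in $R_m$, and conclude with Lemma~\ref{lem:hilbert_trick_1} and Proposition~\ref{prop:approximate_by_jet_spaces}. The one genuinely different ingredient is how you obtain the dimension bound $\dim Z_m^0\leq (e-1)(m+1)$. The paper cites Theorem~2.2 of Goward--Smith on the jet schemes of a normal crossings divisor (equidimensionality of $Z_m$ together with the existence of maximal-dimensional components in the fiber over the origin), whereas you derive the bound directly by stratifying $Z_m^0$ by the orders $a_j=\ord\,y_j(t)$, noting that the closed locus $\{\ord\,y_j\geq a_j\;\forall j\}$ is a coordinate subspace of dimension $e(m+1)-\sum a_j$ and that the defining condition $\prod_j y_j(t)\equiv 0\pmod{t^{m+1}}$ is exactly $\sum a_j\geq m+1$. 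This is more elementary and self-contained, and it proves precisely what the regular-sequence argument requires (the fiber over the origin rather than the whole jet scheme); from there the principal ideal theorem, Cohen--Macaulay unmixedness, and the lifting of $f_{m+1}$ out of the associated primes of $(f_e,\ldots,f_m)$ are an accurate adaptation of the argument of Proposition~\ref{prop:canonical_sing}, just as the paper does.
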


\begin{proof}
Locally at $\mathfrak{q}$, the variety $Y$ looks like a product of the hypersurface $Z$ given by $z_1\cdots z_e =0$ in $\mathbb{A}^e_k$ and the affine space $\mathbb{A}^{d+1-e}_k$. By Propositions~\ref{prop:multiplicativity_for_products} and \ref{prop:smooth_case} it suffices now to show that 
\begin{equation}\label{formula_normal_crossings}  \HP_{J_{\infty}^{0}(Z)}(t)  =  \left(\prod_{i=1}^{e-1} \frac{1}{1-t^i}\right)^{e} \left(\prod_{i\geq e} \frac{1}{1-t^i}\right)^{e-1}.  \end{equation}
According to Theorem 2.2 of \cite{goward_smith}, the $m$th jet scheme $Z_m$ is equidimensional of dimension $(e-1)(m+1)$, and for $m+1\geq e$ there are actually irreducible components of that dimension in the fiber of $Z_m$ above the origin in $Z$. We can use a reasoning as in the proof of Proposition~\ref{prop:canonical_sing} to conclude that the $m-e+1$ equations $f_e,f_{e+1} , \ldots , f_m$ form a regular sequence in $k[z_j^{(i)}; 1\leq j \leq e, 1\leq i\leq m]$ and then we use Lemma~\ref{lem:hilbert_trick_1} once more to deduce formula~(\ref{formula_normal_crossings}).
\end{proof}

\section{Partitions and the Rogers-Ramanujan identities}\label{sec:section_4}

A {\it partition (of length $r$)} of a postive integer $n$ is a
non-decreasing sequence $\lambda=(\lambda_1,\ldots, \lambda_r)$ of
positive integers $\lambda_i$, $1\leq i \leq r$, such that $$\lambda_1+
\cdots + \lambda_r =n.$$ The integers $\lambda_i$ are called the {\it
  parts of the partition $\lambda$}. We will denote the number of partitions of
$n$ by $p(n)$, with $p(0):=1$. In the
following we collect a few facts about integer partitions which will
be used in the subsequent sections. For an introduction to this topic
we refer for example to \cite{wilf_lecturesonis}; an extensive
treatment can be found in \cite{andrews_partitions}.

\begin{Prop}\label{prop:generatingfunction_partitions}
  The generating series of the partition function $p$ has the
  following infinite product representation: $$\sum_{i=0}^\infty p(n)t^n = \prod_{i\geq
    1}\frac{1}{1-t^i}.$$
\end{Prop}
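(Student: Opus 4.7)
The plan is to prove the identity by expanding the infinite product as a formal power series in $t$ and matching the coefficient of $t^n$ on each side to the number of partitions of $n$. First, I would point out that the infinite product is a well-defined element of $k[[t]]$: each factor is of the form $1/(1-t^i) = 1 + t^i + t^{2i} + \cdots$, which has constant term $1$, so for every fixed $n$ only the factors with $i \le n$ can contribute nontrivially to the coefficient of $t^n$, and the partial products stabilize in the $t$-adic topology. This reduces the claim to a statement about formal manipulation of power series.

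Next, I would expand each factor as a geometric series,
\[
\prod_{i \ge 1} \frac{1}{1-t^i} \;=\; \prod_{i \ge 1} \sum_{k_i \ge 0} t^{i k_i},
\]
and distribute. A monomial in the expansion is obtained by choosing, for each $i \ge 1$, a nonnegative integer $k_i$, with only finitely many $k_i$ nonzero, and taking the product $\prod_{i \ge 1} t^{i k_i} = t^{\sum_i i k_i}$. Hence the coefficient of $t^n$ in the product equals the number of sequences $(k_i)_{i \ge 1}$ of nonnegative integers, almost all zero, satisfying $\sum_{i \ge 1} i k_i = n$.

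I would then observe that such sequences are in natural bijection with partitions of $n$: given $(k_i)$, form the partition having exactly $k_i$ parts equal to $i$ for each $i$; conversely, any partition determines such a sequence by setting $k_i$ equal to the multiplicity of $i$ among its parts. Under this bijection, the constraint $\sum_i i k_i = n$ is precisely the condition that the parts sum to $n$. Therefore the coefficient of $t^n$ in the product is $p(n)$, and the identity follows.

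The argument has no real obstacle; the only point requiring a little care is the formal-series justification of the infinite product, which I would handle once at the outset via the $t$-adic topology remark above. Everything else is the standard combinatorial translation between a product of geometric series and partitions encoded by part-multiplicities.
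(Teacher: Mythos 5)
Your argument is correct and is the standard proof of Euler's partition generating function: expand each factor $1/(1-t^i)$ as a geometric series, observe that the $t$-adic topology makes the infinite product converge, and identify the coefficient of $t^n$ with the count of multiplicity sequences $(k_i)$ satisfying $\sum_i i k_i = n$, which biject with partitions of $n$. The paper itself states this proposition without proof, citing it as classical (with references to Wilf and Andrews), so there is no in-paper argument to compare against; your proof fills that gap with exactly the textbook reasoning and has no issues.
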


Note, that this is precisely the series $\Hh$ which we have obtained as the
Hilbert-Poincar\'{e} series of the graded algebra $k[y_1, y_2,
\ldots]$ where the grading is given by $\wt\, y_i =i$. More generally,
the arc Hilbert-Poincar\'{e} series of an $d$-dimensional variety at a smooth point was given by
$\Hh^d$. This leads us to expect a connection between the
Hilbert-Poincar\'{e} series of arc algebras and partitions.\\

The following result is known in the literature as the {\it (first)
 Rogers-Ramanujan identity}. For a classical proof and an account of
its history, see Chpt.\ 7 of \cite{andrews_partitions}.

\begin{Thm}[Rogers-Ramanujan identity]\label{thm:rogersramanujan}
  The number of partitions of $n$ into parts congruent to 1 or 4 modulo 5
  is equal to the number of partitions of $n$ into parts that are neither
  repeated nor consecutive.
\end{Thm}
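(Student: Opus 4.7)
The plan is to prove this classical identity by expressing both partition counts as coefficients of explicit power series in $t$ and then verifying the two series coincide.

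First I would convert the right-hand count to an Euler-type product. By the restricted-parts version of Proposition~\ref{prop:generatingfunction_partitions}, the generating series for partitions whose parts are all $\equiv 1$ or $4 \pmod{5}$ is
\[ \prod_{i\geq 0}\frac{1}{(1-t^{5i+1})(1-t^{5i+4})}. \]
For the left-hand count, I would use the classical staircase bijection: a partition $\mu_1 < \mu_2 < \cdots < \mu_r$ with $\mu_{j+1}-\mu_j \geq 2$ and $\sum \mu_j = n$ corresponds, via coordinate-wise subtraction of the staircase $(1,3,\ldots,2r-1)$ of total weight $r^2$, to an arbitrary partition $\nu$ of $n-r^2$ with at most $r$ parts. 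Since the generating series for partitions into at most $r$ parts is $\prod_{j=1}^r(1-t^j)^{-1}$, summing over $r$ yields
\[ \sum_{r\geq 0}\frac{t^{r^2}}{\prod_{j=1}^r(1-t^j)}. \]

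The theorem therefore reduces to the analytic identity
\[ \sum_{r\geq 0}\frac{t^{r^2}}{\prod_{j=1}^r(1-t^j)} \;=\; \prod_{i\geq 0}\frac{1}{(1-t^{5i+1})(1-t^{5i+4})}. \]
Following Rogers, I would introduce the auxiliary family $H_a(t):=\sum_{r\geq 0} t^{r^2+ar}/\prod_{j=1}^r(1-t^j)$ for $a\in\mathbb{N}$; peeling off the $r=0$ summand and reindexing the tail produces the $t$-difference relation $H_a(t)=H_{a+1}(t)+t^{a+1}H_{a+2}(t)$. Iterating this relation yields a continued-fraction expansion for $H_0(t)/H_1(t)$ and a functional framework for the functions $H_0$ and $H_1$ individually.

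The genuinely hard step, and the main obstacle, is identifying $H_0$ (and $H_1$) with an infinite product. Following Watson I would apply the Jacobi triple product identity
\[ \prod_{n\geq 1}(1-x^{2n})(1+x^{2n-1}z)(1+x^{2n-1}z^{-1})=\sum_{n\in\mathbb{Z}}x^{n^2}z^n \]
to a specialization arising from a Rogers-Selberg transformation that rewrites the one-sided series $H_0(t)$ as a bilateral sum; the triple product then evaluates this bilateral sum as the desired product $\prod_{i\geq 0}(1-t^{5i+1})^{-1}(1-t^{5i+4})^{-1}$. A purely combinatorial alternative is the Garsia-Milne involution-principle bijection between the two sets of partitions, which is conceptually cleaner but technically intricate. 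Either way, the detailed execution of this final $q$-series step appears in Chapter~7 of \cite{andrews_partitions}, which I would cite rather than re-derive from scratch.
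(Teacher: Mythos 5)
The paper does not prove Theorem~\ref{thm:rogersramanujan} at all: it is stated as a classical result and the text simply refers the reader to Chapter~7 of \cite{andrews_partitions} for a proof. Your proposal, after a correct sketch, likewise ends by citing that same chapter, so in the end you and the paper rest on the identical source; in that sense the two ``proofs'' agree. Your intermediate outline is accurate: the staircase $(1,3,\ldots,2r-1)$ with total weight $r^2$ does convert the gap-$\geq 2$ partitions into arbitrary partitions into at most $r$ parts and yields the sum side $\sum_{r\geq 0} t^{r^2}/\prod_{j=1}^r(1-t^j)$; the family $H_a$ does satisfy $H_a = H_{a+1} + t^{a+1}H_{a+2}$ (your reindexing is exactly the standard Rogers computation); and the genuine content is the identification of $H_0$ with the mod-$5$ infinite product, which indeed goes through Rogers--Selberg plus the Jacobi triple product. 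A tiny labeling slip: you call the mod-$5$ count the ``right-hand side,'' whereas in the theorem as stated it is the first (left-hand) count; this has no mathematical consequence.

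One remark that is specific to this paper and worth noting: the recursion $H_a = H_{a+1} + t^{a+1}H_{a+2}$ that you derive by reindexing the Rogers sum is precisely the recursion $h(d) = h(d+1) + t^d\,h(d+2)$ that the paper obtains in Section~\ref{sec:section_5} by pure commutative algebra (Corollary~\ref{cor:hilbert_hilbert_trick_2} applied to the leading ideal of the focussed arc algebra of the double point), with $h(d)$ playing the role of $H_{d-1}$. This is the paper's actual novel contribution: a conceptual, arc-space origin for that recursion, which Andrews and Baxter had found empirically. However, the paper does not use this to give an independent proof of Theorem~\ref{thm:rogersramanujan} either; it still relies on the known fact that the limit of the Andrews--Baxter recursion is $\prod_{i\equiv 1,4 \bmod 5}(1-t^i)^{-1}$. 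So neither you nor the paper closes the loop without invoking a classical $q$-series argument, and both of you correctly attribute that step to \cite{andrews_partitions}.
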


Many proofs of this identity can be found in the literature. See for instance \cite{andrews_proofs} for an overview of some of them. The Rogers-Ramanujan identity was generalized by Gordon. The statement that we need is the following, see Theorem 7.5 from \cite{andrews_partitions}.

\begin{Thm}\label{thm:gordon}
Let $k\geq 2$. Let $B_k(n)$ denote the number of partitions of $n$ of the form $(\lambda_1,\ldots,\lambda_r)$, where $\lambda_j - \lambda_{j+k-1}\geq 2$ for all $j \in \{ 1, \ldots, r-k+1\}$. Let $A_k(n)$ denote the number of partitions of $n$ into parts which are not congruent to $0, k$ or $k+1$ modulo $2k+1$. Then $A_k(n) = B_k(n)$ for all $n$.
\end{Thm}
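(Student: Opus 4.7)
The plan is to prove the identity by showing that both generating functions $\sum_n A_k(n)q^n$ and $\sum_n B_k(n)q^n$ coincide with the same infinite product. For the $A_k$ side, Euler's generating-function principle applied to partitions with restricted part sizes gives immediately
\[ \sum_{n\geq 0} A_k(n) q^n = \prod_{\substack{i\geq 1\\ i\not\equiv 0,k,k+1 \pmod{2k+1}}} \frac{1}{1-q^i}, \]
so the task reduces to establishing the same product as the generating function of $B_k$.

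For the $B_k$ side I would encode the difference condition $\lambda_j - \lambda_{j+k-1} \geq 2$ via a refined generating function $\Phi_k(x,q) = \sum_{m,n} b_k(m,n) x^m q^n$, where $b_k(m,n)$ counts Gordon-type partitions of $n$ with exactly $m$ parts. Tracking what happens when one removes the smallest part of such a partition, or shifts every part by one, yields a system of linear $q$-difference equations relating $\Phi_k(x,q)$ and $\Phi_k(xq,q)$. Iterating these equations and then setting $x=1$ produces the Andrews--Gordon multisum
\[ \sum_{n\geq 0} B_k(n) q^n = \sum_{n_1, \ldots, n_{k-1} \geq 0} \frac{q^{N_1^2 + N_2^2 + \cdots + N_{k-1}^2}}{(q;q)_{n_1} (q;q)_{n_2} \cdots (q;q)_{n_{k-1}}}, \]
with $N_j := n_j + n_{j+1} + \cdots + n_{k-1}$ and $(q;q)_n := \prod_{j=1}^{n}(1-q^j)$.

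The final step, and the only truly hard one, is to convert this multisum into the required product. I would apply Watson's transformation from basic hypergeometric series (equivalently, iterate Bailey's lemma $k-1$ times starting from the unit Bailey pair) to collapse the multisum into a bilateral theta series. The Jacobi triple product identity then rewrites that theta series as
\[ \frac{(q^k;q^{2k+1})_\infty \,(q^{k+1};q^{2k+1})_\infty \,(q^{2k+1};q^{2k+1})_\infty}{(q;q)_\infty}, \]
and cancelling the three numerator factors against the corresponding factors of $(q;q)_\infty^{-1}$ leaves precisely the product on the $A_k$ side, using that $k+1\equiv -k \pmod{2k+1}$ so the excluded residue classes are symmetric. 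The main obstacle is exactly this sum-to-product conversion: the $q$-difference recursion for $\Phi_k$ is essentially bookkeeping, but invoking Watson's transformation is where the nontrivial $q$-hypergeometric content lives. A purely combinatorial alternative would be Bressoud's bijective proof via successive ranks, which avoids the analytic machinery at the cost of a delicate bijection of comparable length.
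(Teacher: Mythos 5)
The paper does not prove this statement: Theorem~\ref{thm:gordon} is Gordon's 1961 generalization of the Rogers--Ramanujan identities, quoted verbatim from Theorem 7.5 of Andrews' \emph{The Theory of Partitions} and used here as a black box (the paper even says ``The statement that we need is the following, see Theorem 7.5 from \cite{andrews_partitions}''). So there is no in-paper proof to compare against. Your sketch outlines the standard analytic route (Andrews' $q$-difference-equation method combined with the Bailey-chain derivation of the Andrews--Gordon multisum followed by the Jacobi triple product), which is a valid strategy, and the final triple-product step is stated correctly: $-k\equiv k+1\pmod{2k+1}$, so the numerator factors cancel against exactly the residue classes $0,k,k+1$ of $(q;q)_\infty^{-1}$, leaving the product on the $A_k$ side.

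There is, however, a real gap at the $q$-difference-equation stage. You define $\Phi_k(x,q)$ by tracking only the total number of parts. That single family does not satisfy a closed system of $q$-difference equations: when you remove the $1$'s and shift all remaining parts down by one, the gap condition at the bottom of the new partition depends on how many parts equalled $1$, so you must introduce a second index $i$ with $1\le i\le k$ recording that at most $i-1$ parts equal $1$. Andrews works with the two-parameter family (his $J_{k,i}$ and $H_{k,i}$ functions) precisely because the recursion couples different values of $i$; the $q$-difference system closes only over all $i$, and the theorem as stated in the paper is recovered as the slice $i=k$ (the ``at most $k-1$ ones'' constraint being automatic from $\lambda_j-\lambda_{j+k-1}\ge 2$). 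Without this extra parameter your recursion does not exist, and the passage to the multisum $\sum q^{N_1^2+\cdots+N_{k-1}^2}/\bigl((q;q)_{n_1}\cdots(q;q)_{n_{k-1}}\bigr)$ does not go through. The rest of the plan (Euler on the $A_k$ side, Bailey/Watson, Jacobi triple product, and the alternative combinatorial route via Bressoud's successive ranks) is a reasonable roadmap once the two-parameter bookkeeping is in place.
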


The analytic counterpart of Theorem~\ref{thm:rogersramanujan} can be formulated as (see Corollary 7.9 in \cite{andrews_partitions}):

\begin{Cor}[Rogers-Ramanujan identity, analytic
  form]\label{cor:rogersramanujan_analytic}
Theorem~\ref{thm:rogersramanujan} is equivalent to the identity
$$1 + \frac{t}{1-t} + \frac{t^4}{(1-t)(1-t^2)} +
\frac{t^9}{(1-t)(1-t^2)(1-t^3)} + \cdots = \prod_{\substack{i\geq 1\\i\equiv 1,4\bmod 5}}\frac{1}{(1-t^i)}.$$
\end{Cor}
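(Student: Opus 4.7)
The plan is to show that each side of the claimed identity is the generating function for one of the two classes of partitions appearing in Theorem~\ref{thm:rogersramanujan}. Once this is done, the equivalence is immediate: the combinatorial statement asserts that the two counting functions agree, while the analytic identity asserts that their generating series coincide. So the task reduces to two generating-function computations, one for each side.

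For the right-hand side, I would use the standard restricted version of Proposition~\ref{prop:generatingfunction_partitions}: expanding each factor $\frac{1}{1-t^i}$ as $\sum_{a_i\geq 0} t^{ia_i}$ and collecting coefficients, the product $\prod_{i\equiv 1,4\bmod 5}\frac{1}{1-t^i}$ is seen to equal $\sum_{n\geq 0} A(n) t^n$, where $A(n)$ is the number of partitions of $n$ all of whose parts lie in the residue classes $1$ or $4$ modulo $5$. This step is essentially bookkeeping.

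For the left-hand side I would prove, term by term, that
\[ \frac{t^{n^2}}{(1-t)(1-t^2)\cdots(1-t^n)} \]
is the generating function for partitions $\lambda=(\lambda_1,\ldots,\lambda_n)$ of length exactly $n$ with $\lambda_1\geq 1$ and $\lambda_{i+1}-\lambda_i\geq 2$ for $1\leq i<n$. The bijection I have in mind writes $\lambda_i=(2i-1)+\mu_i$: the gap condition becomes $\mu_{i+1}\geq \mu_i$, the positivity $\lambda_1\geq 1$ becomes $\mu_1\geq 0$, and $|\lambda|=n^2+|\mu|$ because $1+3+\cdots+(2n-1)=n^2$. The resulting $\mu$-sequence is an arbitrary partition with at most $n$ parts, whose generating function is $\prod_{i=1}^n\frac{1}{1-t^i}$ by the usual argument underlying Proposition~\ref{prop:generatingfunction_partitions}. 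Summing over $n\geq 0$ (with the $n=0$ term giving $1$, accounting for the empty partition) identifies the left-hand side with the generating function for all partitions with no repeated and no consecutive parts, i.e.\ the other class in Theorem~\ref{thm:rogersramanujan}.

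Neither step is technically difficult. The main obstacle, such as it is, lies in spotting the ``shift by the odd numbers'' bijection on the left-hand side; once that is in place, the identity $|\lambda|=n^2+|\mu|$ explains the appearance of the exponent $n^2$, and both generating-function interpretations make the equivalence with Theorem~\ref{thm:rogersramanujan} transparent by comparing coefficients of $t^n$.
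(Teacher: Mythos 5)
Your argument is correct, and it is in fact the standard proof of this equivalence. The paper itself does not prove Corollary~\ref{cor:rogersramanujan_analytic}: it simply cites Corollary 7.9 of \cite{andrews_partitions}, so there is no in-text argument to compare against. What you supply is precisely the classical reasoning one finds in that reference: the right-hand side expands as the generating series for partitions into parts $\equiv 1,4 \pmod 5$ by multiplying out the geometric series, and the term $t^{n^2}/\bigl((1-t)\cdots(1-t^n)\bigr)$ of the left-hand side counts partitions into exactly $n$ distinct, non-consecutive parts via the shift $\lambda_i = (2i-1)+\mu_i$, which carries the gap condition $\lambda_{i+1}-\lambda_i \geq 2$ to $\mu_{i+1}\geq\mu_i\geq 0$ and accounts for the exponent $n^2 = 1+3+\cdots+(2n-1)$. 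One minor point worth spelling out is that the resulting $\mu$ is a weakly increasing sequence of $n$ \emph{nonnegative} integers, which (after discarding zeros) is an arbitrary partition into at most $n$ parts, whence the factor $\prod_{i=1}^n(1-t^i)^{-1}$ by conjugating to partitions with parts of size at most $n$; you state this but it deserves the explicit word ``nonnegative'' so the reader sees that padding by zeros is allowed. With that clarification the proof is complete and matches the intent of the citation.
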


The analytic analogue of Theorem~\ref{thm:gordon} is somewhat more involved and we will not formulate it here. The interested reader can find it in Andrews' book.

\section{The arc Hilbert-Poincar\'{e} series of $y^n=0$ and the Rogers-Ramanujan identities}\label{sec:section_5}

We are now going to compute the Hilbert-Poincar\'{e} series of the focussed arc algebra (at the origin) of the closed subscheme $X$ of $\mathbb{A}^1_k$ given by $y^n=0$, i.e.\ $X$ is the $n$-fold point, where $n\geq 2$. We fix $n$ and as before we denote by $F_i$ and
$f_i$, $i\in \N$, the generators of the defining ideals of $J_{\infty}(X)$ in $k[y_0,y_1,\ldots]$ and of $J_\infty^0(X)$ in $k[y_1,y_2,\ldots]$ respectively. Here we take $F_0:=y_0^n$ and $F_i:=D(F_{i-1})$ for $i\geq 1$, where $D$ is the $k$-derivation that sends $y_i$ to $y_{i+1}$ (see Proposition~\ref{prop:equations_by_deriving}). Then $f_i = F_i|_{y_0=0}$. To describe them explicitly, we need to introduce Bell polynomials. \\

Let $i\geq 1, 1\leq j\leq i$. The \textit{Bell polynomial} $B_{i,j}\in \mathbb{Z}[y_1,\ldots , y_{i-j+1}]$ is defined by the formula
\[  B_{i,j} := \sum \left(\frac{i!}{(1!)^{k_1}(2!)^{k_2}\cdots \bigl((i-j+1)!\bigr)^{k_{i-j+1}}}\right)\, \frac{y_1^{k_1}y_2^{k_2}\cdots y_{i-j+1}^{k_{i-j+1}}}{k_1!k_2!\cdots k_{i-j+1}!},    \]   
where we sum over all tuples $(k_1,k_2, \ldots , k_{i-j+1})$ of nonnegative integers such that 
\[ k_1 + k_2+ \cdots + k_{i-j+1} = j \quad \text{and} \quad k_1 + 2k_2 + \cdots + (i-j+1)k_{i-j+1} = i.  \]
Actually, the coefficient of $y_1^{k_1}\cdots y_{i-j+1}^{k_{i-j+1}}$ equals the number of possibilities to partition a set with $i$ elements into $k_1$ singletons, $k_2$ subsets with two elements, and so on. We put $B_{i,j}:=0$ if $j>i$. 
From the main result of \cite{bruschek_JetAndBell} we deduce:

\begin{Prop}\label{prop:equations_are_bell_polynomials}
We have $F_0 = y_0^n$ and for $i\geq 1$,
\[ F_i  =  \sum_{j=0}^{n-1} \frac{n!}{j!}\, B_{i,n-j}\, y_0^{j}.  \]
It follows that $f_i = 0$ if $i<n$, and for $i\geq n$,
\[f_i  =  n!\, B_{i,n}.\]
\end{Prop}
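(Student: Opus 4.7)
The plan is to derive both formulas from the algebraic Faà di Bruno identity for the shift derivation $D$, which is the main result of \cite{bruschek_JetAndBell}. For any polynomial $g(y_0) \in k[y_0]$ and any $i\geq 1$, that identity states
\[ D^i\bigl(g(y_0)\bigr) = \sum_{j=1}^{i} g^{(j)}(y_0)\, B_{i,j}(y_1, y_2, \ldots, y_{i-j+1}), \]
where the $B_{i,j}$ are the Bell polynomials defined just above the proposition. Conceptually, this is the usual chain-rule formula for $\frac{d^i}{dt^i}$ applied to $g(y(t))$ where $y(t) := y_0 + y_1 t + (y_2/2!)\, t^2 + \cdots$ is the formal power series whose $\ell$th derivative at $0$ equals $y_\ell$; in characteristic zero the ring homomorphism $P \mapsto \sum_{i\geq 0} D^i(P)\, t^i/i!$ intertwines $D$ with $d/dt$, reducing the identity to classical Faà di Bruno.

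Granting this, the proposition follows by direct substitution. Taking $g(y_0) = y_0^n$, we have $g^{(j)}(y_0) = \frac{n!}{(n-j)!}\, y_0^{n-j}$ for $1 \leq j \leq n$ and $g^{(j)} = 0$ for $j > n$, so
\[ F_i = D^i(y_0^n) = \sum_{j=1}^{\min(i,n)} \frac{n!}{(n-j)!}\, y_0^{n-j}\, B_{i,j}. \]
Reindexing by $k := n - j$ rewrites the right-hand side as $\sum_{k} \frac{n!}{k!}\, B_{i, n-k}\, y_0^k$, with $k$ running over $\{\max(0, n-i), \ldots, n-1\}$. Extending the sum to $k \in \{0, \ldots, n-1\}$ is harmless since $B_{i, n-k}$ vanishes by definition whenever $n - k > i$; this yields exactly the claimed formula for $F_i$. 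Finally, $f_i = F_i|_{y_0 = 0}$ retains only the $k=0$ term, giving $f_i = n!\, B_{i,n}$, which equals $0$ precisely when $n > i$.

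The only substantive obstacle is the Faà di Bruno identity for $D$ itself; once it is invoked, the rest is a clean reindexing and a single substitution $y_0 = 0$.
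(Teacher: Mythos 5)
Your proof is correct and follows the same route as the paper, which simply cites the Fa\`a di Bruno identity for $D$ from \cite{bruschek_JetAndBell} and states the result without spelling out the deduction. You have correctly identified the key identity, and the substitution $g=y_0^n$, the reindexing $k=n-j$ (using $B_{i,n-k}=0$ for $n-k>i$ to extend the range), and the evaluation at $y_0=0$ are all carried out accurately.
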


We endow $k[y_0,y_1,\ldots]$ with the following monomial ordering: for
$\alpha, \beta \in \N^{(\N)}$ we have $y^\alpha > y^\beta$ if and only
if $\wt\, \alpha > \wt\, \beta$ or, in case of equality, the last non-zero
entry of $\alpha - \beta$ is negative (i.e., a weighted reverse lexicographic ordering). The leading term of $F_i$
with respect to this ordering is determined by Proposition~\ref{prop:equations_are_bell_polynomials}:

\begin{Prop}\label{prop:leadingterm_Fi}
Let $i\geq 0$ and write $i = q n + r$ with $0\leq r < n$. The leading term of $F_i$ is 
$$\lt(F_i) = \binom{n}{r} \frac{i!}{(q!)^{n-r}\bigl((q+1)!\bigr)^r}\, y_q^{n-r} y_{q+1}^r. $$
For $i\geq n$, this is also the leading term of $f_i$.
\end{Prop}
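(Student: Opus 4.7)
The plan is to reduce $\lt(F_i)$ to a combinatorial maximization, then read off the coefficient directly from Proposition~\ref{prop:equations_are_bell_polynomials}. Since $D$ raises weight by one and $F_0 = y_0^n$ has weight $0$, the polynomial $F_i$ is weighted-homogeneous of weight $i$; in particular all of its monomials share the same weight, so under the weighted reverse lex ordering comparisons reduce to pure reverse lex on exponent vectors. By Proposition~\ref{prop:equations_are_bell_polynomials} together with the explicit definition of $B_{i,j}$, the monomials occurring in $F_i$ are precisely those $y_0^{a_0} y_1^{a_1}\cdots$ with $|\alpha|:=\sum_l a_l = n$ and $\sum_l l\,a_l = i$, each with a positive coefficient. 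Hence $\lt(F_i) = y^{\alpha^*}$, where $\alpha^*$ is the reverse-lex maximum of $S := \{\alpha : |\alpha|=n,\ \sum_l l\, a_l = i\}$.

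The core combinatorial claim is that, for $i = qn + r$ with $0 \leq r < n$, the maximum $\alpha^*$ is the balanced vector with $n-r$ in position $q$ and $r$ in position $q+1$. I would prove this by examining $L := \max\{l : \beta_l \neq \alpha^*_l\}$ for an arbitrary $\beta \in S \setminus \{\alpha^*\}$ and splitting into three cases. If $L > q+1$, then $\alpha^*_L = 0 < \beta_L$, so $(\alpha^* - \beta)_L < 0$ and $\alpha^* > \beta$ at once. If $L = q+1$, the auxiliary identity $\sum_l (q+1-l)\beta_l = (q+1)n - i = n - r$, combined with $(q+1-l) \geq 1$ for $l \leq q$, gives $\sum_{l\leq q}\beta_l \leq n-r$, hence $\beta_{q+1} \geq r$; equality would force $\beta$ to be supported on $\{q, q+1\}$ and therefore equal to $\alpha^*$, so $\beta_{q+1} > r$ and again $\alpha^* > \beta$. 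Finally, the case $L \leq q$ cannot occur: it would give $\beta_{q+1} = r$ and then $\sum_{l\leq q} l\,\beta_l = q(n-r) = q\sum_{l\leq q}\beta_l$, and since the weights $l$ on the left are all $\leq q$, this equality forces $\beta_l = 0$ for $l < q$ and $\beta_q = n-r$, contradicting $\beta \neq \alpha^*$.

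With $\alpha^*$ identified, the coefficient is essentially bookkeeping on Proposition~\ref{prop:equations_are_bell_polynomials}: the monomial $y_q^{n-r} y_{q+1}^r$ is contributed only by the summand indexed by $j = a_0$ (which is $j = 0$ when $q \geq 1$ and $j = n-r$ when $q = 0$), and within the relevant Bell polynomial it comes from the unique tuple $(k_q, k_{q+1}) = (n-r, r)$; multiplying the Bell coefficient by the prefactor $n!/j!$ yields $\binom{n}{r}\,\frac{i!}{(q!)^{n-r}\bigl((q+1)!\bigr)^r}$, as claimed. For the final assertion, when $i \geq n$ we have $q \geq 1$, so $\alpha^*$ does not involve $y_0$; since $f_i$ is obtained from $F_i$ by deleting exactly the monomials containing $y_0$ (equivalently $f_i = n!\,B_{i,n}$), the leading monomial of $F_i$ survives in $f_i$ with the same coefficient and remains leading.

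The main obstacle is the combinatorial maximization, particularly ruling out the case $L \leq q$ and obtaining the tight bound when $L = q+1$ via the identity $\sum_l(q+1-l)\beta_l = n-r$; once $\alpha^*$ is pinned down, both the coefficient extraction and the comparison between $\lt(F_i)$ and $\lt(f_i)$ are immediate.
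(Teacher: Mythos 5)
Your proof is correct. The paper itself does not supply an argument for this proposition --- it simply states that it ``is determined by Proposition~\ref{prop:equations_are_bell_polynomials}'' --- so your write-up fills in exactly the details the authors left implicit. Two small checks confirm the key points: (i) the set of monomials of $F_i$ is indeed $\{y^{\alpha}: |\alpha|=n,\ \sum_l l\,\alpha_l = i\}$, each with a positive coefficient, because $\frac{n!}{j!}B_{i,n-j}\,y_0^j$ contributes precisely the $\alpha$ with $\alpha_0=j$ and the Bell-polynomial coefficients are all positive; and (ii) your three-case reverse-lex maximization is airtight: for $L=q+1$ the identity $\sum_l(q+1-l)\beta_l=n-r$ gives $\beta_{q+1}\geq r$ with equality only for $\alpha^*$, and for $L\leq q$ the forced equality $\sum_{l\leq q}(q-l)\beta_l=0$ kills all mass below position $q$. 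The coefficient extraction from the unique contributing summand $j=\alpha^*_0$ and the observation that for $i\geq n$ the leading monomial is $y_0$-free (hence survives the substitution $y_0=0$ defining $f_i$) are both correct.
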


It will turn out that these leading terms generate the leading ideal of the ideal $I=(f_i; i\geq n)$ of $k[y_1,y_2,\ldots]$, i.e., the ideal generated by the leading monomials of all polynomials in $I$. Theorem~\ref{thm:hilbert_leading} from the appendix tells us that we can deduce the arc Hilbert-Poincar\'e series from this leading ideal. For this, we need to compute a Gr\"obner basis. All results about Gr\"obner basis theory that we need are collected in the appendix as well.

\begin{Rem}
The results in the appendix are stated for polynomial rings in finitely many variables. In the proof of the next crucial lemma we will use them for countably many variables. We may do this, since we can `approximate' the arc Hilbert-Poincar\'e series according to Proposition~\ref{prop:approximate_by_jet_spaces}. We will explain this more precisely after the proof of the lemma.
\end{Rem}

\begin{Lem}\label{lem:gbbasis_nfold_point}
The leading ideal of $I=(f_i; i\geq n)$ is given by $L(I)=(\lm(f_i); i\geq
n)$. 
\end{Lem}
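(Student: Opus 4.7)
The statement is precisely the claim that $\{f_i : i \geq n\}$ is a Gr\"obner basis of $I$ with respect to the weighted reverse lexicographic order, so the plan is to verify Buchberger's criterion: every S-polynomial $S(f_i, f_j)$ reduces to zero modulo $\{f_k : k \geq n\}$. To handle the countably many variables, I would first reduce to a finite setting: since $I$ is weight-homogeneous and the division algorithm preserves weight, any witness of reduction for $S(f_i, f_j)$ involves only the $f_k$ with $k \leq \wt S(f_i, f_j)$, and these in turn only involve $y_1, \dots, y_N$ for some finite $N$. Proposition~\ref{prop:approximate_by_jet_spaces} makes this truncation harmless, so the classical Buchberger criterion from the appendix applies level by level.

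Write $i = qn+r$ and $j = q'n+r'$ with $0 \leq r, r' < n$ and $q \leq q'$. Proposition~\ref{prop:leadingterm_Fi} gives $\lm(f_i) = y_q^{n-r} y_{q+1}^r$ and $\lm(f_j) = y_{q'}^{n-r'} y_{q'+1}^{r'}$. If $q' \geq q+2$, the two leading monomials share no common variable, so Buchberger's coprimality criterion reduces the S-polynomial to zero automatically. The nontrivial cases are therefore $q' = q$ and $q' = q+1$, where the leading monomials overlap in one or two variables.

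For these remaining cases, I would exploit the explicit form $f_i = n!\, B_{i,n}$ from Proposition~\ref{prop:equations_are_bell_polynomials} together with the standard recursions for Bell polynomials. A key computational tool, obtained by differentiating the formula $F_i = \sum_{j=0}^{n-1} \frac{n!}{j!}\, B_{i,n-j}\, y_0^j$ with respect to $y_0$ and using $F_{i+1} = D(F_i)$, is the differential identity
\[ f_{i+1} = D(f_i) + y_1 \cdot n!\, B_{i,n-1}, \]
where $D$ sends $y_k$ to $y_{k+1}$. After forming $S(f_i, f_j)$ the leading terms cancel by construction, producing a polynomial of strictly smaller leading monomial (in the weighted reverse lex order) and the same total weight; this remainder can then be rewritten as a $k[y_1, y_2, \ldots]$-combination of the $f_k$'s with $k < \wt S(f_i, f_j)$ by repeatedly applying the identity above. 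One proceeds by induction on weight, with the base case handled by direct inspection at small weights.

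The main obstacle is the combinatorial bookkeeping in the overlapping cases $q' \in \{q, q+1\}$: one must check that the Bell polynomial identities furnish exactly the cancellations the division algorithm requires, and that the weighted reverse lexicographic order guarantees that each successive reduction strictly decreases the leading monomial so that the process terminates with remainder zero. Everything else---the coprime case, the finiteness reduction, and the deduction that $L(I)$ is generated by the $\lm(f_i)$---is standard once Buchberger's criterion is established for these generators.
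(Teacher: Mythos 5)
Your overall framework---verify Buchberger's criterion, use weight-homogeneity to reduce to finitely many variables at each level, dispose of coprime pairs (here $q' \geq q+2$)---matches the paper and is sound. The genuine gap lies in the heart of the argument: how to show that $S(f_i,f_j)$ reduces to zero in the remaining cases $q'=q$ and $q'=q+1$. Your proposed tool, the identity $f_{i+1} = D(f_i) + n!\, y_1 B_{i,n-1}$, is arithmetically correct but not useful here: the term $n!\, y_1 B_{i,n-1}$ does \emph{not} lie in the ideal $I=(f_k;k\geq n)$ (the $B_{i,n-1}$ are the generators for the $(n-1)$-fold point, not the $n$-fold point), so this identity cannot be iterated to produce the required $k[y_1,y_2,\dots]$-linear combinations of the $f_k$. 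The paper sidesteps this exactly by working in $k[y_0,y_1,\dots]$ with the \emph{syzygy} $\mathcal{R}\colon ny_1F_0 - y_0F_1 = 0$: since $\mathcal{R}$ is already a relation (equal to zero), so is every derivative $D^m\mathcal{R}$, and evaluating these at $y_0=0$ yields genuine relations among the $f_k$ alone. The unwanted $B$-terms all come with a factor $y_0$ and vanish.

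Beyond that, you skip two further ingredients that do real work in the paper's proof. First, a nontrivial \emph{pruning of the syzygy basis} (the paper's Step 1, via Proposition~\ref{prop:reduction_of_syzygy_basis}) shows that for $q'=q$ only consecutive pairs $(i,i+1)$ need be checked, and for $q'=q+1$ only the specific pairs $(qn+r,\,(q+1)n+n-r)$. Without this, the case analysis balloons. Second, and most seriously, the hardest case $(qn+r, (q+1)n+n-r)$ (the paper's Step 2.3) requires a delicate descending induction on $r$: one must combine two derived relations $\mathcal{A}_1$, $\mathcal{A}_2$, recognize inside them multiples of previously handled $S$-polynomials, substitute their known reductions, and then argue---via a careful analysis of which monomials $M f_a$ can share the maximal leading monomial $N$ and what homogeneous syzygies they generate---that the division process keeps decreasing and never stalls. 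Your phrase ``one proceeds by induction on weight, with the base case handled by direct inspection at small weights'' does not engage with any of this; the obstruction the paper has to fight is precisely that $\lm\bigl(S(f_{qn+r},f_{(q+1)n+n-r})\bigr)$ can be strictly smaller than leading monomials of terms appearing in the candidate expansion, so termination is not automatic. As it stands, the proposal identifies the right setting but leaves the crux of the lemma unproven.
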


Before giving the proof of this lemma, we will give some concrete computations for $n=4$ to explain the ideas of the proof. By Gr\"obner basis theory it suffices to show that all $S$-polynomials on the $f_i$ reduce to zero modulo $\{f_i; i\geq n\}$, since the $S_{i,j}$ form a basis of the syzygies on the leading terms of the $f_i$ (see Proposition~\ref{prop:S_polynomials_form_basis} and Theorem~\ref{thm:criterion_for_Groebner_basis}). From Proposition~\ref{prop:leadingterm_Fi} we deduce that
\[  S(f_i,f_j) = S(F_i,F_j)|_{y_0=0}, \]
and so we can equally well show that the $S(F_i,F_j)$ reduce to zero modulo $\{F_i; i\geq 0\}$. Moreover we may restrict to those pairs $F_i,F_j$ for which the leading monomials have a nontrivial common factor by Proposition~\ref{prop:coprime_leading_monomials} (this is Step 2.1 in the proof of the lemma). Let us write the first $F_i$ down for $n=4$:
\begin{align*}
F_0 & = y_0^4 , \\
F_1 & = 4 y_0^3y_1, \\
F_2 & = 12y_0^2y_1^2 + 4 y_0^3 y_2, \\
F_3 & = 24 y_0y_1^3  + 36y_0^2 y_1 y_2 + 4y_0^3 y_3 , \\
F_4 & = 24y_1^4  + 144 y_0y_1^2y_2  + 36y_0^2y_2^2 + 48y_0^2y_1y_3 + 4 y_0^3y_4 , \\
F_5 & = 240 y_1^3 y_2 + 360y_0y_1y_2^2  + 240y_0y_1^2y_3 + 120 y_0^2y_2y_3 + 60 y_0^2y_1y_4 + 4y_0^3y_5 , \\
F_6 & =  1080 y_1^2 y_2^2  + 360 y_0 y_2^3  + 480 y_1^3 y_3  + 1440y_0y_1y_2y_3  + 120 y_0^2y_3^2  + 360 y_0y_1^2y_4 \\
    & \quad  + 180 y_0^2y_2y_4 + 72y_0^2y_1y_5           + 4 y_0^3y_6, \\
F_7 & =   2520y_1 y_2^3   + 5040 y_1^2y_2y_3  + 2520 y_0 y_2^2 y_3+ 1680 y_0y_1y_3^2  +  840 y_1^3y_4 \\
    & \quad + 2520 y_0y_1y_2y_4  + 420 y_0^2y_3y_4 + 504 y_0y_1^2y_5 + 252 y_0^2y_2y_5 + 84 y_0^2y_1y_6 + 4 y_0^3 y_7 .
\end{align*}
We may further reduce the set of $S$-polynomials that have to be checked by invoking Proposition~\ref{prop:reduction_of_syzygy_basis}. For instance, we may forget about $S(F_0,F_3)$ if we have checked that $S(F_0,F_2)$ and $S(F_2,F_3)$ reduce to zero, since $\lm(F_2)$ divides the least common multiple of $\lm(F_0)$ and $\lm(F_3)$. Similarly, using $F_1$, we may forget about $S(F_0,F_2)$. If we do this in a precise way, then we see that we only need to check the following $S$-polynomials between the above $F_i$:
\[ \left\{ \begin{array}{l} S(F_i,F_{i+1}) \text{ for } 0\leq i\leq 6, \\
S(F_1,F_7), S(F_2,F_6) , S(F_3,F_5).
\end{array} \right.  \]
This reduction is explained in Step 1 of the proof.

To see that $S(F_i,F_{i+1})$ reduces to zero, we note the following. We start from 
\begin{equation*}
 \mathcal{R} : 4y_1F_0 - y_0F_1 = 0
\end{equation*}
and we derive this relation. This gives
\begin{equation}\label{eq:S(F_1,F_2)}
 4y_2F_0 + 3y_1F_1 - y_0F_2 = 0,
 \end{equation}
or equivalently,
\[ 12 S(F_1,F_2) = -4y_2F_0. \]
This shows that $S(F_1,F_2)$ reduces to zero modulo $\{F_i; i\in \mathbb{N}\}$. Deriving (\ref{eq:S(F_1,F_2)}) once more gives
 \[ 4y_3F_0 + 7y_2F_1 + 2y_1F_2 - y_0F_3 = 0,\]
or
\[ 24S(F_2,F_3) = - 4y_3F_0 - 7y_2F_1.  \]
Hence, $S(F_2,F_3)$ reduces to zero. Similarly, by deriving the right number of times, we can prove that all $S(F_i,F_{i+1})$ reduce to zero. That is essentially Step 2.2 in the below proof.

Finally, we have to argue why $S(F_1,F_7), S(F_2,F_6)$ and $S(F_3,F_5)$ reduce to zero. That amounts to Step 2.3 in the proof of the lemma. First we derive relation $\mathcal{R}$ four times to find
\[ 4y_5F_0 + 15y_4F_1 + 20y_3F_2 +10y_2F_3 - y_0F_5 = 0.  \]
Note that $F_4$ does not appear here. This equation can be written as
\begin{equation}\label{eq:F_3F_5}
 240S(F_3,F_5) = -4y_5F_0 - 15y_4F_1 - 20y_3F_2
 \end{equation}
and this shows that $S(F_3,F_5)$ reduces to zero. Next we look at $S(F_2,F_6) = \frac{y_2^2}{12}F_2 - \frac{y_0^2}{1080}F_6$. We note that the terms of $1080S(F_2,F_6)$ appear in  
\[ 10y_2 D^3 \mathcal{R} + y_0 D^5\mathcal{R}.  \] 
From this we deduce that
\begin{align*}
 1080S(F_2,F_6) & = - 40 y_2y_4F_0 - 110 y_2 y_3 F_1 - 10 y_1y_2F_3  - 4y_0y_6F_0 - 19y_0y_5 F_1\\
  & \quad  -35 y_0y_4 F_2 -30 y_0y_3 F_3 + y_0y_1F_5. 
\end{align*}
Again $F_4$ does not appear, but this does not yet show that $S(F_2,F_6)$ reduces to zero, since 
\[ \lm(S(F_2,F_6)) = y_0^2y_1^3y_3 < y_0y_1^4y_2 = \lm(y_0y_1F_5)\]
for instance. But we do recognize $240y_1S(F_3,F_5)$ and hence we can replace this using (\ref{eq:F_3F_5}). We find
\begin{align*}
 1080S(F_2,F_6) & = (- 40 y_2y_4 +4y_1y_5 - 4y_0y_6) F_0 + (- 110 y_2 y_3   + 15y_1y_4   - 19y_0y_5) F_1\\
  & \quad  + (20y_1y_3 -35 y_0y_4) F_2 - 30 y_0y_3 F_3. 
\end{align*}
This shows that $S(F_2,F_6)$ reduces to zero. We proceed analogously for $S(F_1,F_7) = \frac{y_2^3}{4}F_1 - \frac{y_0^3}{2520}F_7$. First we look at 
\[ 90 y_2^2 D^2 \mathcal{R} + y_0^2 D^6\mathcal{R}. \]
In there we recognize $2520S(F_1,F_7), 2160y_0y_2S(F_3,F_5)$ and $2160y_1S(F_2,F_6)$. We replace the latter two and we find the following after some computations:
\begin{align*}
 2520S(F_1,F_7) & = (-360y_2^2y_3 + 80 y_1y_2y_4 -8y_1^2y_5-36y_0y_2y_5+ 8y_0y_1y_6- 4y_0^2y_7)F_0 \\
 & \quad + (220 y_1y_2 y_3 - 30y_1^2y_4  -135y_0y_2y_4+ 38y_0y_1y_5 - 23y_0^2y_6) F_1 \\
 & \quad + ( -40y_1^2y_3- 180y_0y_2y_3+70 y_0y_1y_4 - 54y_0^2y_5)F_2 \\
  & \quad  + ( 60 y_0y_1y_3- 65y_0^2y_4 )F_3 - 40y_0^2y_3F_4 .
\end{align*}
Unfortunately this does not show yet that $S(F_1,F_7)$ reduces to zero. We have:
\[ \lm(S(F_1,F_7)) = y_0^3y_1^2y_2y_3 < y_0^2y_1^4y_3 = \lm(y_0^2y_3F_4) = \lm(y_0y_1y_3F_3) = \lm(y_1^2y_3F_2).\]
This implies that $( -40y_1^2y_3 , 60 y_0y_1y_3, - 40y_0^2y_3)$ forms a homogeneous syzygy on the leading terms of $(F_2,F_3,F_4)$. We already know that a basis for these syzygies is given by $S_{2,3}$ and $S_{3,4}$. Indeed, we may compute that
\[ -40y_1^2y_3F_2 + 60 y_0y_1y_3F_3 - 40y_0^2y_3F_4 = -480y_1y_3S(F_2,F_3) + 960y_0y_3S(F_3,F_4)\]
Moreover, we explained that $S(F_2,F_3)$ and $S(F_3,F_4)$ reduce to zero modulo $\{F_i; i\geq 0\}$. Replacing their expressions in terms of the $F_i$, we conclude that $S(F_1,F_7)$ reduces to zero as well.

From this example, we see that it will be useful for the proof of Lemma~\ref{lem:gbbasis_nfold_point} to keep track of the leading monomials of the relevant $S$-polynomials.

\begin{Prop}\label{prop:lm_S_poly_consecutive}
Let $q\geq 1, 0\leq r\leq n-1$. Then
\[ \lm\bigl(S(f_{qn+r} , f_{qn+r+1})\bigr) = \left\{\begin{array}{ll} y_{q-1}y_q^{n-r-2}y_{q+1}^{r+2} & \text{if } q\geq 2, r\neq n-1, \\
y_q^2y_{q+1}^{n-2}y_{q+2} & \text{if } q\geq 2, r = n-1, \\
y_1^{n-r+1}y_2^{r-1}y_3 & \text{if } q=1, r\neq 0.
 \end{array}\right. \]
\end{Prop}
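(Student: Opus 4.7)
The plan is a direct computation from Proposition~\ref{prop:equations_are_bell_polynomials} (the explicit Bell-polynomial form $f_i = n! B_{i,n}$) and Proposition~\ref{prop:leadingterm_Fi} (which gives $\lm(f_i) = y_q^{n-r} y_{q+1}^r$ when $i = qn+r$). In every case the $S$-polynomial has the form
\[ S(f_i,f_{i+1}) = \lc(f_i)^{-1}\, y_{q+1} f_i - \lc(f_{i+1})^{-1}\, y_q f_{i+1}, \]
and the two leading terms cancel by construction. Hence $\lm(S)$ is controlled by the second-largest monomials of $f_i$ and $f_{i+1}$.

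First I would classify these second-largest monomials. A monomial of $f_i$ corresponds to a tuple $(k_1,k_2,\ldots)$ with $\sum k_l = n$ and $\sum l k_l = i$, and the weighted reverse lex ordering prefers smaller top-used index and then smaller exponent at that index. A short case analysis on the possible values of $k_{q+1}$ and $k_{q+2}$ shows: for $q \geq 2$ the second-largest monomial of $f_{qn+r}$ is $y_{q-1} y_q^{n-r-2} y_{q+1}^{r+1}$ when $r \leq n-2$ and $y_q^2 y_{q+1}^{n-3} y_{q+2}$ when $r = n-1$ (for $n \geq 3$); for $q=1$ the variable $y_0$ is unavailable, forcing a jump in the top index to give $y_1^{n-r+1} y_2^{r-2} y_3$ for $r \geq 2$, while $f_{n+1}$ itself is a single monomial.

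Multiplying these second-largest monomials by $y_{q+1}$ and $y_q$ respectively produces, in each of the three cases, the \emph{same} candidate monomial from both sides: $y_{q-1} y_q^{n-r-2} y_{q+1}^{r+2}$ in Case~1, $y_q^2 y_{q+1}^{n-2} y_{q+2}$ in Case~2, and $y_1^{n-r+1} y_2^{r-1} y_3$ in Case~3. No other monomial of $y_{q+1} f_i$ or $y_q f_{i+1}$ lies strictly between the cancelled leading monomial and this candidate, so it suffices to show that the candidate is not itself cancelled. Using the coefficient formula $[y_1^{k_1} y_2^{k_2}\cdots]\, n! B_{i,n} = n!\, i! / \bigl(\prod (l!)^{k_l} k_l!\bigr)$ to compute the coefficients in the two summands and subtract, a short simplification shows that the result reduces, up to nonzero rational factors, to $(n-r-1)(n+r+2)$ in Case~1, $(n-1)(n+2)$ in Case~2, and $(2n-r+1)$ in Case~3, all of which are nonzero in the stated ranges.

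The main obstacle is this final coefficient computation: because the two contributions are \emph{identical} monomials of a priori comparable size, proving non-cancellation genuinely requires the factorial identities to come out the right way. A secondary complication is the boundary $r=1$ in Case~3, where $f_{n+1}$ is a single monomial; there the claimed leading monomial $y_1^n y_3$ comes entirely from the $y_1 f_{n+2}$ summand and no cancellation check is needed.
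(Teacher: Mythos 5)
Your approach is essentially the paper's: observe that the leading terms cancel in the $S$-polynomial by construction, identify the candidate for the next leading monomial, and verify by explicit Bell-polynomial coefficient computation that it survives. The paper takes a slightly cleaner shortcut at the identification step: it observes directly that the claimed monomial is the \emph{second-biggest} monomial of degree $n+1$ and weight $q(n+1)+r+1$ among all monomials in $y_1,y_2,\ldots$, so after the leading terms cancel the only question is whether its coefficient vanishes. Your route of listing the second-largest monomial of $f_{qn+r}$ and $f_{qn+r+1}$ separately and multiplying by the cofactors is morally the same but requires more bookkeeping.

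That extra bookkeeping has an edge-case gap. Your claim that ``multiplying these second-largest monomials by $y_{q+1}$ and $y_q$ respectively produces, in each of the three cases, the \emph{same} candidate monomial from both sides'' fails for $q\geq 2$ and $r=n-2$. There $y_{q+1}\cdot(\text{second-largest of }f_{qn+n-2})=y_{q-1}y_{q+1}^{n}$, but since $f_{qn+n-1}$ sits in the $r=n-1$ regime its second-largest is $y_q^2y_{q+1}^{n-3}y_{q+2}$, giving $y_q\cdot(\text{second-largest})=y_q^3y_{q+1}^{n-3}y_{q+2}$ — a different monomial (a similar mismatch occurs at the $n=2$, $r=n-1$ boundary you flag but do not resolve). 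The conclusion is unaffected, indeed easier in those cases: the two candidates differ, $y_{q-1}y_{q+1}^{n}$ is the larger in the weighted reverse lex order, and it receives a contribution from only one summand so no cancellation check is needed. But your stated logic — ``it suffices to show that the candidate is not itself cancelled'' — does not cover it as written, so the proof needs to branch on whether the two next-largest monomials coincide. Finally, your residual factor $(n-r-1)(n+r+2)$ in Case 1 does not match the paper's, which computes $(n-r)(r+2)-(r+1)(n-r-2)=n+r+2$; since $0\leq r\leq n-2$ both quantities are nonzero, so this is harmless, but it suggests a slip somewhere in your factorial bookkeeping that is worth rechecking.
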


We remark that $S(f_n,f_{n+1})= 0$.

\begin{proof}
In all three cases we have written the second biggest monomial with degree $n+1$ and weight $q(n+1)+r+1$ in the variables $y_1,y_2,\ldots$. We only have to show that the monomial occurs with nonzero coefficient in $S(f_{qn+r} , f_{qn+r+1})$. Using Proposition~\ref{prop:leadingterm_Fi} we see that this $S$-polynomial is a multiple of
\[   (n-r)(q!)(qn+r+1)y_{q+1}f_{qn+r} -(r+1)\bigl((q+1)!\bigr) y_q f_{qn+r+1}. \]
Assume for instance that $q\geq 2$ and $r\neq n-1$. A computation using Proposition~\ref{prop:equations_are_bell_polynomials} gives then 
\[ \frac{(n+r+2)(n!)\bigl( (qn+r+1)!\bigr)}{\bigl((q-1)!\bigr)(q!)^{n-r-3}\bigl((q+1)!\bigr)^{r+1}\bigl((r+2)!\bigr)\bigl((n-r-2)!\bigr)} \neq 0  \]
as the coefficient of $y_{q-1}y_q^{n-r-2}y_{q+1}^{r+2}$ in the above expression. The other cases are treated similarly.
\end{proof}

\begin{Prop}\label{prop:lm_S_poly_difficult}
Let $q\geq 1, 1\leq r\leq n-1$. Then
\[ \lm\bigl(S(f_{qn+r} , f_{(q+1)n+n-r})\bigr) = \left\{\begin{array}{ll} y_{q-1}y_q^{n-r-2}y_{q+1}^{r+1}y_{q+2}^{n-r} & \text{if } q\geq 2, r\neq n-1, \\
y_{q-1}y_{q+1}^{n-2}y_{q+2}^2 & \text{if } q\geq 2, r = n-1, \\
y_1^{n-r}y_2^{r+1}y_3^{n-r-2}y_4 & \text{if } q=1, r\neq n-1, \\
y_1^{2}y_2^{n-2}y_4 & \text{if } q=1, r=n-1.
 \end{array}\right. \]
\end{Prop}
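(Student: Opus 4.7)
The plan is to unpack the $S$-polynomial
\[
S := S(f_{qn+r},\,f_{(q+1)n+n-r}) = \frac{y_{q+2}^{n-r}}{c_a}\,f_a - \frac{y_q^{n-r}}{c_b}\,f_b,
\]
where $a := qn+r$, $b := (q+1)n+(n-r)$, and $c_a, c_b$ are the leading coefficients from Proposition~\ref{prop:leadingterm_Fi}. By Proposition~\ref{prop:equations_are_bell_polynomials}, a monomial of $f_i$ corresponds to a partition $(k_j)_{j\geq 1}$ with $\sum k_j = n$ and $\sum jk_j = i$, carrying the explicit coefficient coming from $n!B_{i,n}$. Since $a+b = 2(q+1)n$, the reflection $(k_j)\mapsto(\tilde k_j):=(k_{2(q+1)-j})$ restricts to a bijection between the partitions of $a$ and the partitions of $b$ whose support lies in $\{1,\ldots,2q+1\}$.

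The first key step is to determine when a partition of $f_a$ and its reflection in $f_b$ contribute to the \emph{same} monomial in $S$. Comparing exponent vectors, $y_{q+2}^{n-r}\prod y_j^{k_j} = y_q^{n-r}\prod y_j^{\tilde k_j}$ holds precisely when $k_q - k_{q+2} = n-r$ and $k_j = k_{2(q+1)-j}$ for every $j\notin\{q,q+2\}$; call such partitions \emph{paired}. Using the Bell coefficient together with the explicit shape of $c_a, c_b$, I would show that for a paired partition the coefficient of its monomial in $y_{q+2}^{n-r}f_a/c_a$ equals the coefficient of the reflected monomial in $y_q^{n-r}f_b/c_b$. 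This reduces to the telescoping identity
\[
\left(\frac{q!}{(q+2)!}\right)^{n-r}\prod_j\left(\frac{(2(q+1)-j)!}{j!}\right)^{k_j}=1,
\]
in which the factors for $j\notin\{q,q+2\}$ cancel pairwise by the symmetry $k_j=k_{2(q+1)-j}$, while the surviving $((q+2)!/q!)^{k_q-k_{q+2}}$ exactly compensates $(q!/(q+2)!)^{n-r}$ by the condition $k_q-k_{q+2}=n-r$. Hence every paired contribution cancels in $S$, and $\lm(S)$ must come from an \emph{unpaired} partition of $f_a$ or $f_b$.

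It then remains to locate the largest unpaired contribution in each case, using Proposition~\ref{prop:leadingterm_Fi} and a short enumeration of monomials of $f_a$ and $f_b$ in reverse-lex order. For Case 1 ($q\geq 2$, $r\leq n-2$) the second-largest monomial of $f_a$ is $y_{q-1}y_q^{n-r-2}y_{q+1}^{r+1}$; it is unpaired (it uses $y_{q-1}$ but not $y_{q+3}$), and its product with $y_{q+2}^{n-r}$ dominates the analogous $f_b$-contribution in reverse lex, giving the claimed $\lm(S)$. For Case 2 ($q\geq 2$, $r=n-1$) the second-largest $y_q^2y_{q+1}^{n-3}y_{q+2}$ is instead paired (one verifies $k_q-k_{q+2}=1=n-r$) and so cancels; the next monomial $y_{q-1}y_{q+1}^{n-2}y_{q+2}$ is unpaired and supplies $\lm(S)$. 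For Cases 3 and 4 ($q=1$) the identity $k_1-k_3=n-r$ is forced by the degree and weight constraints for any partition supported in $\{y_1,y_2,y_3\}$, so every such partition of $f_a$ is paired; the leading of $S$ then comes from the largest monomial of $f_b$ actually using $y_4$, which is $y_2^{r+1}y_3^{n-r-2}y_4$ for $r\leq n-2$ and $y_1y_2^{n-2}y_4$ for $r=n-1$, each times $y_1^{n-r}$.

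The hardest step will be the coefficient-cancellation identity, since it is responsible for eliminating the entire infinite family of paired contributions in one stroke. The remaining work is a finite collection of reverse-lex comparisons of exponent vectors (confirming that each proposed candidate beats the competitors from the other side), together with elementary checks of positivity for exponents such as $n-r-2$, $r-2$, or $n-3$ that also serve to delineate the four cases; small-$n$ degeneracies are then trivial to handle directly.
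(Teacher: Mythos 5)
Your proposal follows essentially the same strategy as the paper: expand the $S$-polynomial as $\frac{y_{q+2}^{n-r}}{c_a}f_a - \frac{y_q^{n-r}}{c_b}f_b$, enumerate the candidate monomials in decreasing order, and find the largest one with nonzero net coefficient. Two points in your sketch deserve a warning, however.

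The \emph{reflection / paired-partition} criterion is not the right cancellation condition, only a sufficient one. If a monomial $N$ appears in both terms, the partition of $f_b$ contributing to $N$ is forced to be the \emph{shift} $(m_j)$ with $m_q = k_q-(n-r)$, $m_{q+2}=k_{q+2}+(n-r)$ and $m_j=k_j$ otherwise, \emph{not} the reflection $(\tilde k_j)$. Running the Bell-coefficient quotient through the expressions for $c_a,c_b$ one finds that cancellation in $S$ occurs exactly when $\prod m_j! = \prod k_j!$, i.e.\ when $k_q-k_{q+2}=n-r$; the extra symmetry $k_j=k_{2(q+1)-j}$ plays no role. Your pairing condition happens to coincide with the true criterion for the specific intermediate monomials you discard (the lcm, and in Case~2 the monomial $y_q^2y_{q+1}^{n-3}y_{q+2}^2$), so the conclusion is unharmed, but the conceptual setup is off, and a reader would be misled into believing that non-reflective partitions with $k_q-k_{q+2}=n-r$ survive.

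More seriously, the inference ``$\lm(S)$ comes from an unpaired partition, and $M^*$ is the largest unpaired contribution, therefore $\lm(S)=M^*$'' has a gap: an unpaired monomial can still cancel (if it satisfies $k_q-k_{q+2}=n-r$ without full reflection symmetry), so you must separately verify that $M^*$ has a genuinely nonzero coefficient. The direct argument, which is what the paper gives, is that in each case $M^*$ is \emph{not} divisible by $y_q^{n-r}$ (Cases 1--2, 4), or arises only from $f_b$ (Cases 3--4); hence it occurs in exactly one of the two terms and its Bell coefficient is automatically nonzero. Phrasing this in terms of divisibility rather than ``unpaired'' also closes the gap in Cases~3 and 4, where you should additionally check (rather than implicitly discard) that the $y_4$-monomials coming from the first term $y_3^{n-r}f_a$ are all strictly smaller in the order than $y_1^{n-r}y_2^{r+1}y_3^{n-r-2}y_4$ — they are, but the claim ``the leading comes from the largest monomial of $f_b$ using $y_4$'' skips this comparison.
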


\begin{proof}
Now $S(f_{qn+r} , f_{(q+1)n+n-r})$ is a multiple of
\begin{equation}\label{difficult_S_poly}
 \frac{((q+1)n+n-r)!}{(qn+r)!} y_{q+2}^{n-r}f_{qn+r} - \frac{\bigl((q+2)!\bigr)^{n-r}}{(q!)^{n-r}}y_q^{n-r}f_{(q+1)n+n-r}. 
 \end{equation}
In the first case, we have written the second biggest monomial with degree $2n-r$, weight $(q+1)(2n-r)$, and subject to the additional condition that $y_q$ or $y_{q+2}$ has degree at least $n-r$. It only occurs in the first term of (\ref{difficult_S_poly}) due to the factor $y_{q}^{n-r-2}$. In the second case a small computation using Proposition~\ref{prop:equations_are_bell_polynomials} shows that the second biggest monomial $y_q^2y_{q+1}^{n-3}y_{q+2}^2$ does not occur in $S(f_{qn+r} , f_{(q+1)n+n-r})$ (if $n\geq 3$). We have written the third biggest, which appears with nonzero coefficient in (\ref{difficult_S_poly}). 

If $q=1$, then we can compute that no terms containing only $y_1,y_2,y_3$ occur in (\ref{difficult_S_poly}). We have written the biggest monomial containing $y_4$ of degree $2n-r$, weight $2(2n-r)$, and subject to the additional condition that $y_1$ or $y_3$ has degree at least $n-r$. It appears in (\ref{difficult_S_poly}) with nonzero coefficient.
\end{proof}

\begin{proof}[Proof of Lemma~\ref{lem:gbbasis_nfold_point}]
  We will show that the $f_i$ form a Gr\"obner basis of $I$. Using the notation of the appendix, we will first show that the $S_{i,j}$ with $n\leq i      <j$ and
  \[ \left\{ \begin{array}{lcc} j=i+1 & & \text{\textbf{or}} \\ 
  i = qn + r, j=(q+1)n + n - r \text{ for } q\geq 1, 1\leq r \leq n-1 & & \text{\textbf{or}} \\ 
  f_i\text{ and } f_j \text{ have relatively prime leading monomials } & &  \end{array} \right.  \]
  form a homogeneous basis for the syzygies on the leading terms of the $f_i$. In the second step we will show that all elements of this basis reduce to zero modulo $\{f_i; i\geq n\}$. From Theorem~\ref{thm:criterion_for_Groebner_basis} we conclude then that the $f_i$ are indeed a Gr\"obner basis.\\
  
  \textit{Step 1. The set of $S_{i,j}$ described above forms a basis of the syzygies.} 
  
  From Proposition~\ref{prop:S_polynomials_form_basis} we know already that the set of all $S_{i,j}$ forms a homogeneous basis for the syzygies on the $f_i$. If $\lm(f_i)$ and $\lm(f_j)$ are not coprime then by Proposition~\ref{prop:leadingterm_Fi} the syzygy $S_{i,j}$ is of the type $S_{qn,qn+r}$ for $q\geq 1, 0< r< n$ or $S_{qn+r,qn+s}$, where $q\geq 1, 0< r <n, r<s<2n$. 
  
  For $r$ descending from $n-1$ down to $2$ we use Proposition~\ref{prop:reduction_of_syzygy_basis} with $g_i=f_{qn}, g_j = f_{qn+r}$ and $g_k = f_{qn+r-1}$. Indeed, the least common multiple of $\lm(f_{qn})$ and $\lm(f_{qn+r})$ equals $y_q^ny_{q+1}^r$ and this is of course divisible by $\lm(g_k)=y_q^{n-r+1 }y_{q+1}^{r-1}$. So we remove the syzygies $S_{qn,qn+n-1},\ldots , S_{qn,qn+2}$ and we are still left with a basis.
  
  Next we choose $r\in \{1,\ldots, n-2\}$, and we let $s$ descend from $n$ down to $r+2$. We use again Proposition~\ref{prop:reduction_of_syzygy_basis}, now with $g_i=f_{qn+r}, g_j = f_{qn+s}$ and $g_k = f_{qn+s-1}$. The least common multiple of $\lm(f_{qn+r})$ and $\lm(f_{qn+s})$ equals $y_q^{n-r} y_{q+1}^s$ and this is divisible by $\lm(g_k)=y_q^{n-s+1} y_{q+1}^{s-1}$. For these values of $r$ and $s$ we remove the syzygies $S_{qn+r,qn+s}$ and we still have a basis.
  
  Next we let $r$ go up from 1 to $n-2$ and we choose $s\in \{ n+ 1,n+2,\ldots , 2n-r-1\}$. We take $g_i=f_{qn+r}, g_j = f_{qn+s}$ and $g_k = f_{qn+r+1}$. The least common multiple of $\lm(f_{qn+r})$ and $\lm(f_{qn+s})$ equals then $y_q^{n-r}y_{q+1}^{2n-s}y_{q+2}^{s-n}$. This is divisible by $\lm(g_k)=y_q^{n-r-1}y_{q+1}^{r+1}$. For these values of $r$ and $s$ we can again remove the syzygies $S_{qn+r,qn+s}$ and we keep a basis.
  
  Finally we choose $r\in \{2,\ldots,n-1\}$, and we let $s$ descend from $2n-1$ down to $2n-r+1$. We take $g_i=f_{qn+r}, g_j = f_{qn+s}$ and $g_k = f_{qn+s-1}$. The least common multiple of $\lm(f_{qn+r})$ and $\lm(f_{qn+s})$ equals then $y_q^{n-r}y_{q+1}^{r}y_{q+2}^{s-n}$. This is divisible by $\lm(g_k)=y_{q+1}^{2n-s+1}y_{q+2}^{s-n-1}$. For these values of $r$ and $s$ we remove once more the syzygies $S_{qn+r,qn+s}$ and we find the basis that we were looking for.\\


  
\textit{Step 2. All the elements of this basis reduce to zero modulo $\{f_i; i\geq n\}$.}

\textit{Step 2.1.} First we note that $S(f_i,f_j)$ reduces to zero modulo $\{f_i; i\geq n\}$ if $\lm(f_i)$ and $\lm(f_j)$ are relatively prime by Proposition~\ref{prop:coprime_leading_monomials}.

\vspace{3mm}

\textit{Step 2.2.}  For the other two cases we will exploit the differential structure of the ideal
  $F_0, F_1, \dots$. We have $F_i = D^i(F_0)$, where $D$ is the $k$-derivation determined by $D(y_j) = y_{j+1}$ for $j\geq 0$. Since $F_0 = y_0^n$ and $F_1 = D(y_0^n) = ny_0^{n-1}y_1$ we have the simple
  relation 
  \begin{equation}\label{eq:basicrelation}
    \mathcal{R}\colon ny_1F_0 - y_0F_1=0.
  \end{equation}
Let $q\geq 1$ and $r\in \{0,\ldots, n-1\}$. Applying $D^{q(n+1)+r}$ to the relation $\mathcal{R}$ (using the generalized Leibniz
  rule) and evaluating in $y_0=0$ yields
\begin{align*}
  0 & = - \binom{q(n+1)+r}{n-1} y_{q(n+1)+ r - n + 1} f_{n}\\
  & \quad +  \sum_{\alpha = n}^{q(n+1)+r-1} \binom{q(n+1)+r}{\alpha} \left[n y_{q(n+1)+ r - \alpha + 1} f_{\alpha} -  y_{q(n+1)+ r - \alpha} f_{\alpha+1} \right]\\
  & \quad + n y_{1} f_{q(n+1)+r} \\
    & = n\binom{q(n+1)+r}{q} y_{q+1}f_{qn+r} +
  n\binom{q(n+1)+r}{q-1}y_{q}f_{qn+r+1}\\ & \quad - \left[\binom{q(n+1)+r}{q+1} y_{q+1}f_{qn+r} +
  \binom{q(n+1)+r}{q}y_{q}f_{qn+r+1}\right] + E\\
  & = \frac{(q(n+1)+r)!\, (n-r)}{(q+1)!\, (qn+r)!} y_{q+1}f_{qn+r} - \frac{(q(n+1)+r)!\, (r+1)}{q!\, (qn+r+1)!} y_{q}f_{qn+r+1} + E,
\end{align*}
where we denote by $E$ the remaining terms in the expression of the
derivative. The polynomial $E$ is a $\mathbb{Z}$-linear combination of
$y_{q(n+1)+ r - n + 1} f_{n},\ldots, y_{q+2}f_{qn+r-1}$ and $y_{q-1}f_{qn+r+2},\ldots, y_1f_{q(n+1)+r}$. Note that
$D^{q(n+1)+r}\mathcal{R}$ has weight $q(n+1)+r+1$ and is homogeneous of degree $n+1$
with respect to the standard grading. The monomial $M=y_q^{n-r}y_{q+1}^{r+1}$ is
maximal among those monomials which are of weight $q(n+1)+r+1$ and degree
$n+1$. It cannot appear in $E$ and it is the least common multiple of the leading monomials of $f_{qn+r}$ and $f_{qn+r+1}$. Hence we conclude that 
\[ \frac{(q(n+1)+r)!\, (n-r)}{(q+1)!\, (qn+r)!} y_{q+1}f_{qn+r} - \frac{(q(n+1)+r)!\, (r+1)}{q!\, (qn+r+1)!} y_{q}f_{qn+r+1} \]
is a multiple of the $S$-polynomial $S(f_{qn+r},f_{qn+r+1})$ (this can also easily be deduced from Proposition~\ref{prop:leadingterm_Fi}).

Moreover, we have seen in the proof of Proposition~\ref{prop:lm_S_poly_consecutive} that the second biggest monomial of weight $q(n+1)+r+1$ and degree $n+1$ in $y_1,y_2,\ldots$ does occur in $S(f_{qn+r},f_{qn+r+1})$. Thus the equation
\[  \frac{(q(n+1)+r)!\, (n-r)}{(q+1)!\, (qn+r)!} y_{q+1}f_{qn+r} - \frac{(q(n+1)+r)!\, (r+1)}{q!\, (qn+r+1)!} y_{q}f_{qn+r+1} = - E \]
shows that $S(f_{qn+r},f_{qn+r+1})$ reduces to zero modulo $\{f_i; i\geq n\}$. 



\vspace{3mm}

\textit{Step 2.3.} Now let $q\geq 1, 1\leq r \leq n-1$. We are left with showing that 
\[S(f_{qn+r},f_{(q+1)n + n-r})\] 
reduces to zero modulo $\{f_i ; i\geq n\}$. We use descending induction on $r$, starting with the initial cases $r=n-1$ and $r=n-2$. For $r=n-1$ we consider the relation $\mathcal{R}$ from equation (\ref{eq:basicrelation}). Analogously to Step 2.2, we derive it $(n+1)(q+1) - 1$ times and put $y_0=0$ to find that
\begin{align*}
  0 & = - \binom{(n+1)(q+1) - 1}{n-1} y_{(n+1)(q+1) - n } f_{n}  \\
  & \quad +  \sum_{\alpha = n}^{(n+1)(q+1) - 2} \binom{(n+1)(q+1) - 1}{\alpha} \left[n y_{(n+1)(q+1) - \alpha } f_{\alpha} -  y_{(n+1)(q+1) - 1 - \alpha} f_{\alpha+1} \right]\\
  & \quad + n y_{1} f_{(n+1)(q+1) - 1} \\
    & = \frac{((q+1)(n+1)-1)!\, (n+1)}{(q+2)!\, (qn+n-1)!} y_{q+2}f_{qn+n-1}\\
    & \quad - \frac{((q+1)(n+1)-1)!\, (n+1)}{q!\, (qn+n+1)!} y_{q}f_{qn+n+1} + E,
\end{align*} 
where $E$ is a $\mathbb{Z}$-linear combination of 
\[  y_{(n+1)(q+1) - n } f_{n} , \ldots, y_{q+3}f_{qn+n-2} , y_{q+1} f_{qn+n} , y_{q-1}f_{qn+n+2} , \ldots , y_{1} f_{(n+1)(q+1) - 1}.        \]
However, the coefficient of $y_{q+1} f_{qn+n}$ in $E$ equals
\[   n \binom{(n+1)(q+1) - 1}{qn+n} - \binom{(n+1)(q+1) - 1}{qn+n-1}  = 0.  \]
It follows that
\[ \frac{((q+1)(n+1)-1)!\, (n+1)}{(q+2)!\, (qn+n-1)!} y_{q+2}f_{qn+n-1} - \frac{((q+1)(n+1)-1)!\, (n+1)}{q!\, (qn+n+1)!} y_{q}f_{qn+n+1} \]
is a multiple of $S(f_{qn+n-1},f_{qn + n + 1})$ since the monomial $y_{q}y_{q+1}^{n-1}y_{q+2}$, which is the least common multiple of the leading monomials of $f_{qn+n-1}$ and $f_{qn + n + 1}$, cannot occur in $E$. Moreover, from Proposition~\ref{prop:leadingterm_Fi} and Proposition~\ref{prop:lm_S_poly_difficult} we conclude that $S(f_{qn+n-1},f_{qn + n + 1})$ reduces to zero modulo $\{f_i; i\geq n\}$. 

\vspace{1mm}

Next consider $r=n-2$ (and $n\geq 3$). We look at the two relations
\[ \mathcal{A}_1 : \frac{(q!)^2 (q+2)!}{\bigl((q+1)(n+1) - 2\bigr)!} \, y_{q+2} \, D^{(q+1)(n+1) - 2} \mathcal{R} \, \bigr|_{y_0=0} = 0 \]
and
\[ \mathcal{A}_2 : \frac{ q! \bigl((q+2)!\bigr)^2 }{\bigl((q+1)(n+1)\bigr)!} \,  y_{q} \,  D^{(q+1)(n+1)}\mathcal{R}\, \bigr|_{y_0=0} = 0.    \]
We expand the left hand side of $\mathcal{A}_1$ as a $\mathbb{Q}$-linear combination of
\[ y_{q+2} y_{q(n+1) } f_n , \ldots , y_{q+2} y_{1} f_{q (n+1)+ n-1 },  \]
and the left hand side of $\mathcal{A}_2$ as a $\mathbb{Q}$-linear combination of
\[ y_{q} y_{q(n+1) + 2 } f_n , \ldots , y_{q} y_{1} f_{(q+1)(n+1)  }.  \]
A computation shows that the coefficient of $y_{q+2}^2f_{qn+n-2}$ in $\mathcal{A}_1$ equals
\[   \frac{(n+2) (q!)^2 }{( qn + n - 2)!} \]
and that the coefficient of $y_{q}^2f_{qn+n+2}$ in $\mathcal{A}_2$ is
\[   - \frac{ (n+2) \bigl((q+2)!\bigr)^2 }{( qn + n + 2)!}.    \] 
It follows from Proposition~\ref{prop:leadingterm_Fi} that a multiple of $S(f_{qn+n-2},f_{qn+n+2})$ occurs in the left hand side of $\mathcal{A}_1 + \mathcal{A}_2$. By a similar computation we see that the term of $\mathcal{A}_1$ containing $y_{q+1}y_{q+2}f_{qn+n-1}$ and the term of $\mathcal{A}_2$ containing $y_{q+1}y_{q}f_{qn+n+1}$ form a multiple of $y_{q+1}S(f_{qn+n-1},f_{qn+n+1})$ in $\mathcal{A}_1 + \mathcal{A}_2$. From the above we already know that we may express this as a linear combination of
\begin{center}
$ y_{q+1}y_{(q+1)(n+1) - n } f_{n} , \ldots, y_{q+1}y_{q+3}f_{qn+n-2} ,$\\
$ y_{q+1}y_{q-1}f_{qn+n+2} , \ldots , y_{q+1}y_{1} f_{(q+1)(n+1) - 1}.$   
\end{center}
Putting everything together, we conclude that we can write $S(f_{qn+n-2},f_{qn+n+2})$ as a linear combination of
\begin{center}
 $y_{q+2} y_{q(n+1) } f_n , \ldots , y_{q+2}y_{q+3}f_{qn+n-3},$\\
 $y_{q+2}y_{q}f_{qn+n},  \ldots, y_{q+2} y_{1} f_{q (n+1) + n-1 }, $\\
 $y_{q} y_{q(n+1) + 2 } f_n , \ldots , y_q y_{q+2} f_{qn+n} ,$ \\
 $y_q y_{q-1} f_{qn+n+3}  , \ldots, y_{q} y_{1} f_{(q+1)(n+1)}, $ \\
 $y_{q+1}y_{(q+1)(n+1) - n } f_{n} , \ldots, y_{q+1}y_{q+3}f_{qn+n-2} ,$\\
 $y_{q+1}y_{q-1}f_{qn+n+2} , \ldots , y_{q+1}y_{1} f_{(q+1)(n+1) - 1}.$   
\end{center}
We want to apply Propositions~\ref{prop:lm_S_poly_difficult} and \ref{prop:leadingterm_Fi} to conclude that $S(f_{qn+n-2},f_{qn+n+2})$ reduces to zero modulo $\{f_i;i\geq n\}$. The only problem is the appearance of $y_q y_{q+2} f_{qn+n}$ (twice) in the above list. However, we can compute that its coefficient in $\mathcal{A}_1 + \mathcal{A}_2$ equals zero!

\vspace{1mm}

Finally, let $r\leq n-3$ (and $n\geq 4$). We look at the relations
\[ \mathcal{A}_1 : \frac{(q!)^{n-r} (q+2)!}{\bigl(q(n+1) +r+1 \bigr)!} \, y_{q+2}^{n-r-1} \, D^{q(n+1)+r+1} \mathcal{R} \, \bigr|_{y_0=0} = 0 \]
and
\[ \mathcal{A}_2 : \frac{ q! \bigl((q+2)!\bigr)^{n-r} }{\bigl( q(n+1) + 2n - r - 1 \bigr)!} \,  y_{q}^{n-r-1} \,  D^{q(n+1) + 2n - r - 1 }\mathcal{R}\, \bigr|_{y_0=0} = 0.    \]
We expand the left hand side of $\mathcal{A}_1$ as a $\mathbb{Q}$-linear combination of
\[ y_{q+2}^{n-r-1} y_{ q(n+1) + r + 2 - n } f_n , \ldots , y_{q+2}^{n-r-1} y_{1} f_{q (n+1)+ r + 1},  \]
and the left hand side of $\mathcal{A}_2$ as a $\mathbb{Q}$-linear combination of
\[ y_{q}^{n-r-1} y_{ q(n+1) + n-r } f_n , \ldots , y_{q}^{n-r-1} y_{1} f_{ q(n+1) + 2n - r - 1  }.  \]
As before, we may check that a multiple of 
\[ S(f_{qn+r},f_{(q+1)n+n-r}) \]
occurs in the left hand side of $\mathcal{A}_1 + \mathcal{A}_2$. Similarly, multiples of
\[  y_{q+1}S(f_{qn+r+1},f_{(q+1)n+n-r-1}) \text{ and } y_qy_{q+2}S(f_{qn+r+2},f_{(q+1)n+n-r-2}) \] 
occur there. By induction, we know that the latter two $S$-polynomials reduce to zero modulo $\{f_i;i\geq n\}$ and we replace them by their expression in terms of the $f_i$. More precisely: $S(f_{qn+r},f_{(q+1)n+n-r})$ can be expressed as a linear combination of terms of the form $M f_{a}$ where $M$ is a monomial of degree $n-r$ and where $\wt\, M + a = (q+1)(2n-r)$. The maximum of $\lm(Mf_a)$ can be attained at several places. A careful analysis learns that
\[ \lm(M f_a) \leq y_{q-1}y_q^{n-r-3}y_{q+1}^{r+3}y_{q+2}^{n-r-1} =: N, \] 
where the latter monomial can occur as
\begin{center}
 $\lm(y_{q-1}y_{q+2}^{n-r-1}f_{qn+r+3}), \lm\bigl(y_{q+1}S(f_{qn+r+1},f_{(q+1)n+n-r-1})\bigr)$, \\ 
 or as $\lm\bigl(y_qy_{q+2}S(f_{qn+r+2},f_{(q+1)n+n-r-2})\bigr)$
\end{center}
by Propositions~\ref{prop:leadingterm_Fi} and \ref{prop:lm_S_poly_difficult}. Here (and from now on) we assume that $q\geq 2$. The case $q=1$ can be treated in a similar way. Only the following expressions of the form $Mf_a$ have $N$ as leading monomial:
\begin{center}
 $y_{q+2}^{n-r-1}y_{q-1}f_{qn+r+3}, y_{q-1}y_q^{n-r-3}y_{q+2}^2 f_{(q+1)n+n-r-3}$, \\
 $ y_{q-1}y_q^{n-r-3}y_{q+1}y_{q+2} f_{(q+1)n+n-r-2}, y_{q-1}y_q^{n-r-3}y_{q+1}^2f_{(q+1)n+n-r-1}$.
\end{center}
Since $\lm\bigl(S(f_{qn+r},f_{(q+1)n+n-r})\bigr)< N$ we cannot yet conclude that this $S$-polynomi\-al reduces to zero, but we see that the four expressions above must give rise to a homogeneous syzygy on the leading terms of
\[ f_{qn+r+3},f_{(q+1)n+n-r-3},f_{(q+1)n+n-r-2},f_{(q+1)n+n-r-1}.\]
From Step 1 we know that a basis for these syzygies is given by 
\begin{center}
  $S_{qn+r+3,(q+1)n+n-r-3} , S_{(q+1)n+n-r-3,(q+1)n+n-r-2}$, \\
  and $S_{(q+1)n+n-r-2,(q+1)n+n-r-1}$.
 \end{center}
By induction and by Step 2.2 we know that the corresponding $S$-polynomials reduce to zero modulo $\{f_i;i\geq n\}$. Using this, we conclude that $S(f_{qn+r},f_{(q+1)n+n-r})$ can be expressed as a linear combination of terms $Mf_a$ as above, and with $\lm(Mf_a) < N$. But we may repeat a similar argument to get rid of all monomials between
\[ \lm\bigl(S(f_{qn+r},f_{(q+1)n+n-r})\bigr) = y_{q-1}y_q^{n-r-2}y_{q+1}^{r+1}y_{q+2}^{n-r} \]
and $N$. We just have to remark that at no stage of this process the monomial
\[   y_q^{n-r} y_{q+1}^{r} y_{q+2}^{n-r}  \] 
appears as leading monomial of a term (since in all terms there are factors $y_{q-1}$, $y_{q-2},\ldots$ or $y_{q+3},y_{q+4},\ldots$ involved). This ends the proof of the lemma.
\end{proof}

\begin{Rem}\label{rem:groebner_basis_in_infinitely_many_variables}
We could have avoided to use polynomial rings in countably many variables. In fact the following holds: the leading monomials of
$(f_n,f_{n+1},\ldots,f_{n+m})$ of weight less than or equal to $n+m$ are generated by $\lm(f_i)$, $n\leq i \leq n+m$. In other words: there exists a Gr\"{o}bner basis of $(f_n, \ldots, f_{n+m})$ such that all added elements will be of weight
larger than or equal to $n+m+1$. 
\end{Rem}

\subsection{Computation of the arc Hilbert-Poincar\'{e} series of the $n$-fold point}
Using Gordon's generalization of the Rogers-Ramanujan identity (Theorem~\ref{thm:rogersramanujan}) we immediately obtain an
explicit description of the arc Hilbert-Poincar\'{e} series of the $n$-fold point by a combinatorial interpretation of the leading ideal
$L(I)$ as it was computed
in Lemma~\ref{lem:gbbasis_nfold_point}.

\begin{Thm}\label{thm:hp_nfold_point}
  The Hilbert-Poincar\'{e} series of the focussed arc algebra $J_\infty^0(X)$ of
  the $n$-fold point $X = \{y^n=0\} \subset \mathbb{A}_k^1$ over the origin equals: 
  $$\HP_{J_\infty^0(X)}(t)=\Hh\cdot
  \prod_{\substack{i\geq 1\\ i \equiv 0, n,n+1 \\ \bmod 2n+1}} (1-t^i).$$
  Equivalently, $$\HP_{J_\infty^0(X)}(t)=\prod_{\substack{i\geq 1\\ i\not\equiv 0,n,n+1\\ \bmod 2n+1}} \frac{1}{1-t^i}
  .$$
\end{Thm}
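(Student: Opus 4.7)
The plan is to combine Lemma~\ref{lem:gbbasis_nfold_point} with Gordon's partition identity (Theorem~\ref{thm:gordon}), via a monomial-count interpretation of the quotient by the leading ideal.

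First I would invoke Lemma~\ref{lem:gbbasis_nfold_point}: the $f_i$ form a Gr\"obner basis of $I$, so $L(I)$ is the monomial ideal generated by the $\lm(f_i)$ for $i\geq n$. Using Proposition~\ref{prop:leadingterm_Fi} and writing $i=qn+r$ with $q\geq 1$ and $0\leq r\leq n-1$, these leading monomials are exactly (up to nonzero scalars) the monomials $y_q^{n-r}y_{q+1}^r$. By the standard fact that a graded algebra and its quotient by the leading ideal have the same Hilbert-Poincar\'e series (appendix), applied in each fixed weight via Remark~\ref{rem:groebner_basis_in_infinitely_many_variables} and Proposition~\ref{prop:approximate_by_jet_spaces},
\[ \HP_{J_\infty^0(X)}(t) \;=\; \HP_{k[y_1,y_2,\ldots]/L(I)}(t), \]
and the latter is the generating function, by weight, of the monomials $y_1^{\alpha_1}y_2^{\alpha_2}\cdots$ that are \emph{not} divisible by any $y_q^{n-r}y_{q+1}^r$ with $q\geq 1$ and $0\leq r\leq n-1$.

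Second I would translate this divisibility-avoidance condition into a condition on exponents. The monomial avoids all such divisors iff, for every $q\geq 1$ and every $0\leq r\leq n-1$, either $\alpha_q<n-r$ or $\alpha_{q+1}<r$. Taking $r=0$ gives $\alpha_q\leq n-1$ for all $q$. For $r=\alpha_{q+1}$ (when $\alpha_{q+1}\leq n-1$) this forces $\alpha_q+\alpha_{q+1}\leq n-1$, and a short argument rules out $\alpha_{q+1}\geq n$. Hence the avoidance condition is equivalent to the single inequality
\[ \alpha_q+\alpha_{q+1}\leq n-1 \qquad \text{for every }q\geq 1. \]

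Third I would identify the monomial $y_1^{\alpha_1}y_2^{\alpha_2}\cdots$ with the partition in which the part $i$ occurs with multiplicity $\alpha_i$; its weight $\sum_i i\alpha_i$ coincides with the size of the partition. The condition $\alpha_q+\alpha_{q+1}\leq n-1$ for all $q$ is exactly the reformulation, in terms of multiplicities, of the Gordon difference condition $\lambda_j-\lambda_{j+n-1}\geq 2$ for $k=n$. Therefore the number of such monomials of weight $N$ equals $B_n(N)$ in the notation of Theorem~\ref{thm:gordon}. Applying Gordon's identity $B_n(N)=A_n(N)$ and summing over $N$ yields
\[ \HP_{J_\infty^0(X)}(t) \;=\; \sum_{N\geq 0} A_n(N)\,t^N \;=\; \prod_{\substack{i\geq 1\\ i\not\equiv 0,n,n+1 \bmod 2n+1}}\frac{1}{1-t^i}, \]
which is the stated formula; multiplying and dividing by $\mathbb{H}$ gives the first form.

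The genuinely hard work has been carried out already in Lemma~\ref{lem:gbbasis_nfold_point}; here the only real obstacles are (i) justifying the passage to infinitely many variables, which one handles by truncating by weight using Remark~\ref{rem:groebner_basis_in_infinitely_many_variables} and Proposition~\ref{prop:approximate_by_jet_spaces}, and (ii) the elementary but slightly fiddly check that the monomial avoidance condition collapses to Gordon's inequality on consecutive multiplicities.
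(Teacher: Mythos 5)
Your proof is correct and follows essentially the same route as the paper: invoke Lemma~\ref{lem:gbbasis_nfold_point} and Proposition~\ref{prop:leadingterm_Fi} to identify $L(I)$, pass to the Hilbert-Poincar\'e series of $k[y_i;i\geq 1]/L(I)$ via Theorem~\ref{thm:hilbert_leading}, and then recognize the surviving monomials as the partitions counted by Gordon's Theorem~\ref{thm:gordon}. You spell out the translation from divisibility avoidance to the multiplicity inequality $\alpha_q+\alpha_{q+1}\leq n-1$ and then to the difference condition $\lambda_j-\lambda_{j+n-1}\geq 2$ more explicitly than the paper (which states this step without detail and even has a minor typo, writing $>2$ rather than $\geq 2$), but this is a fleshing out of the same argument, not a different one.
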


\begin{proof}
  It is a general fact from the theory of Hilbert-Poincar\'{e} series that the Hilbert-Poincar\'{e} series
  of a homogeneous ideal is precisely the Hilbert-Poincar\'{e} series of the leading
  ideal (see Theorem~\ref{thm:hilbert_leading}), i.e.,
  $$\HP_{J_\infty^0(X)}(t)= \HP_{k[y_i;i\geq 1]/L(I)}(t),$$
  where $I$ is as in Lemma~\ref{lem:gbbasis_nfold_point}. By that lemma and Proposition~\ref{prop:leadingterm_Fi} the leading ideal $L(I)$ is generated by monomials of the form $y_{q}^{n-r}y_{q+1}^r$ for $q\geq 1$ and $0\leq r \leq n-1$. Recall
  that the weight of a monomial $y^\alpha=y_{i_1}^{\alpha_1}\cdots
  y_{i_e}^{\alpha_e}$ is precisely $\alpha_1\cdot i_1+ \cdots +
  \alpha_e\cdot i_e$. Thus factoring out $L(I)$ and computing the
  Hilbert-Poincar\'{e} series of the corresponding graded algebra is equivalent to
  counting partitions $(\lambda_1,\ldots,\lambda_s)$ of natural numbers such that $ \lambda_j - \lambda_{j+n-1} > 2$ for all $j$. This is precisely what is counted in Theorem~\ref{thm:gordon}. Hence, we obtain:
  $$\HP_{J_\infty^0(X)}(t)=\prod_{\substack{i\geq 1\\ i\not\equiv 0,n,n+1\\ \bmod 2n+1}} \frac{1}{1-t^i}.$$
  The fact that the right hand side of this equation equals the
  generating series of the number of partitions of $n$ into parts
  which are not congruent to $0,n$ or $n+1$ modulo $2n+1$ is standard in the theory
  of generating series. 
\end{proof}

\subsection{An alternative approach to Rogers-Ramanujan}
In the previous section we used a combinatorial interpretation of the
leading ideal of $I=(f_n,f_{n+1},\ldots)$ to compute the Hilbert-Poincar\'e series of the
corresponding graded algebra. There are commutative algebra methods to
do this as well which yield an alternative approach to the (first) Rogers-Ramanujan
identity. Of course, we consider the case where $n=2$ here, i.e.\ the case of the double point. We will obtain a recursion formula for the
generating functions appearing in the Rogers-Ramanujan identity which has already been considered
by Andrews and Baxter in \cite{andrews_baxter}, though the present
approach gives a natural way to obtain it.\\

Consider the graded algebra $S=k[y_i;i\geq 1]/L(I)$. It is immediate
(see the proof of Theorem~\ref{thm:hp_nfold_point}) that its
Hilbert-Poincar\'e series equals the generating series of the number of partitions of
an integer $n$ without repeated or consecutive parts. Differently, we
compute the Hilbert-Poincar\'e series of $S$ by recursively defining a sequence of
formal power series (generating functions) in $t$ which converges in
the $(t)$-adic topology to the desired Hilbert-Poincar\'e series. We will simply write $k[\geq d]$ for the polynomial ring $k[y_i; i \geq d]$. It will be endowed with the grading $\wt\, y_i = i$. The ideal generated by $y^2_i, y_iy_{i+1}$ for $i\geq d$ in $k[\geq  d]$ will be denoted by $I_d$. We will still write $I_d$ for the ``same'' ideal in $k[\geq d']$ if $d'\leq d$. As usual, if $E$ is an ideal in a ring
$R$ and $f\in R$ then we denote the ideal quotient, i.e., 
\[ \{a\in R\, ;\,  a\cdot f\in E \} \]
by $(E:f)$. \\

Corollary~\ref{cor:hilbert_hilbert_trick_2} implies that $$\HP_{k[\geq
  d]/I_d}(t) = \HP_{k[\geq d]/(I_d,y_d)}(t) + t^d\cdot \HP_{k[\geq
  d]/(I_d:y_d)}(t).$$

Moreover, a quick computation shows the following.

\begin{Prop}
With the notation introduced above we have:
\begin{eqnarray*}
(I_d,y_d)  & = & (y_d,I_{d+1})\\
(I_d:y_d) & = & (y_d,y_{d+1},I_{d+2}).
\end{eqnarray*}
\end{Prop}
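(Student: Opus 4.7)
The plan is to verify each equality at the level of generators, exploiting the fact that $I_d$ is a monomial ideal with the very explicit generating set $\{y_i^2,\ y_iy_{i+1}\ :\ i\geq d\}$.

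For the first identity $(I_d,y_d) = (y_d,I_{d+1})$, I would simply split the generating set of $I_d$ into those generators that involve the variable $y_d$, namely $y_d^2$ and $y_dy_{d+1}$, and the remaining generators $\{y_i^2, y_iy_{i+1}\ :\ i\geq d+1\}$, which are by definition the generators of $I_{d+1}$. Since $y_d^2$ and $y_dy_{d+1}$ both lie in $(y_d)$, adjoining $y_d$ to $I_d$ absorbs them and leaves exactly $(y_d, I_{d+1})$. This direction is immediate.

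For the second identity $(I_d : y_d) = (y_d,y_{d+1},I_{d+2})$, the inclusion $\supseteq$ is a check on each generator: $y_d\cdot y_d = y_d^2\in I_d$, $y_d\cdot y_{d+1} = y_dy_{d+1}\in I_d$, and $y_d\cdot g\in y_d\cdot I_{d+2}\subseteq I_d$ for any $g\in I_{d+2}$. For the inclusion $\subseteq$, the key tool is that $I_d$ is a monomial ideal, so the ideal quotient and sums of monomial ideals behave term-by-term. Given $a\in (I_d:y_d)$, I would reduce $a$ modulo $(y_d,y_{d+1},I_{d+2})$ to obtain a representative $\bar a$ such that no monomial of $\bar a$ is divisible by $y_d$, by $y_{d+1}$, or by any $y_i^2$ or $y_iy_{i+1}$ with $i\geq d+2$. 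Assume for contradiction $\bar a\ne 0$ and let $m$ be a monomial appearing in $\bar a$. Then $y_d\cdot m\in I_d$ forces $y_d\cdot m$ to be divisible by some generator $y_i^2$ or $y_iy_{i+1}$ with $i\geq d$. Since $y_d$ appears to the first power and $y_{d+1}$ does not appear in $y_d\cdot m$, the generator cannot be $y_d^2$, $y_dy_{d+1}$, $y_{d+1}^2$ or $y_{d+1}y_{d+2}$. Hence it must be $y_i^2$ or $y_iy_{i+1}$ for some $i\geq d+2$, which would already divide $m$ itself, contradicting the reducedness of $\bar a$. Therefore $\bar a = 0$ and $a\in(y_d,y_{d+1},I_{d+2})$.

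The two identities are really just bookkeeping with a monomial ideal; there is no serious obstacle. The only point requiring care is the contradiction argument in the second identity, where one must exclude all possible monomial generators of $I_d$ that could divide $y_d\cdot m$, but the short gap between the indices $d$ and $d+2$ together with the $y_{d+1}y_{d+2}$ case is exactly what makes the reduction modulo $(y_d,y_{d+1})$ suffice (as opposed to only modulo $(y_d)$).
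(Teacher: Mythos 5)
Your proof is correct. The paper supplies no argument here --- it dismisses the claim as ``a quick computation'' --- and your verification is exactly the natural bookkeeping one would do: split the generators of $I_d$ for the first identity, and for the second use that $(y_d,y_{d+1},I_{d+2})$ is a monomial ideal to pass to a normal form $\bar a$ and rule out, generator by generator, every way a minimal generator of $I_d$ could divide $y_d m$ for a surviving monomial $m$.
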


This immediately implies 
 $$\HP_{k[\geq  d]/I_d}(t) = \HP_{k[\geq d+1]/I_{d+1}}(t) + t^d\cdot \HP_{k[\geq
  d+2]/I_{d+2}}(t).$$

For simplicity of notation let $h(d)$ stand for $\HP_{k[\geq
  d]/I_d}(t)$. Then:
\begin{equation}\label{eq:recursion}
h(d) = h(d+1) + t^d\cdot h(d+2)
\end{equation}
and:

\begin{samepage}
\begin{Prop}\label{prop:recursion_double}
  For the Hilbert-Poincar\'e series $\HP_{J_\infty^0(X)}(t)=h(1)$ we obtain
$$h(1) = A_d\cdot h(d) + B_{d+1}\cdot h(d+1)$$
for $d\geq 1$ with $A_i,B_i\in k[[t]]$ fulfilling the following
recursion
\begin{eqnarray*}
  A_d & = & A_{d-1} + B_d\\
  B_{d+1} & = & A_{d-1}\cdot t^{d-1}
\end{eqnarray*}
with initial conditions $A_1=A_2=1$ and $B_2=0, B_3=t$.
\end{Prop}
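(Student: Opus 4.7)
The plan is to proceed by induction on $d$, using the recursion (\ref{eq:recursion}) $h(d) = h(d+1) + t^d\, h(d+2)$ as the only non-trivial input. The initial conditions $A_1=A_2=1$, $B_2=0$, $B_3=t$ are tailored so that both base cases $d=1$ and $d=2$ are immediate: for $d=1$ the claimed equality $h(1) = 1\cdot h(1) + 0\cdot h(2)$ is tautological, while for $d=2$ the equality $h(1) = h(2) + t\cdot h(3)$ is exactly equation (\ref{eq:recursion}) specialized to $d=1$.

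For the inductive step, assuming $h(1) = A_d\, h(d) + B_{d+1}\, h(d+1)$, I would substitute $h(d) = h(d+1) + t^d\, h(d+2)$ into this identity. Collecting coefficients gives
\[ h(1) = (A_d + B_{d+1})\, h(d+1) + A_d\, t^d\, h(d+2). \]
So the induction closes provided we define $A_{d+1} := A_d + B_{d+1}$ and $B_{d+2} := A_d\, t^d$, which is precisely the recursion stated for the sequences $(A_i)$ and $(B_i)$ (after re-indexing $d \mapsto d+1$ in the statement's recursion $A_d = A_{d-1} + B_d$ and $B_{d+1} = A_{d-1}\, t^{d-1}$). One should also check that the proposed values for $A_2, B_3$ are consistent with these recursions applied to $A_1, B_2$, which is a direct computation: $A_1 + B_2 = 1 + 0 = 1 = A_2$ and $A_1 \cdot t^1 = t = B_3$.

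There is no real obstacle; the argument is a one-line induction, and the only mild subtlety is making sure that the indexing of the recursions for $A_d$ and $B_{d+1}$ matches the indexing forced by the substitution step. Once that bookkeeping is in place, the proof is complete, and as a byproduct one can observe that eliminating $B_d$ via $B_d = A_{d-2}\, t^{d-2}$ gives the single-sequence recursion $A_d = A_{d-1} + t^{d-2} A_{d-2}$ mentioned earlier in the introduction, which is the Andrews--Baxter recursion leading to the Rogers--Ramanujan identity.
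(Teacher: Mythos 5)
Your proof is correct and follows essentially the same argument as the paper: both establish the base cases from $h(1)=h(2)+t\,h(3)$ and then close the induction by substituting $h(d)=h(d+1)+t^d\,h(d+2)$ and reading off the recursion for $A$ and $B$ by collecting coefficients. The only difference is that you spell out the $d=1$ tautology and the index bookkeeping a bit more explicitly, which is fine.
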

\end{samepage}

\begin{proof}
  By the discussion above $h(1)$ equals $ h(2) + t\cdot h(3)$; hence,
  $A_1=A_2=1$ and $B_2=0$, $B_3=t$. Assume now that $$h(1) = A_d\cdot
  h(d) + B_{d+1}\cdot h(d+1)$$ holds for some $d\geq 2$. By
  equation (\ref{eq:recursion}) substituting for $h(d)$ yields
  \begin{eqnarray*}
    h(1) & = & A_d\cdot (h(d+1) + t^d\cdot h(d+2)) + B_{d+1}\cdot h(d+1)\\
    & = & (A_d + B_{d+1})\cdot h(d+1) + (A_d\cdot t^d)\cdot h(d+2)
  \end{eqnarray*}
  from which the assertion follows.
\end{proof}

If $(s_d)_{d\in \N}$ is a sequence of formal power series $s_d\in
k[[t]]$ we will denote by $\lim s_d$ its limit -- if it exists -- in
the $(t)$-adic topology. Since $\ord\, B_d \geq d-2$ it is immediate
that both $\lim A_d$ and $\lim B_d$ exist, in fact: $\lim B_d=0$ and
$$h(1) = \lim A_d.$$
The recursion from Proposition \ref{prop:recursion_double} can easily
be simplified. We obtain:

\begin{Cor}\label{cor:recursion_rogersramanujan}
  With the above introduced notation
  $\HP_{J_\infty^0(X)}(t)=\lim A_d$ where $A_d$ fulfills $$A_d = A_{d-1} + t^{d-2}\cdot A_{d-2}$$
  with initial conditions $A_1=A_2=1$.
\end{Cor}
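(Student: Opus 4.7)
The plan is to derive the single-sequence recursion by eliminating the auxiliary sequence $B_d$ from the two-term system of Proposition~\ref{prop:recursion_double}. From that proposition we have
\[
A_d = A_{d-1} + B_d \qquad \text{and} \qquad B_{d+1} = A_{d-1} \cdot t^{d-1}.
\]
First I would shift the index in the second relation, rewriting it (valid for $d \geq 3$) as $B_d = A_{d-2}\cdot t^{d-2}$. Substituting this into the first relation immediately yields
\[
A_d \;=\; A_{d-1} + t^{d-2}\,A_{d-2} \qquad (d \geq 3),
\]
which is the claimed recursion.

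Next I would verify the initial conditions. The values $A_1 = A_2 = 1$ are already part of Proposition~\ref{prop:recursion_double}, so they carry over directly. As a sanity check one can compute $A_3$ two ways: from the original recursion $A_3 = A_2 + B_3 = 1 + t$, and from the simplified one $A_3 = A_2 + t^{1}\,A_1 = 1 + t$, which agree.

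Finally, the convergence statement $\HP_{J_\infty^0(X)}(t) = \lim A_d$ has already been established in the paragraph preceding the corollary: since $\ord\, B_d \geq d-2$, the sequence $(A_d)$ converges in the $(t)$-adic topology and its limit equals $h(1) = \HP_{J_\infty^0(X)}(t)$. There is no genuine obstacle here — the content of the corollary is purely an algebraic simplification of the preceding proposition, and the only thing to be careful about is matching the index shifts when passing from $B_{d+1}$ to $B_d$ and checking that the base cases $d=1,2$ are consistent with both formulations.
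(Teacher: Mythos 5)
Your proof is correct and matches the paper's intended argument: the paper leaves this as an easy simplification of Proposition~\ref{prop:recursion_double}, and your elimination of $B_d$ via the index shift $B_d = A_{d-2}\,t^{d-2}$ together with the convergence observation already made in the text is precisely that simplification.
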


The recursion appearing in this corollary is well-known since Andrews and Baxter \cite{andrews_baxter}. Its limit is
precisely the infinite product
\[ \prod_{\substack{i\geq 1\\ i\equiv 1, 4 \bmod 5}} \frac{1}{1-t^i}, \]
i.e., the generating series of the number of partitions with parts equal to $1$ or $4$ modulo $5$. Note, that our
construction gives the generating series $G_i$ defined in the paper by Andrews and Baxter an interpretation as
Hilbert-Poincar\'e series of the quotients $k[\geq i]/I_i$. This immediately implies that the series $G_i$ are of the
form $G_i = 1 + \sum_{j\geq i} G_{ij} t^j$ (this observation was called an `empirical hypothesis' by Andrews and Baxter).



\section{Appendix: Hilbert-Poincar\'e series and Gr\"obner bases}\label{appendix}
In this section we collect some of the basics about the theory of
Hilbert-Poincar\'{e} series. For a detailed introduction, especially
proofs, we refer to \cite{greuel_pfister}. We also recall some results on Gr\"obner basis theory from \cite{Cox_Little_Oshea}.\\

Let $A$ be a ($\Z$-)graded $k$-algebra and let $M=\oplus_{i\in \Z} M_i$ be a
graded $A$-module with $i$th graded pieces $A_i$ and $M_i$ of finite
$k$-dimension. The {\it Hilbert function $H_M\colon \Z \ra \Z$ of $M$}
is defined by $H_M(i)= \dim_k M_i$, and its corresponding generating
series $$\HP_M(t)=\sum_{i\in \Z}H_M(i)t^i \in \Z((t))$$ is
called the {\it Hilbert-Poincar\'{e} series of $M$}. It is well-known
that if $A$ is a Noetherian $k$-algebra generated by homogeneous
elements $x_1, \ldots, x_n$ of degrees $d_1,\ldots, d_n$ and $M$ is a finitely
generated $A$-module
then $$\HP_M(t)=\frac{Q_M(t)}{\prod_{i=1}^n(1-t^{d_i})}$$ for some
$Q_M(t) \in \Z[t]$ which is called the {\it (weighted) first Hilbert
 series of $M$}. If $A$ respectively $M$ is non-Noetherian then the
Hilbert-Poincar\'{e} series of $M$ need not be rational anymore. For
the rest of this section we assume that the polynomial ring
$k[x_1,\ldots, x_n]$ is graded (not necessarily standard graded). The
notions of homogeneous ideal and degree are to be understood
relative to this grading. If $M$ is graded then for any integer $d$ we
write $M(d)$ for the {\it $d$th twist of $M$}, i.e., the graded
$A$-module with $M(d)_i=M_{i+d}$.\\

The following Lemma follows immediately from additivity of dimension:  

\begin{samepage}
\begin{Lem}[Lemma 5.1.2 in \cite{greuel_pfister}]\label{lem:hilbert_trick_1}
  Let $A$ and $M$ be as above. Let $d$ be a non-negative integer,
  $f\in A_d$ and $\varphi\colon M(-d)\ra M$ be defined by $\varphi(m)
  = f\cdot m$; then $\ker(\varphi)$ and $\coker(\varphi)$ are graded
  $A/(f)$-modules with the induced gradings and $$\HP_M(t) = t^d\cdot
  \HP_M(t) + \HP_{\coker(\varphi)}(t) - t^d\cdot \HP_{\ker(\varphi)}(t).$$
\end{Lem}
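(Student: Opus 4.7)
The plan is to read off the identity from the four-term exact sequence of graded $A$-modules
\[ 0 \lra \ker(\varphi) \lra M(-d) \overset{\varphi}{\lra} M \lra \coker(\varphi) \lra 0, \]
where $\varphi$ preserves degree because $f$ is homogeneous of degree $d$. First I would split this into the two short exact sequences
\[ 0 \lra \ker(\varphi) \lra M(-d) \lra \im(\varphi) \lra 0 \]
and
\[ 0 \lra \im(\varphi) \lra M \lra \coker(\varphi) \lra 0, \]
each of which consists of degree-preserving maps of graded $A$-modules.

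Next I would invoke additivity of the Hilbert-Poincar\'{e} series on short exact sequences of graded modules — which is nothing more than additivity of $\dim_k$ in each homogeneous piece — together with the elementary shift identity $\HP_{M(-d)}(t) = t^d \cdot \HP_M(t)$ coming directly from the definition of the twist. Adding up the two resulting equalities and eliminating the auxiliary term $\HP_{\im(\varphi)}(t)$ produces the stated formula. The one bookkeeping point is the grading convention for $\ker(\varphi)$: the factor $t^d$ appearing in front of $\HP_{\ker(\varphi)}(t)$ corresponds to viewing the kernel as the submodule $\{m\in M\, :\, f\cdot m=0\}$ of $M$ with its intrinsic grading, rather than as a graded submodule of $M(-d)$.

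Finally, that $\ker(\varphi)$ and $\coker(\varphi)$ are graded $A/(f)$-modules with their induced gradings is immediate: multiplication by $f$ annihilates $\ker(\varphi)$ by definition and annihilates $\coker(\varphi) = M/fM$ tautologically, and in both cases the action respects the homogeneous decomposition since $f$ is itself homogeneous. There is no serious obstacle in this argument; the entire content of the lemma is the additivity of Hilbert-Poincar\'{e} series on short exact sequences and the shift identity, both of which are completely formal.
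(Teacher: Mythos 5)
Your argument is correct and follows exactly the route the paper points to (``additivity of dimension''), just unpacked via the four-term exact sequence and the twist identity $\HP_{M(-d)}(t)=t^d\HP_M(t)$. You are right to flag the grading convention on $\ker(\varphi)$ as the one non-formal point: as written, $\ker(\varphi)$ is a graded submodule of $M(-d)$, so its $M(-d)$-induced Hilbert--Poincar\'e series would already carry the shift and the formula would read $\HP_M=t^d\HP_M+\HP_{\coker\varphi}-\HP_{\ker\varphi}$; the $t^d$ in front of $\HP_{\ker\varphi}$ in the stated formula only appears once one identifies $\ker(\varphi)$ with the submodule $\{m\in M: fm=0\}$ of $M$ and grades it there, precisely as you explain.
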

\end{samepage}

As an immediate consequence we obtain the useful:

\begin{samepage}
\begin{Cor}[Lemma 5.2.2 in
  \cite{greuel_pfister}]\label{cor:hilbert_hilbert_trick_2}
  Let $I\subseteq k[x_1,\ldots, x_n]$ be a homogeneous ideal, and let
  $f\in k[x_1,\ldots, x_n]$ be a homogeneous polynomial of degree $d$
  then $$\HP_{k[x]/I}(t) = \HP_{k[x]/(I,f)}(t) + t^d\HP_{k[x]/(I:f)}(t).$$
\end{Cor}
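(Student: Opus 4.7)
The plan is to deduce the corollary as a direct application of Lemma~\ref{lem:hilbert_trick_1}, taking $A = k[x_1,\ldots,x_n]$, $M = k[x]/I$, and the given $f \in A_d$. The multiplication-by-$f$ map $\varphi\colon M(-d) \to M$ provides the setup of the lemma, so the task reduces to identifying $\ker(\varphi)$ and $\coker(\varphi)$ explicitly, computing their Hilbert-Poincar\'e series, and substituting into the lemma's formula.

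For the cokernel, $\coker(\varphi) = M/fM$ is canonically isomorphic as a graded module to $k[x]/(I,f)$, giving $\HP_{\coker(\varphi)}(t) = \HP_{k[x]/(I,f)}(t)$. For the kernel, a class $\overline{g} \in k[x]/I$ satisfies $f\overline{g} = 0$ precisely when $fg \in I$, i.e.\ when $g \in (I:f)$, so $\ker(\varphi) = (I:f)/I$ (with the grading inherited from $M$, in the convention of Lemma~\ref{lem:hilbert_trick_1}).

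To convert $\HP_{(I:f)/I}(t)$ into something involving $\HP_{k[x]/(I:f)}(t)$, I would use the short exact sequence of graded modules
\[ 0 \to (I:f)/I \to k[x]/I \to k[x]/(I:f) \to 0, \]
which is well-defined because both $I$ and $(I:f)$ are homogeneous ($(I:f)$ inheriting homogeneity from the homogeneity of $I$ and of $f$). Additivity of Hilbert-Poincar\'e series then yields
\[ \HP_{(I:f)/I}(t) = \HP_{k[x]/I}(t) - \HP_{k[x]/(I:f)}(t). \]

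Substituting both identifications into the formula $\HP_M(t) = t^d \HP_M(t) + \HP_{\coker(\varphi)}(t) - t^d \HP_{\ker(\varphi)}(t)$ from Lemma~\ref{lem:hilbert_trick_1}, the two $t^d \HP_{k[x]/I}(t)$ contributions cancel, leaving precisely the claimed identity. The only subtle point is aligning the grading convention for $\ker(\varphi)$ with the one used in the lemma (whether the kernel is regarded as a submodule of $M$ itself or of the twist $M(-d)$); once that is pinned down, the remainder is a short algebraic manipulation rather than any serious obstacle.
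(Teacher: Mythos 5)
The paper gives no written proof, presenting the statement simply as ``an immediate consequence'' of Lemma~\ref{lem:hilbert_trick_1}; your argument supplies exactly the intended derivation (apply the lemma with $M = k[x]/I$, identify $\coker(\varphi)\cong k[x]/(I,f)$ and $\ker(\varphi)\cong (I:f)/I$, then use the short exact sequence $0\to (I:f)/I \to k[x]/I \to k[x]/(I:f)\to 0$ and cancel), and it is correct, including your explicit caution about the grading convention for $\ker(\varphi)$, which is what makes the $t^d$ coefficients come out right.
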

\end{samepage}

For homogeneous ideals the leading ideal already determines the
Hilbert-Poincar\'{e} series. After fixing a monomial order, the \textit{leading ideal} $L(I)$ of an ideal $I$ in $k[x_1,\ldots,x_n]$ is defined as the (monomial) ideal generated by the leading monomials of all elements in $I$. Then one has:

\begin{samepage}
\begin{Thm}[Theorem 5.2.6 in
  \cite{greuel_pfister}]\label{thm:hilbert_leading} Let $>$ be any
  monomial ordering on $k[x_1,\ldots, x_n]$, let $I\subseteq k[x]$
  be a homogeneous ideal and denote by $L(I)$ its leading ideal with
  respect to $>$. Then $$\HP_{k[x]/I}(t) = \HP_{k[x]/L(I)}(t).$$
\end{Thm}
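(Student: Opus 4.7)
The plan is to exhibit an explicit monomial $k$-basis that simultaneously serves $k[x]/I$ and $k[x]/L(I)$, and to check that this basis respects the grading so that the two Hilbert-Poincar\'e series agree term by term. First I would verify that $L(I)$ is itself a homogeneous ideal. Since $I$ is homogeneous, any $f\in I$ decomposes as $f = f_{d_1} + f_{d_2} + \cdots$ with each homogeneous component $f_{d_j}$ again in $I$, and $\lm(f)$ coincides with $\lm(f_{d_j})$ for exactly one $j$. In particular each generator of $L(I)$ is a monomial of some well-defined weight, so $L(I)$ is generated by homogeneous monomials.

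Let $B$ be the set of monomials of $k[x_1,\ldots,x_n]$ not lying in $L(I)$; call these the standard monomials. The main step is to show that the images of $B$ form a $k$-basis of $k[x]/I$. For spanning, I would apply the usual division-algorithm reduction: given $g\in k[x]$, if $\lm(g)\in L(I)$ then some generator $m$ of $L(I)$ divides $\lm(g)$, and by the definition of $L(I)$ there is $h\in I$ with $\lm(h)=m$; subtracting an appropriate multiple $c\,(\lm(g)/m)\,h$ strictly lowers $\lm(g)$ in the monomial order. Well-foundedness of the monomial order forces this process to terminate in a polynomial supported on $B$. For linear independence modulo $I$, suppose $\sum c_i m_i\in I$ with $m_i\in B$ and not all $c_i=0$. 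Then the leading monomial of this sum is among the $m_i$, hence standard; but being in $I$ it would also lie in $L(I)$, a contradiction.

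The analogous statement for $L(I)$ is immediate: because $L(I)$ is a monomial ideal, its complementary monomials $B$ tautologically form a $k$-basis of $k[x]/L(I)$. Crucially, each element of $B$ is a single monomial and hence homogeneous of a definite weight, so setting $B_d := \{m\in B : \wt\, m = d\}$ we obtain a graded basis of \emph{both} quotients. Therefore
\[
 \dim_k (k[x]/I)_d \;=\; |B_d| \;=\; \dim_k (k[x]/L(I))_d
\]
for every $d$, and summing against $t^d$ yields $\HP_{k[x]/I}(t) = \HP_{k[x]/L(I)}(t)$.

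There is no serious obstacle in this argument; the only point that requires care is the bookkeeping of weights, which is controlled by the opening observation that $L(I)$ inherits homogeneity from $I$ and that each standard monomial is homogeneous. Note that the monomial ordering need not be degree-compatible: homogeneity of $I$ alone suffices, since the reduction step preserves the weight of each homogeneous component separately.
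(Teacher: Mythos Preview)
The paper does not give its own proof of this statement: it is quoted in the appendix as Theorem~5.2.6 of Greuel--Pfister, with all proofs deferred to that reference. Your argument is the standard Macaulay-basis proof and is correct.

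Two minor points worth tightening. First, your opening paragraph is more elaborate than needed: $L(I)$ is a monomial ideal and hence automatically homogeneous under any grading, so there is no need to trace which homogeneous component of $f$ contributes $\lm(f)$. Second, in the spanning step you should choose $h$ homogeneous. As written, you pick $h\in I$ with $\lm(h)=m$, but $h$ need not be homogeneous, and then the subtraction could introduce terms of other weights. The fix is immediate: replace $h$ by its homogeneous component of weight $\wt(m)$, which still lies in $I$ (because $I$ is homogeneous) and still has leading monomial $m$ (because $m$ was already the largest monomial in all of $h$). With that choice the reduction of a weight-$d$ element stays in weight $d$, and your claim that $B_d$ is a basis of $(k[x]/I)_d$ follows directly. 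Your final sentence alludes to this but does not make the substitution explicit.
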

\end{samepage}

To compute the leading ideal one can use Gr\"obner bases. Let $I$ be an ideal in the polynomial ring $k[x_1,\ldots, x_n]$ with a fixed monomial order $<$. Then $\{g_1,\ldots, g_l\}\subset I$ is called a \textit{Gr\"obner basis} of $I$ if $L(I)$ is generated by $\{ \lm(g_i) \,;\, 1\leq i\leq l\}$, where we write $\lm$ for `leading monomial'. For $f\in k[x_1,\ldots, x_n]$ and a subset $H = \{h_1,\ldots, h_s\}$ of $k[x_1,\ldots ,x_n]$ one says that $f$ \textit{reduces to zero modulo} $H$ if 
\[  f = a_1h_1 + \cdots + a_sh_s \]
for $a_i \in k[x_1,\ldots, x_n]$, such that $\lm(f) \geq \lm(a_ih_i) $ whenever $a_ih_i\neq 0$. One writes $f \rightarrow_H 0$.

Finally we need the definition of a syzygy. Let $\mathcal{F}=(f_1,\ldots ,f_s)\in (k[x_1,\ldots,x_n])^s$. A \textit{syzygy} on the leading terms of the $f_i$ is an $s$-tuple $(h_1,\ldots,h_s) \in (k[x_1,\ldots,x_n])^s$ such that 
\[  \sum_{i=1}^s h_i\, \lt(f_i) = 0,\]
where $\lt$ stands for `leading term'. The set of syzygies $S(\mathcal{F})$ on the leading terms of $\mathcal{F}$ form a $k[x_1,\ldots,x_n]$-submodule of $(k[x_1,\ldots,x_n])^s$. A generating set of this module is called a \textit{basis}. With $F=\{f_1,\ldots, f_s\}$, we will say that a syzygy $(h_1,\ldots, h_s)\in S(\mathcal{F})$ \textit{reduces to zero modulo} $F$ if
\[  \sum_{i=1}^s h_if_i \rightarrow_F 0.  \]
If each $h_i$ consists of a single term $c_ix^{\alpha_i}$ and $x^{\alpha_i}\lm(f_i)$ is a fixed monomial $x^{\alpha}$ if $c_i\neq 0$, then the syzygy $(h_1,\ldots, h_s)$ is called \textit{homogeneous of multidegree} $\alpha$. For $i<j$ let $x^{\gamma}$ be the least common multiple of the leading monomials of $f_i$ and $f_j$. One calls
\[ S(f_i,f_j) := \frac{x^{\gamma}}{\lt(f_i)}\, f_i - \frac{x^{\gamma}}{\lt(f_j)}\, f_j \] 
the \textit{$S$-polynomial} of $f_i$ and $f_j$. It gives rise to the homogeneous syzygy
\[  S_{i,j} := \frac{x^{\gamma}}{\lt(f_i)}\, e_i - \frac{x^{\gamma}}{\lt(f_j)}\, e_j, \]
where $e_i$ and $e_j$ denote standard basis vectors of $(k[x_1,\ldots,x_n])^s$. Then we have the following results:

\begin{Prop}[Proposition 4 p.103 in \cite{Cox_Little_Oshea}]\label{prop:coprime_leading_monomials} Let $G\subset k[x_1,\ldots,x_n]$ be a finite set. Assume that $f,g\in G$ have relatively prime leading monomials. Then $S(f,g)\rightarrow_G 0$.
\end{Prop}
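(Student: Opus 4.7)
The plan is to prove the statement by exhibiting an explicit representation $S(f,g) = a\cdot f + b\cdot g$ with $a,b \in k[x_1,\ldots,x_n]$ satisfying $\lm(a\cdot f),\lm(b\cdot g) \leq \lm(S(f,g))$; since $f,g \in G$, this will by definition give $S(f,g)\rightarrow_G 0$.

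My approach is direct calculation. I would first decompose $f = \lt(f) + p$ and $g = \lt(g) + q$, so that every monomial in $p$ is strictly smaller than $\lm(f)$ and every monomial in $q$ is strictly smaller than $\lm(g)$; let $c_f,c_g\in k^{\times}$ denote the leading coefficients. Since $\lm(f)$ and $\lm(g)$ are coprime, their least common multiple is simply $\lm(f)\lm(g)$, whence
\[ S(f,g) \;=\; \frac{\lm(g)}{c_f}\,f \;-\; \frac{\lm(f)}{c_g}\,g. \]
Substituting the decompositions the $\lm(f)\lm(g)$-terms cancel, and a short manipulation using $c_g\lm(g) = g-q$ and $c_f\lm(f) = f-p$ produces
\[ S(f,g) \;=\; \frac{pg-qf}{c_f c_g} \;=\; \Bigl(-\tfrac{q}{c_f c_g}\Bigr)f \;+\; \Bigl(\tfrac{p}{c_f c_g}\Bigr)g, \]
which is the desired $\{f,g\}$-representation.

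It will then remain to verify the leading-monomial bound. The leading monomials of the two summands are $\lm(q)\lm(f)$ and $\lm(p)\lm(g)$, each strictly below $\lm(f)\lm(g)$. The crux of the argument --- and the only place where coprimality is really used --- is to show these two monomials are distinct. Indeed, were $\lm(p)\lm(g) = \lm(q)\lm(f)$, then $\gcd(\lm(f),\lm(g))=1$ would force $\lm(f)\mid\lm(p)$, hence $\lm(p)\geq\lm(f)$ in the monomial order, contradicting $\lm(p)<\lm(f)$. Consequently no cancellation of leading terms can occur in $pg-qf$, so $\lm(S(f,g)) = \max(\lm(p)\lm(g),\lm(q)\lm(f))$, and both summands above then have leading monomial at most $\lm(S(f,g))$, giving $S(f,g)\rightarrow_G 0$. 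The only subtle step I anticipate is the distinctness assertion just sketched (with the trivial degenerate cases $p=0$ or $q=0$ handled on the side); everything else is mechanical, so no serious obstacle is expected.
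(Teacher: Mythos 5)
The paper states this proposition without proof, citing it directly from Cox--Little--O'Shea, and your argument is precisely the standard one given there: writing $f=\lt(f)+p$, $g=\lt(g)+q$, cancelling the leading terms in $S(f,g)$ to obtain $S(f,g)=(pg-qf)/(c_fc_g)$, and then using coprimality to show that $\lm(p)\lm(g)\neq\lm(q)\lm(f)$ so no leading-term cancellation occurs. Your proof is correct, including the divisibility step ($\lm(f)\mid\lm(p)$ forces $\lm(f)\leq\lm(p)$ for any monomial order) and the aside on the degenerate cases $p=0$ or $q=0$.
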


\begin{Prop}[Proposition 8 p.105 in
  \cite{Cox_Little_Oshea}]\label{prop:S_polynomials_form_basis} For an $s$-tuple of polynomials $(f_1,\ldots ,f_s)\in (k[x_1,\ldots,x_n])^s$ we have that the set of all $S_{i,j}$ form a homogeneous basis of the syzygies on the leading terms of the $f_i$.  
\end{Prop}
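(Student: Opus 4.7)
The plan is to establish the statement in two stages: first, that the module of syzygies on the leading terms is spanned as a $k[x_1,\ldots,x_n]$-module by its homogeneous elements; and second, that every homogeneous syzygy is a $k[x_1,\ldots,x_n]$-linear combination of the $S_{i,j}$.

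First I would handle the decomposition into homogeneous components. Given an arbitrary syzygy $h=(h_1,\ldots,h_s)$, expand each $h_i$ as a sum of single terms, $h_i=\sum_k c_{i,k}\,x^{\beta_{i,k}}$. Plugging this into $\sum_{i} h_i\,\lt(f_i)=0$ and grouping terms according to the multidegree $\alpha$ defined by $x^{\beta_{i,k}}\lm(f_i)=x^{\alpha}$, the fact that distinct monomials in $k[x_1,\ldots,x_n]$ are $k$-linearly independent forces the contribution from each fixed multidegree to vanish separately. This yields a finite decomposition $h=\sum_{\alpha} h^{(\alpha)}$, where each $h^{(\alpha)}$ is by construction homogeneous of multidegree $\alpha$ in the sense of the appendix.

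Next I would show that each homogeneous syzygy is a combination of $S$-syzygies. Write $h^{(\alpha)}_i=c_i\,x^{\alpha_i}$ and set $J=\{i:c_i\neq 0\}$, so $x^{\alpha_i}\lm(f_i)=x^{\alpha}$ for all $i\in J$. Comparing the coefficient of $x^{\alpha}$ in $\sum_{i\in J}c_i x^{\alpha_i}\lt(f_i)=0$ gives the scalar relation
\[ \sum_{i\in J} c_i\,\lc(f_i) \;=\; 0. \]
Fix any $j_0\in J$. I claim that
\[ h^{(\alpha)} \;=\; \sum_{i\in J\setminus\{j_0\}} c_i\,\lc(f_i)\left(\frac{x^{\alpha}}{\lt(f_i)}\,e_i - \frac{x^{\alpha}}{\lt(f_{j_0})}\,e_{j_0}\right), \]
which one verifies by checking components: the $e_i$-component for $i\neq j_0$ reproduces $c_i x^{\alpha_i}$ since $x^{\alpha}/\lm(f_i)=x^{\alpha_i}$, and the $e_{j_0}$-component collapses to $c_{j_0}x^{\alpha_{j_0}}$ precisely because $\sum_{i\in J}c_i\lc(f_i)=0$. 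Finally, for each $i\in J\setminus\{j_0\}$, the monomial $x^{\alpha}$ is a common multiple of $\lm(f_i)$ and $\lm(f_{j_0})$, hence is divisible by their least common multiple $x^{\gamma_{i,j_0}}$, so
\[ \frac{x^{\alpha}}{\lt(f_i)}\,e_i - \frac{x^{\alpha}}{\lt(f_{j_0})}\,e_{j_0} \;=\; \frac{x^{\alpha}}{x^{\gamma_{i,j_0}}}\cdot S_{i,j_0}, \]
up to the sign convention when $j_0<i$. This exhibits $h^{(\alpha)}$, and hence $h$, as a $k[x_1,\ldots,x_n]$-linear combination of the $S_{i,j}$.

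The combination of both stages shows that the $S_{i,j}$ generate the syzygy module, while the observation that each $S_{i,j}$ is itself homogeneous of multidegree $\gamma_{i,j}$ by its very definition shows that this is in fact a \emph{homogeneous} generating set, i.e., a homogeneous basis in the sense used in the appendix. The only real obstacle is the telescoping bookkeeping: one must be careful that the scalar identity $\sum_{i\in J}c_i\lc(f_i)=0$ is used exactly once, at the $j_0$-entry, and that the divisibility $x^{\gamma_{i,j_0}}\mid x^{\alpha}$ is genuinely automatic from the homogeneity condition rather than an extra hypothesis. Everything else is a routine check.
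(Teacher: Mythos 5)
Your argument is correct and is essentially the standard proof given in the cited source (Cox--Little--O'Shea, Proposition~8 of Chapter~2, \S 9); the paper itself offers no proof and simply refers there, so there is no internal argument to compare against. The only cosmetic difference is that you fix a single pivot $j_0$ and write $h^{(\alpha)}$ as a ``fan'' of $S_{i,j_0}$-syzygies, whereas the textbook orders the support $J=\{i_1<\cdots<i_m\}$ and telescopes through consecutive pairs $S_{i_1,i_2}, S_{i_2,i_3},\ldots$; both bookkeeping schemes discharge the identity $\sum_{i\in J}c_i\lc(f_i)=0$ correctly and produce the same conclusion.
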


\begin{Thm}[Theorem 9 p.106 in
  \cite{Cox_Little_Oshea}]\label{thm:criterion_for_Groebner_basis} Let $\mathcal{G}=(g_1,\ldots,g_s)$ be an $s$-tuple of polynomials and let $I$ be the ideal of $k[x_1,\ldots,x_n]$ generated by $G=\{g_1,\ldots,g_s\}$. Then $G$ is a Gr\"obner basis for $I$ if and only if every element of a homogeneous basis for the syzygies $S(\mathcal{G})$ reduces to zero modulo $G$.
\end{Thm}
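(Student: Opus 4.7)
The proof will establish both implications, with the reverse direction being the substantive one.

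\textbf{Forward direction.} Suppose $G$ is a Gr\"obner basis for $I$. Let $(h_1,\ldots,h_s)$ be any homogeneous syzygy on the leading terms of $\mathcal{G}$ belonging to the basis, so $\sum_i h_i\,\lt(g_i)=0$ with each $h_i$ a single term $c_ix^{\alpha_i}$ and the monomials $x^{\alpha_i}\lm(g_i)$ all equal to a fixed $x^{\delta}$. Then $f:=\sum_i h_i g_i$ lies in $I$ and, by construction, $\lm(f) < x^{\delta}$. Since $G$ is a Gr\"obner basis, the standard division algorithm gives a representation $f=\sum_i a_i g_i$ with $\lm(a_ig_i)\le\lm(f)<x^{\delta}$ whenever $a_ig_i\neq 0$; this is precisely $f\to_G 0$, which means the syzygy reduces to zero modulo $G$.

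\textbf{Reverse direction.} Assume every element of a homogeneous basis $\{\sigma^{(\alpha)}\}$ of $S(\mathcal{G})$ reduces to zero modulo $G$. We must show $L(I)=\langle\lm(g_1),\ldots,\lm(g_s)\rangle$. Fix $f\in I$ and, among all representations $f=\sum_i h_ig_i$, choose one minimizing $\delta:=\max\{\lm(h_ig_i):h_ig_i\neq 0\}$ with respect to the monomial order. The goal is to show $\lm(f)=\delta$: then $\lm(f)$ coincides with $\lm(h_ig_i)$ for some $i$, so $\lm(g_i)\mid\lm(f)$, as desired.

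\textbf{Descent step (the main obstacle).} Suppose for contradiction $\lm(f)<\delta$. Let $T=\{i:\lm(h_ig_i)=\delta\}$; then cancellation among the leading terms forces $\sum_{i\in T}\lt(h_i)\lt(g_i)=0$, which means the tuple $\sigma$ defined by $\sigma_i=\lt(h_i)$ for $i\in T$ and $\sigma_i=0$ otherwise is a homogeneous syzygy on the leading terms of multidegree $\delta$. Using Proposition~\ref{prop:S_polynomials_form_basis} applied to the hypothesised homogeneous basis, I would write $\sigma=\sum_\alpha p_\alpha\sigma^{(\alpha)}$ where each $p_\alpha$ is a monomial chosen so that $p_\alpha\sigma^{(\alpha)}$ is homogeneous of multidegree $\delta$. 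By hypothesis each basis element reduces to zero: $\sum_i \sigma^{(\alpha)}_i g_i=\sum_i b^{(\alpha)}_i g_i$ with $\lm(b^{(\alpha)}_i g_i)<\text{multideg}(\sigma^{(\alpha)})$ whenever the term is nonzero. Multiplying by $p_\alpha$ and summing,
\[ \sum_{i\in T}\lt(h_i)g_i \;=\; \sum_i\Bigl(\sum_\alpha p_\alpha b^{(\alpha)}_i\Bigr)g_i, \]
with every summand on the right having leading monomial strictly below $\delta$. Substituting this back into $f=\sum_{i\in T}\lt(h_i)g_i+\bigl(\sum_{i\in T}(h_i-\lt(h_i))g_i+\sum_{i\notin T}h_ig_i\bigr)$ produces a new representation of $f$ in which every term has leading monomial strictly below $\delta$, contradicting minimality of $\delta$.

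\textbf{Conclusion and anticipated difficulty.} The well-foundedness of the monomial order is what makes the minimization legitimate; after that, the argument is essentially bookkeeping. The delicate point I expect to require care is the expression $\sigma=\sum_\alpha p_\alpha\sigma^{(\alpha)}$: one needs to verify that a homogeneous syzygy of fixed multidegree can be expanded in the basis using only terms whose products with the corresponding basis elements are again homogeneous of the same multidegree (so that the bound $\lm(b^{(\alpha)}_ig_i)<\text{multideg}(p_\alpha\sigma^{(\alpha)})=\delta$ can be invoked uniformly). This is where the adjective ``homogeneous'' in the hypothesis is essential; once that technicality is settled, the descent kills the strict inequality $\lm(f)<\delta$ and the proof is complete.
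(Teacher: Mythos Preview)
The paper does not supply its own proof of this theorem; it is quoted verbatim from Cox--Little--O'Shea as a background result in the appendix, so there is nothing in the paper to compare your argument against. Your proof is the standard one from that reference and is correct in outline: the forward direction is immediate from the division algorithm, and the reverse direction is the minimal-$\delta$ descent you describe. The technical point you flag---that a homogeneous syzygy of multidegree $\delta$ can be expanded in the homogeneous basis using only monomial multipliers that restore multidegree $\delta$---is exactly the content of the lemma preceding this theorem in Cox--Little--O'Shea (every syzygy decomposes uniquely into homogeneous components), and once that is in hand your bookkeeping goes through as written.
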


\begin{Prop}[Proposition 10 p.107 in
  \cite{Cox_Little_Oshea}]\label{prop:reduction_of_syzygy_basis} Let $\mathcal{G}=(g_1,\ldots,g_s)$ be an $s$-tuple of polynomials. Suppose that we have a subset $S \subset \{S_{i,j} \,;\, 1\leq i <j \leq s\}$ that is a basis of $S(\mathcal{G})$. Moreover, suppose that we have distinct elements $g_i,g_j,g_k$ such that $\lm(g_k)$ divides the least common multiple of $\lm(g_i)$ and $\lm(g_j)$. If $S_{i,k},S_{j,k}\in S$, then $S\setminus \{S_{i,j}\}$ is also a basis of $S(\mathcal{G})$. Here we put $S_{i,j} := S_{j,i}$ if $i> j$. 
\end{Prop}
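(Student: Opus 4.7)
The plan is to show that $S_{i,j}$ is redundant by exhibiting it as an explicit polynomial combination of $S_{i,k}$ and $S_{j,k}$, both of which lie in $S\setminus\{S_{i,j}\}$. Since $S$ is already a basis of $S(\mathcal{G})$, removing a generator that is expressible in terms of the remaining elements still leaves a generating set.

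First I would set up notation. Write $x^{\gamma_{ab}}$ for the least common multiple of $\lm(g_a)$ and $\lm(g_b)$, so that
\[ S_{a,b} = \frac{x^{\gamma_{ab}}}{\lt(g_a)}\, e_a - \frac{x^{\gamma_{ab}}}{\lt(g_b)}\, e_b. \]
The hypothesis that $\lm(g_k)$ divides $x^{\gamma_{ij}}$ has two immediate consequences: $x^{\gamma_{ij}}$ is a common multiple of $\lm(g_i)$ and $\lm(g_k)$, so $x^{\gamma_{ik}} \mid x^{\gamma_{ij}}$, and symmetrically $x^{\gamma_{jk}} \mid x^{\gamma_{ij}}$. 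Thus the ratios $x^{\gamma_{ij}}/x^{\gamma_{ik}}$ and $x^{\gamma_{ij}}/x^{\gamma_{jk}}$ are honest monomials in $k[x_1,\ldots,x_n]$.

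Next I would verify the key identity
\[ S_{i,j} \;=\; \frac{x^{\gamma_{ij}}}{x^{\gamma_{ik}}}\, S_{i,k} \;-\; \frac{x^{\gamma_{ij}}}{x^{\gamma_{jk}}}\, S_{j,k}. \]
Substituting the definitions on the right-hand side produces
\[ \frac{x^{\gamma_{ij}}}{\lt(g_i)}\, e_i - \frac{x^{\gamma_{ij}}}{\lt(g_k)}\, e_k - \frac{x^{\gamma_{ij}}}{\lt(g_j)}\, e_j + \frac{x^{\gamma_{ij}}}{\lt(g_k)}\, e_k = \frac{x^{\gamma_{ij}}}{\lt(g_i)}\, e_i - \frac{x^{\gamma_{ij}}}{\lt(g_j)}\, e_j, \]
which is exactly $S_{i,j}$; the $e_k$ contributions cancel, which is the whole point of passing through the same ``mediator'' $g_k$.

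Finally, since $i,j,k$ are distinct, both $S_{i,k}$ and $S_{j,k}$ are different from $S_{i,j}$, and by hypothesis they lie in $S$. Therefore $S_{i,j}$ belongs to the $k[x_1,\ldots,x_n]$-module generated by $S\setminus\{S_{i,j}\}$, and so $S\setminus\{S_{i,j}\}$ generates the same submodule of $(k[x_1,\ldots,x_n])^s$ as $S$ did, namely $S(\mathcal{G})$. There is no serious obstacle here; the only subtlety is book-keeping the divisibility of the various lcm's, which the hypothesis $\lm(g_k)\mid x^{\gamma_{ij}}$ handles in one stroke.
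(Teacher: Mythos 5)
Your proof is correct, and it is precisely the standard argument from Cox, Little, O'Shea (the paper itself cites Proposition 10, p.~107 of that book without reproducing the proof). The key identity $S_{i,j} = \tfrac{x^{\gamma_{ij}}}{x^{\gamma_{ik}}}\,S_{i,k} - \tfrac{x^{\gamma_{ij}}}{x^{\gamma_{jk}}}\,S_{j,k}$, together with the observation that the hypothesis $\lm(g_k)\mid x^{\gamma_{ij}}$ forces $x^{\gamma_{ik}}\mid x^{\gamma_{ij}}$ and $x^{\gamma_{jk}}\mid x^{\gamma_{ij}}$, is exactly what the cited source uses.
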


\bibliographystyle{alpha}
\bibliography{./ArcsAndRogersRamanujanIdentities_bib}

\end{document}